\newtheorem{hypo}{Hypothesis}
\newtheorem{prop}[hypo]{Proposition}
\newtheorem{thm}[hypo]{Theorem}
\newtheorem{lem}[hypo]{Lemma}
\newtheorem{coro}[hypo]{Corollary}
\def\A{\mathcal{A}}
\def\B{\mathcal{B}}
\def\C{\mathcal{C}}
\def\D{\mathcal{D}}
\def\N{\mathcal{N}}
\def\L{\mathcal{L}}
\def\M{\mathcal{M}}
\def\O{\mathcal{O}}
\def\E{\mathcal{E}}
\def\F{\mathcal{F}}
\def\V{\mathcal{V}}
\def\X{\mathcal{X}}
\def\PP{\mathbb{P}}
\def\RR{\mathbb{R}}
\def\ZZ{\mathbb{Z}}
\def\DD{\mathbb{D}}
\def\EE{\mathbb{E}}
\newcommand {\pare}[1] {\left( {#1} \right)}
\newcommand {\cro}[1] {\left[ {#1} \right]}
\def \ind {{\bf 1}}
\newcommand {\refeq}[1] {(\ref{#1})}
\newcommand {\va}[1] {\left| {#1} \right|}
\newcommand {\acc}[1] {\left\{ {#1} \right\}}
\newcommand {\floor}[1] {\left\lfloor {#1} \right\rfloor}
\newcommand {\nor}[1] { \left\| {#1} \right\|}
\newcommand {\bra}[1] {\left\langle {#1} \right\rangle}
\title[On the local time of random processes in random scenery]
       {On the local time of random processes in random scenery}
\author{Fabienne Castell} 
\address{LATP, UMR CNRS 6632. Centre de Math\'ematiques et Informatique.
Universit\'e Aix-Marseille I. 39, rue Joliot Curie. 13 453 Marseille Cedex
13. France.}
\email{Fabienne.Castell@cmi.univ-mrs.fr}
\author{Nadine Guillotin-Plantard} 
\address{Institut Camille Jordan, CNRS UMR 5208, Universit\'e de Lyon, Universit\'e Lyon 1, 43, Boulevard du 11 novembre 1918, 69622 Villeurbanne, France.}
\email{nadine.guillotin@univ-lyon1.fr}
\author{Fran\c{c}oise P\`ene}
\address{Universit\'e Europ\'eenne de Bretagne, Universit\'e de Brest,
Laboratoire de Math\'ematiques, UMR CNRS 6205, 29238 Brest cedex, France}
\email{francoise.pene@univ-brest.fr}
\author{Bruno Schapira}
\address{D\'epartement de Math\'ematiques, CNRS UMR 8628, B\^at. 425, Universit\'e Paris-Sud 11, F-91405 Orsay, cedex, France. }
\email{bruno.schapira@math.u-psud.fr}
\subjclass[2000]{60F05; 60F17; 60G15; 60G18; 60K37}
\keywords{Random walk in random scenery; local limit theorem; local time; level sets\\
This research was supported by the french ANR project MEMEMO2}
\begin{document}

\begin{abstract} 
Random walks in random scenery are processes defined by  
$Z_n:=\sum_{k=1}^n\xi_{X_1+...+X_k}$, where basically $(X_k,k\ge 1)$ and $(\xi_y,y\in\mathbb Z)$
are two independent sequences of i.i.d. random variables. 
We assume here that $X_1$ is $\ZZ$-valued, centered and with finite moments of all orders. 
We also assume that $\xi_0$ is $\ZZ$-valued, centered and square integrable.  
In this case H. Kesten and F. Spitzer proved that 
$(n^{-3/4}Z_{[nt]},t\ge 0)$ converges in distribution as $n\to \infty$ toward some self-similar process $(\Delta_t,t\ge 0)$ called Brownian motion in random scenery. 
In a previous paper, we established that 
${\mathbb P}(Z_n=0)$ behaves asymptotically like a constant times $n^{-3/4}$,  
as $n\to \infty$. 
We extend here this local limit theorem: we give a precise asymptotic result for the probability for $Z$ to 
return to zero simultaneously at several times. 
As a byproduct of our computations, we show that $\Delta$ admits a bi-continuous version of its local time process which is  
locally H\"older continuous of order $1/4-\delta$ and $1/6-\delta$, respectively in the time and space variables, for any $\delta>0$. 
In particular, this gives a new proof of the fact, previously obtained by Khoshnevisan, 
that the level sets of $\Delta$ have Hausdorff dimension a.s. 
equal to $1/4$. We also get the convergence of every moment
of the normalized local time of $Z$ toward its continuous counterpart. 
\end{abstract}
\maketitle

\section{Introduction} 

\subsection{Description of the model and of some earlier results}
We consider two independent sequences $(X_k,k\ge 1)$ and $(\xi_y,y\in\mathbb Z)$
of independent identically distributed $\mathbb Z$-valued random variables. 
We assume in this paper that $X_1$ is centered, with finite moments of all orders,  
and that its support generates $\mathbb Z$.
We consider the {\it random walk} $(S_n, n \geq 0)$
defined by 
$$S_0:=0\quad \mbox{and}\quad S_n:=\sum_{i=1}^{n}X_i\quad  \textrm{for all } n\ge 1.$$
We suppose that $\xi_0$ is centered, with finite second moment $\sigma^2:=\EE[\xi_0^2]$. 
The sequence $\xi$ is called the {\it random scenery}.

\noindent The {\it random walk in random scenery} $Z$ is 
then defined for all $n\ge 1$ by
$$Z_n:=\sum_{k=0}^{n-1}\xi_{S_k}.$$ 
For motivation in studying this process and in particular for a description of its connections with many other models, we refer to \cite{BFFN, KestenSpitzer,LeDoussal} and references therein.  
Denoting by  
$N_n(y)$ the local time of the random walk $S$~:
$$N_n(y)=\#\{k=0,\dots,n-1\ :\ S_k=y\} \, ,
$$
it is straightforward, and important, to see that 
$Z_n$ can be rewritten as $Z_n=\sum_y\xi_yN_n(y)$.

\noindent Kesten and Spitzer \cite{KestenSpitzer} and Borodin \cite{Borodin}  proved 
the following functional limit theorem~:  
\begin{equation}
\label{eq1.08}
\left(n^{-3/4} Z_{nt},t\ge 0 \right)\quad  \mathop{\Longrightarrow}
_{n\rightarrow\infty}^{(\mathcal{L})} \quad  \left(\sigma\Delta_t,t\ge 0\right),
\end{equation} 
where
\begin{itemize} 
\item $Z_s:=Z_n+(s-n)(Z_{n+1}-Z_n)$, for all $n\leq s\leq n+1$, 
\item $\Delta$ is defined by
$$\Delta_t:=\int_{-\infty}^{+\infty} L_t(x)\, {\rm d}\beta_x\, ,$$
with $(\beta_x)_{x\in\mathbb R}$ a standard Brownian motion and 
$(L_t(x), \, t\ge 0,\ x\in\mathbb R)$ a jointly continuous in $t$ and $x$ version 
of the local time process of some other standard Brownian motion
$(B_t)_{t\ge 0}$ independent of $\beta$. 
\end{itemize} 
The process $\Delta$ is known to be a  
continuous $(3/4)$-self-similar 
process with stationary increments, and is called {\it Brownian motion in random scenery}. 
It can be seen as a mixture of stable processes, but it is not a stable process.

\noindent Let now $\varphi_\xi$ denote the characteristic function of $\xi_0$ and let $d$ be such 
that $\{u\, :\, |\varphi_\xi(u)|=1\} = (2\pi/d)\ZZ$.
In \cite{BFFN} we established the following local limit theorem~:
\begin{eqnarray}
\label{TLL1} 
\PP \pare{
Z_n=\floor{n^{\frac 34}x} } = \left\{ \begin{array}{ll} 
 d\sigma^{-1}\, p_{1,1}(x/\sigma)\, n^{-\frac 34}+o(n^{-\frac 34}) & \textrm{if } 
   {\mathbb P}\left(n\xi_0-
   \floor{n^{\frac 34}x}\in d\ZZ\right)=1 \\
0 & \textrm{otherwise},
\end{array} 
\right.
\end{eqnarray} 
with 
$$p_{1,1}(x):=\frac 1{\sqrt {2\pi}}\EE\left[||L_1||_2^{-1}e^{- 
      ||L_1||^2_2x^2/2}\right],$$
and 
$||L_1||_2:= \left(\int_\RR L_1^2(y)\, dy\right)^{1/2}$ the $L^2$-norm of $L_1$. 
In the particular case when $x=0$, we get
\begin{eqnarray}
\label{TLL1b} 
\PP \pare{
Z_n=0} = \left\{ \begin{array}{ll} 
d\sigma^{-1}\, p_{1,1}(0)\, n^{-\frac 34}+o(n^{-\frac 34}) & \textrm{if } 
   n\in d_0{\mathbb Z} \\
0 & \textrm{otherwise},
\end{array} 
\right.
\end{eqnarray} 
with $d_0:=\min\{m\ge 1\, :\, \varphi_\xi(2\pi/d)^m=1\}$.  
\footnote{Recall that, for every $n\ge 0$, we have
$${\mathbb P}(n\xi_0\in d{\mathbb Z})>0\ \ \Longleftrightarrow\ \  
   {\mathbb P}(n\xi_0\in d{\mathbb Z})=1 \ \ 
   \Longleftrightarrow\ \  n\in d_0{\mathbb Z}.$$}

\noindent Actually the results mentioned above were proved in the more general 
case when the distributions of the $\xi_y$'s and $X_k$'s are only supposed to be in the basin of 
attraction of stable laws (see \cite{FFN}, \cite{BFFN} and \cite{KestenSpitzer} for details).

\subsection{Statement of the results} 
\label{subsecintroresults}
\subsubsection{Local time of Brownian motion in random scenery} 
Let $T_1,\dots,T_k$, be $k$ positive reals. 
Set
$${\mathcal D}_{T_1,\dots,T_k}:=\det( M_{T_1,\dots,T_k})\ \ \mbox{with}\ \
    M_{T_1,\dots,T_k}=\left(\langle L_{T_i},L_{T_j}\rangle\right)_{1\le i,j\le k},$$
where $\langle \cdot,\cdot \rangle$ denotes the usual scalar product on $L^2(\RR)$, and 
$${\mathcal C}_{T_1,\dots,T_k}:= \EE\left[{\mathcal D}_{T_1,\dots,T_k}^{-1/2}\right].$$
Our first result is the following
\begin{thm}
\label{theoMk}
For any $k\ge 1$, there exist constants $c>0$ and $C>0$,  such that 
$$c\, \prod_{i=1}^k(T_i-T_{i-1})^{-3/4}\, \le \, \C_{T_1,\dots,T_k}\, \le \, C\, \prod_{i=1}^k(T_i-T_{i-1})^{-3/4},$$
for every $0<T_1<\dots<T_k$, with the convention that $T_0=0$. 
\end{thm}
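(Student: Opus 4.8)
The plan is to exploit the joint density of the vector of local times at several fixed times together with the scaling relation between the quantities $\C_{T_1,\dots,T_k}$ and $\C_{1,\dots,1}$ on rescaled intervals. First I would reduce the estimate to the increments. Using that $B$ has independent increments and that $L_{T_i}(x)-L_{T_{i-1}}(x)$ is the local time accumulated on the time interval $[T_{i-1},T_i]$ (which, as a function of $x$, is the local time of a Brownian motion shifted to start at $B_{T_{i-1}}$), one can compare the Gram matrix $M_{T_1,\dots,T_k}$ built from the $L_{T_i}$'s with the block structure coming from the increments $\ell_i:=L_{T_i}-L_{T_{i-1}}$. The determinant $\D_{T_1,\dots,T_k}$ is, up to the usual multilinearity of the determinant in its rows/columns, controlled above and below by $\prod_{i=1}^k\|\ell_i\|_2^2$ times factors that are bounded because of Cauchy--Schwarz; here the key observation is that the supports of the $\ell_i$ overlap, but the Gram determinant of any family is at most the product of the squared norms (Hadamard's inequality) and at least a positive constant times the same product once one controls the "angles" — and the angles are controlled in distribution because each $\ell_i$ is localised near $B_{T_{i-1}}$ on a spatial scale $\sqrt{T_i-T_{i-1}}$, while the process wanders on a larger scale. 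This step I expect to be the main obstacle: getting a genuine \emph{lower} bound on $\D_{T_1,\dots,T_k}$ of the right order, rather than just Hadamard's upper bound, requires a quantitative non-degeneracy argument for the Gram matrix of the increment local times.

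For the upper bound on $\C_{T_1,\dots,T_k}$, I would use Hadamard's inequality $\D_{T_1,\dots,T_k}\le \prod_i\|L_{T_i}\|_2^2$ is too crude; instead, expanding the determinant along the increment basis gives $\D_{T_1,\dots,T_k}\le \prod_{i=1}^k\|\ell_i\|_2^2$ (the Gram determinant is unchanged under the unimodular change of basis $L_{T_i}=\sum_{j\le i}\ell_j$, and then Hadamard applies to the $\ell_i$'s). Hence $\C_{T_1,\dots,T_k}\le\EE[\prod_i\|\ell_i\|_2^{-1}]$, and by the independence of Brownian increments together with Brownian scaling $\|\ell_i\|_2 \stackrel{d}{=}(T_i-T_{i-1})^{3/4}\|L_1\|_2$ (using that local time scales like $\sqrt{t}$ in amplitude and is supported on a $\sqrt t$-neighbourhood, so its $L^2$ norm scales like $t^{3/4}$), one factors the expectation and obtains the claimed $\prod_i(T_i-T_{i-1})^{-3/4}$ with $C=\EE[\|L_1\|_2^{-1}]^k$, provided $\EE[\|L_1\|_2^{-1}]<\infty$, which is known (and is exactly $\sqrt{2\pi}\,p_{1,1}(0)$).

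For the lower bound, I would condition on the endpoints $B_{T_0},\dots,B_{T_{k-1}}$ and show that, on the event that consecutive endpoints are sufficiently separated (say $|B_{T_i}-B_{T_{i-1}}|\ge A\sqrt{T_i-T_{i-1}}$ for a large constant $A$), the increment local times $\ell_i$ have nearly disjoint effective supports, so the Gram matrix $M$ is within a bounded factor of the diagonal matrix $\mathrm{diag}(\|\ell_i\|_2^2)$, whence $\D_{T_1,\dots,T_k}\ge c\prod_i\|\ell_i\|_2^2$ on that event. Since that event has probability bounded below uniformly in the $T_i$'s (again by scaling, it only depends on a fixed Gaussian configuration), restricting the expectation $\EE[\D^{-1/2}]$ to it and using $\|\ell_i\|_2\stackrel{d}{=}(T_i-T_{i-1})^{3/4}\|L_1\|_2$ together with $\EE[\|L_1\|_2^{-1}\mathbf 1_{\{\text{good}\}}]>0$ yields the lower bound $c\prod_i(T_i-T_{i-1})^{-3/4}$. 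The only delicate points are: (i) justifying the "nearly disjoint support" claim quantitatively, for which I would use that outside $O(\sqrt{T_i-T_{i-1}})$ of the starting point the local time $\ell_i$ is small with overwhelming probability (a standard reflection/Gaussian tail estimate); and (ii) checking the scaling identities carefully, since the $\ell_i$'s are not independent of the $B_{T_{i-1}}$'s — but after conditioning, each $\ell_i$ is the local time of an independent Brownian bridge-type process, and Brownian scaling applies conditionally.
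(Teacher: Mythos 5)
Your proposal has the two directions of the theorem interchanged, and the genuinely hard direction is not addressed. Since $\C_{T_1,\dots,T_k}=\EE[\D_{T_1,\dots,T_k}^{-1/2}]$, the Gram--Hadamard bound $\D_{T_1,\dots,T_k}\le\prod_i\|\ell_i\|_2^2$ (valid for the increments $\ell_i=L_{T_i}-L_{T_{i-1}}$ after the unimodular change of basis, exactly as you say) gives $\D_{T_1,\dots,T_k}^{-1/2}\ge\prod_i\|\ell_i\|_2^{-1}$, hence the \emph{lower} bound $\C_{T_1,\dots,T_k}\ge \EE\left[\prod_i\|\ell_i\|_2^{-1}\right]=(\EE[\|L_1\|_2^{-1}])^k\prod_i(T_i-T_{i-1})^{-3/4}$, by independence and scaling of the Brownian increments. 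This is precisely the paper's (easy) lower-bound argument; your displayed claim ``$\C_{T_1,\dots,T_k}\le\EE[\prod_i\|\ell_i\|_2^{-1}]$'' has the inequality reversed and is not a route to the upper bound.

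The upper bound on $\C_{T_1,\dots,T_k}$ is the hard part, and it requires controlling how often $\D_{T_1,\dots,T_k}$ is abnormally \emph{small}, i.e.\ how often the normalized increment local times $t_i^{-3/4}\ell_i$ are nearly linearly dependent. Your plan for the other direction --- exhibit a good event (well-separated endpoints) of probability bounded below, on which the Gram matrix is comparable to its diagonal --- cannot do this: an estimate valid only on an event of positive probability says nothing about $\EE[\D_{T_1,\dots,T_k}^{-1/2}]$, whose dangerous contribution comes from the complementary event, where $\D_{T_1,\dots,T_k}$ may be arbitrarily small; a priori one does not even know that $\D_{T_1,\dots,T_k}^{-1/2}$ is integrable without a tail estimate. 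What is actually needed (and what the paper proves in Proposition \ref{proplambda}) is a uniform-in-$(t_1,\dots,t_k)$ anti-concentration bound $\PP(\overline{\lambda}_{t_1,\dots,t_k}\le\varepsilon)\le C\varepsilon^K$ for the smallest eigenvalue of the normalized Gram matrix, with $K>k/2$, so that $\EE[\overline{\lambda}_{t_1,\dots,t_k}^{-k/2}]$ is finite uniformly. The paper obtains this by the variational formula for the smallest eigenvalue, passing from the $L^2$- to the $L^\infty$-norm via H\"older continuity of local times, and then invoking the second Ray--Knight theorem to reduce the problem to showing that a squared Bessel process of dimension $0$, sampled at finitely many points, is unlikely to lie within $\varepsilon$ of any fixed affine subspace of dimension $k-1$. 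Nothing in your proposal plays the role of this quantitative near-degeneracy estimate, so the upper bound remains unproved.
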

The most difficult (and interesting) part here is the upper bound. 
The lower bound is obtained directly by 
using the scaling property of the local time of Brownian motion and the well-known Gram-Hadamard inequality. 
Concerning the upper bound, we will give more details about its proof in a moment, 
but let us stress already that even for $k=2$ the result is not immediate (whereas when $k=1$ it follows relatively easily 
from the Cauchy-Schwarz inequality and some basic properties of the Brownian motion, see for instance \cite{BFFN}).

\noindent A first corollary of this result is the following: 
\begin{coro}\label{densite}
For all $k\ge 1$ and all $0<T_1<\dots<T_k$, the random variable $(\Delta_{T_1},\dots,\Delta_{T_k})$ admits a continuous
density function, denoted by $p_{k,T_1,\dots,T_k}$, 
which is given by
$$p_{k,T_1,\dots,T_k}(x):=(2\pi)^{-\frac k2}\, 
    {\mathbb E}\left[ {\mathcal D}^{-1/2}_{T_1,\dots,T_k}\exp\left(
        -\frac 12\langle M_{T_1,\dots,T_k}^{-1}x,x\rangle\right)\right]\quad \textrm{for all }x\in\RR^k.$$
\end{coro}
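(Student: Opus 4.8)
The plan is to derive the density of $(\Delta_{T_1},\dots,\Delta_{T_k})$ by conditioning on the Brownian motion $B$ and using the fact that, conditionally on $B$, the vector $(\Delta_{T_1},\dots,\Delta_{T_k})$ is a centered Gaussian vector with an explicitly computable covariance matrix.

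\begin{proof}
Fix $k\ge 1$ and $0<T_1<\dots<T_k$. Recall that $\Delta_t=\int_\RR L_t(x)\,{\rm d}\beta_x$, where $(L_t(x))$ is the local time of a Brownian motion $B$ independent of the Brownian motion $\beta$. We work conditionally on the $\sigma$-field $\G$ generated by $B$ (equivalently by the whole family $(L_t(x),\, t\ge 0,\, x\in\RR)$). Conditionally on $\G$, the map $t\mapsto \Delta_t$ is a Wiener integral of a deterministic function against $\beta$, so the vector $(\Delta_{T_1},\dots,\Delta_{T_k})$ is a centered Gaussian vector whose covariance matrix is
$$
\cov(\Delta_{T_i},\Delta_{T_j}\mid\G)=\int_\RR L_{T_i}(x)L_{T_j}(x)\,{\rm d}x=\langle L_{T_i},L_{T_j}\rangle,
$$
that is, exactly $M_{T_1,\dots,T_k}$. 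By Theorem~\ref{theoMk} (more precisely, by $\C_{T_1,\dots,T_k}=\EE[\D_{T_1,\dots,T_k}^{-1/2}]<\infty$), the determinant $\D_{T_1,\dots,T_k}=\det M_{T_1,\dots,T_k}$ is a.s. strictly positive, so $M_{T_1,\dots,T_k}$ is a.s. invertible. Hence, conditionally on $\G$, the vector $(\Delta_{T_1},\dots,\Delta_{T_k})$ has the non-degenerate Gaussian density
$$
x\longmapsto (2\pi)^{-k/2}\,\D_{T_1,\dots,T_k}^{-1/2}\,\exp\!\left(-\tfrac12\langle M_{T_1,\dots,T_k}^{-1}x,x\rangle\right).
$$

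It remains to take expectations and to justify that the resulting function is a bona fide continuous density. For any bounded continuous $f:\RR^k\to\RR$, conditioning on $\G$ and then using the tower property gives
$$
\EE\big[f(\Delta_{T_1},\dots,\Delta_{T_k})\big]=\EE\!\left[\int_{\RR^k} f(x)\,(2\pi)^{-k/2}\,\D_{T_1,\dots,T_k}^{-1/2}\,e^{-\frac12\langle M^{-1}x,x\rangle}\,{\rm d}x\right],
$$
and by Fubini (the integrand is nonnegative when $f\ge 0$, and the total mass is $\EE[\D^{-1/2}]\cdot(\text{const})<\infty$ by Theorem~\ref{theoMk}, which legitimizes the interchange) this equals $\int_{\RR^k} f(x)\,p_{k,T_1,\dots,T_k}(x)\,{\rm d}x$ with $p_{k,T_1,\dots,T_k}$ as in the statement. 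This identifies $p_{k,T_1,\dots,T_k}$ as the density of $(\Delta_{T_1},\dots,\Delta_{T_k})$. Finally, continuity of $p_{k,T_1,\dots,T_k}$ follows from dominated convergence: for every $x$, the integrand $\D^{-1/2}e^{-\frac12\langle M^{-1}x,x\rangle}$ is continuous in $x$ and bounded above by the integrable random variable $\D^{-1/2}$, uniformly in $x$, so $x\mapsto p_{k,T_1,\dots,T_k}(x)$ is continuous (indeed bounded, by $p_{k,T_1,\dots,T_k}(x)\le (2\pi)^{-k/2}\C_{T_1,\dots,T_k}$).

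The only genuine point requiring care, and the main obstacle, is the a.s. invertibility of $M_{T_1,\dots,T_k}$ together with the integrability of $\D_{T_1,\dots,T_k}^{-1/2}$; both are exactly what Theorem~\ref{theoMk} provides, so the corollary follows from it without further work. The remaining steps (Gaussian conditioning, Fubini, dominated convergence) are routine.
\end{proof}
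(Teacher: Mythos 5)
Your proof is correct, but it follows a genuinely different route from the paper's. The paper starts from the same conditional-Gaussian observation, but packages it as the characteristic function $\psi_{T_1,\dots,T_k}(\theta)=\EE\left[\exp\left(-\frac 12\langle M_{T_1,\dots,T_k}\theta,\theta\rangle\right)\right]$, shows via Theorem \ref{theoMk} that $\int_{\RR^k}\psi_{T_1,\dots,T_k}(\theta)\,d\theta=(2\pi)^{k/2}\,\C_{T_1,\dots,T_k}<\infty$, and then invokes the Fourier inversion theorem (Billingsley, Theorem 26.2) to get existence, continuity and the formula for the density in one stroke, the formula coming from a Gaussian integral in $\theta$. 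You instead disintegrate directly: conditionally on $B$ the vector has the nondegenerate Gaussian density $(2\pi)^{-k/2}\D_{T_1,\dots,T_k}^{-1/2}e^{-\frac12\langle M_{T_1,\dots,T_k}^{-1}x,x\rangle}$ (a.s., since $\EE[\D_{T_1,\dots,T_k}^{-1/2}]<\infty$ forces $\D_{T_1,\dots,T_k}>0$ a.s.), Tonelli identifies its expectation as the density, and dominated convergence with the integrable majorant $\D_{T_1,\dots,T_k}^{-1/2}$ gives continuity together with the bound $p_{k,T_1,\dots,T_k}\le (2\pi)^{-k/2}\C_{T_1,\dots,T_k}$. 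Both arguments rest on exactly the same two consequences of Theorem \ref{theoMk} (a.s. invertibility of $M_{T_1,\dots,T_k}$ and integrability of $\D_{T_1,\dots,T_k}^{-1/2}$); yours is a bit more self-contained since it cites no inversion theorem, while the paper's Fourier formulation matches the machinery reused for the local limit theorem in Section \ref{secTLL}. One cosmetic remark: your justification of the interchange (``the total mass is $\EE[\D_{T_1,\dots,T_k}^{-1/2}]$ times a constant'') is off --- for each realization of $B$ the conditional density integrates to $1$ in $x$, and in any case Tonelli for nonnegative integrands requires no finiteness hypothesis; the finiteness of $\C_{T_1,\dots,T_k}$ is really needed only for the a.s. nondegeneracy and for the continuity step, where you do use it correctly.
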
 
\noindent Theorem \ref{theoMk} also shows that, for every $t\ge 0$, $k\ge 1$
and $x\in{\mathbb R}$,
\begin{eqnarray}
 \label{Mk} 
\M_{k,t}(x):=  \int_{[0,t]^k} p_{k,T_1,\dots,T_k}(x,\dots,x) \, dT_1\dots dT_k,
\end{eqnarray} 
is finite. Define now the {\it level sets} of $\Delta$ as the sets of the form  
$$\Delta^{-1}(x):=
\{t\ge 0\ :\ \Delta_t=x\},$$ 
for $x\in \RR$. We can then state our main application of Theorem \ref{theoMk}, which can be deduced by standard techniques: 
\begin{thm} 
\label{thmLT}
There exists a nonnegative process $(\L_t(x),x\in \RR, t\ge 0)$, such that 
\begin{itemize} 
\item[(i)] a.s. the map $(t,x) \mapsto \L_t(x)$ is continuous and nondecreasing in $t$. Moreover for any $\delta>0$, it is locally H\"older continuous of order 
$1/4-\delta$, in the first variable, and of order $1/6-\delta$, in the second variable, 
\item[(ii)] a.s. for any measurable $\varphi : \RR \to \RR_+$, and any $t\ge 0$, 
$$\int_0^t \varphi(\Delta_s)\, ds = \int_{\RR} \varphi(x)\, \L_t(x)\, dx,$$
\item[(iii)] for any $T>0$, we have the scaling property: 
$$(\L_{t\, T}(x),t\ge 0,x\in \RR) \quad \mathop{=}^{(d)} \quad (T^{1/4}\, \L_t( x \, T^{-3/4}),t\ge 0,x\in \RR).$$ 
\item[(iv)] for any $x\in \RR$, $k\ge 1$, and $t>0$, the $k$-th moment of $\L_t(x)$ is finite and  
\begin{eqnarray}
 \label{momentLtz}
{\mathbb E}\left[\mathcal L_t(x)^k\right]=\M_{k,t}(x),
\end{eqnarray}
\item[(v)] a.s. for any $x\in \RR$, the support of the measure $d_t\L_t(x)$ 
is contained in $\Delta^{-1}(x)$. 
\end{itemize}
The random variable $\L_t(x)$ is called the local time of $\Delta$ in $x$ at time $t$. 
\end{thm}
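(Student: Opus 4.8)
The plan is to construct $\L_t(x)$ as a limit of smoothed occupation densities and to control the moments using Theorem~\ref{theoMk}, then upgrade to joint H\"older continuity via a Kolmogorov-type criterion. More precisely, for $\varepsilon>0$ let $\rho_\varepsilon$ be a mollifier (e.g. the Gaussian kernel $\rho_\varepsilon(y)=(2\pi\varepsilon)^{-1/2}e^{-y^2/2\varepsilon}$) and set $\L_t^\varepsilon(x):=\int_0^t\rho_\varepsilon(\Delta_s-x)\,ds$. By Corollary~\ref{densite} and Fubini, the $k$-th moment $\EE[(\L_t^\varepsilon(x)-\L_t^\varepsilon(x'))^k]$ and similar increments in $t$ can be written as $k$-fold time integrals of differences of the densities $p_{k,T_1,\dots,T_k}$, convolved against $\rho_\varepsilon$; the finiteness uniformly in $\varepsilon$ comes from the bound $p_{k,T_1,\dots,T_k}(x,\dots,x)\le (2\pi)^{-k/2}\C_{T_1,\dots,T_k}$ together with Theorem~\ref{theoMk}, which makes $\M_{k,t}(x)$ finite because $\int_{[0,t]^k}\prod_i(T_{(i)}-T_{(i-1)})^{-3/4}\,dT<\infty$ (the exponent $3/4<1$ is integrable near each coincidence). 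This shows $(\L_t^\varepsilon(x))_\varepsilon$ is Cauchy in every $L^k$, defining a limit $\L_t(x)$ satisfying \refeq{momentLtz} and the occupation-density identity (ii) for fixed $(t,x)$, then for all $(t,x)$ simultaneously by the continuity obtained below together with a monotone-class argument for $\varphi$.

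Next I would obtain the quantitative increment bounds needed for continuity. For the space variable, writing $p_{k,\vec T}(x,\dots,x)-p_{k,\vec T}(x',\dots,x')$ and using the smoothness of the Gaussian-type integrand in Corollary~\ref{densite}, one gets, after integrating over $\vec T\in[0,t]^k$ and using Theorem~\ref{theoMk} to dominate the $\vec T$-integral, an estimate of the form $\EE[|\L_t(x)-\L_t(x')|^k]\le C_{k,t}\,|x-x'|^{\alpha_k}$; optimizing the relation between $\alpha_k$ and $k$ (the available smoothness gives roughly one power of $|x-x'|$ per pair of time variables, tempered by the $3/4$-singularity) yields H\"older exponents approaching $1/6$. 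For the time variable one uses $\L_t(x)-\L_s(x)=\int_s^t\rho_\varepsilon(\Delta_u-x)\,du$ in the limit, expands the $k$-th moment as an integral over $[s,t]^k$, and bounds it by $C_{k}\,|t-s|^{k}\sup p_{k,\dots}$; combined with Theorem~\ref{theoMk} this gives $\EE[|\L_t(x)-\L_s(x)|^k]\le C_{k}\,|t-s|^{k/4}$ up to logarithmic corrections, hence H\"older exponent approaching $1/4$. Feeding the joint increment bound $\EE[|\L_t(x)-\L_s(x')|^k]\le C(|t-s|^{k/4}+|x-x'|^{\alpha_k})$ into the multiparameter Kolmogorov--Chentsov continuity theorem produces a bi-continuous modification with the claimed local H\"older exponents $1/4-\delta$ and $1/6-\delta$; monotonicity in $t$ is inherited from the approximations $\L_t^\varepsilon$.

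Finally, properties (iii) and (v) are soft consequences. The scaling (iii) follows from the $3/4$-self-similarity of $\Delta$: since $(\Delta_{tT})_t\stackrel{(d)}{=}(T^{3/4}\Delta_t)_t$, the occupation measure identity (ii) forces $\L_{tT}(x)\stackrel{(d)}{=}T^{1/4}\L_t(xT^{-3/4})$ as processes (change of variables $y\mapsto yT^{-3/4}$ in $\int\varphi(x)\L\,dx$, matching the Jacobian $T^{3/4}$ against the time rescaling factor $T$). For (v), fix $x$; for any $u<v$ with $\Delta_s\neq x$ for all $s\in[u,v]$ (such intervals form an open set), choose $\varphi$ supported away from $x$ and equal to $1$ near the compact range $\Delta([u,v])$ to deduce $\L_v(x)-\L_u(x)=\int_u^v\varphi(\Delta_s)\,ds-\int_u^v\varphi(\Delta_s)\,ds=0$ after also using a $\varphi$ vanishing on that range; hence $d_t\L_t(x)$ charges no such interval, so its support lies in $\Delta^{-1}(x)$, and the exceptional null set can be removed using continuity in $x$.

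The main obstacle I anticipate is the space-variable estimate: extracting the exponent close to $1/6$ requires carefully balancing the number of time variables one differentiates against the non-integrable-looking but in fact integrable $3/4$-singularities of $\C_{T_1,\dots,T_k}$ along the diagonals, and checking that the constants in Theorem~\ref{theoMk} propagate through the differentiation of the density in Corollary~\ref{densite} without blowing up. Everything else (the $L^k$-Cauchy argument, Kolmogorov's criterion, scaling, support) is standard once that estimate is in place.
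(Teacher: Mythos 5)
Your overall scheme (Gaussian mollification $\L^\varepsilon_t(x)=\int_0^t\rho_\varepsilon(\Delta_s-x)\,ds$, moment bounds on increments, Kolmogorov's criterion, then (iii)--(v) as soft consequences) is exactly the paper's, which follows Le Gall's treatment of Brownian local time; your handling of the time increments and of (iii), (iv), (v) is essentially the paper's argument. The genuine gap is the space-variable (and mollification-parameter) estimate, which you flag as the anticipated obstacle but do not carry out, and for which the tools you invoke are not sufficient. First, $\EE\bigl[(\L^\varepsilon_t(x)-\L^\varepsilon_t(x'))^{k}\bigr]$ is not a time integral of the simple differences $p_{k,T_1,\dots,T_k}(x,\dots,x)-p_{k,T_1,\dots,T_k}(x',\dots,x')$: expanding the $k$-th power gives the full $k$-fold mixed difference of $p_{k,T_1,\dots,T_k}$ over the $2^k$ points with coordinates in $\{x,x'\}$, so to reach a bound of the form $|x-x'|^{\gamma k}$ (a power proportional to $k$, which is what Kolmogorov requires to yield an exponent near $\gamma$) you need fractional smoothness of the density in each coordinate separately. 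Second, differentiating the density of Corollary \ref{densite} in a coordinate brings down factors of the type $\langle M_{T_1,\dots,T_k}^{-1}x,e_j\rangle$, so the resulting estimates involve expectations like $\EE\bigl[\D_{T_1,\dots,T_k}^{-1/2}\,\|M_{T_1,\dots,T_k}^{-1}\|^{j/2}\bigr]$; Theorem \ref{theoMk} controls only $\EE[\D^{-1/2}]=\C_{T_1,\dots,T_k}$ and says nothing about such quantities, so "Theorem \ref{theoMk} propagating through the differentiation" is precisely what is not available.

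What the paper uses instead is Proposition \ref{proplambda}: all negative moments of the smallest eigenvalue $\overline{\lambda}_{t_1,\dots,t_{2p}}$ of the normalized Gram matrix $\overline{M}_{t_1,\dots,t_{2p}}$ are bounded uniformly in the $t_i$'s. It bypasses the density entirely and works with the characteristic function, where the spatial increment factorizes automatically as $\prod_j|e^{-ix\xi_j}-e^{-ix'\xi_j}|\le c\,|x-x'|^{2\gamma p}\prod_j|\xi_j|^{\gamma}$; after the change of variables $\eta_j\mapsto t_j^{-3/4}\eta_j$ the cost is the factor $\prod_j t_j^{-3(1+2\gamma)/4}$, integrable exactly when $\gamma<1/6$ (this is where $1/6$ really comes from), while the Gaussian integral in $\eta$ is controlled by the uniform negative moments of $\overline{\lambda}$. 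The same mechanism, with $\gamma<1/12$, is what makes the $\varepsilon,\varepsilon'$ increments small, which you need anyway to justify your $L^k$-Cauchy claim. So your exponent bookkeeping points in the right direction, but without Proposition \ref{proplambda} (or an equivalent eigenvalue estimate, or the Fourier-side argument) the central estimate of the proof is missing, and it cannot be extracted from Theorem \ref{theoMk} and Corollary \ref{densite} alone.
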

We believe that the exponents $1/4$ and $1/6$ in Part (i) are sharp. One reason is that our proof gives the right critical exponents in the case of 
the Brownian motion. Another heuristic reason comes from a result proved by Dombry \& Guillotin \cite{DombryGuillotin},  
saying that the sum of $n$ i.i.d copies of the process  $\Delta$ converges under appropriate normalization,
towards a fractional Brownian motion with index $3/4$. But the H\"older continuity critical exponents of the local time of 
the latter process are exactly equal to $1/4$ in the time variable, and $1/6$ in the space variable.

Let us point out that an original feature of this theorem is that it gives strong regularity properties of the local time of a process which is neither Markovian nor Gaussian, 
whereas usually similar results are obtained when at least one of these conditions is satisfied (see for instance \cite{GH,MR}).

We should notice now that previously only the existence of a process satisfying (ii), \eqref{momentLtz} for $k\le 2$ and (v) was known, see \cite{YX}.
The original motivation of \cite{YX} 
was in fact to study the Hausdorff dimension of the level sets of $\Delta$. 
Khoshnevisan and Lewis conjectured in \cite{KL2} that their Hausdorff dimension was a.s. equal to $1/4$, for every $x\in \RR$.   
In \cite{YX} Xiao proved the result for almost every $x$, and left open the question to know 
whether this was true for every $x$.  
This has been later proved by Khoshnevisan in \cite{K}. 
With Theorem \ref{thmLT} we can now give an alternative proof, which follows the same lines as standard ones in the case of the Brownian motion ~: 
\begin{coro}[Khoshnevisan \cite{K}] 
\label{cor2}
For every $x\in \RR$, the Hausdorff dimension of $\Delta^{-1}(x)$ is a.s. equal to $1/4$. 
\end{coro}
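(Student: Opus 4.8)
\noindent The plan is to establish the two inequalities $\dim_H\Delta^{-1}(x)\le 1/4$ and $\dim_H\Delta^{-1}(x)\ge 1/4$ separately, for each fixed $x\in\RR$, following the scheme that is classical for the zero set of linear Brownian motion; the substantive inputs will be Theorems \ref{theoMk} and \ref{thmLT}. It suffices to argue on a fixed compact interval $[0,T]$ and then let $T\uparrow\infty$ along a countable sequence.

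\emph{Lower bound.} I would work on the event $\{\L_T(x)>0\}$. There the (random) finite measure $\nu:=d_t\L_t(x)$ on $[0,T]$ is nonzero, has total mass $\L_T(x)$, and by part (v) of Theorem \ref{thmLT} is carried by $\Delta^{-1}(x)$. By part (i), for each $\delta>0$ the map $t\mapsto\L_t(x)$ is locally H\"older of order $1/4-\delta$ on $[0,T]$, so there is a finite random constant $C_\delta$ with $\nu([a,b])=\L_b(x)-\L_a(x)\le C_\delta\,(b-a)^{1/4-\delta}$ for all $0\le a\le b\le T$. The mass distribution principle then gives $\mathcal H^{1/4-\delta}(\Delta^{-1}(x)\cap[0,T])\ge\L_T(x)/C_\delta>0$, hence $\dim_H\Delta^{-1}(x)\ge 1/4-\delta$; letting $\delta\downarrow 0$ yields $\dim_H\Delta^{-1}(x)\ge 1/4$ on $\bigcup_{T>0}\{\L_T(x)>0\}$.

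\emph{Upper bound.} Partition $[0,T]$ into the $N$ intervals $I_j:=[(j-1)T/N,\,jT/N]$ and let $\mathcal N_N$ count those $j$ for which $\Delta$ takes the value $x$ somewhere on $I_j$; these $\mathcal N_N$ intervals cover $\Delta^{-1}(x)\cap[0,T]$. Since $\Delta$ is continuous, on the event that $\Delta$ hits $x$ on $I_j$ one has $|\Delta_{(j-1)T/N}-x|\le 2\sup_{t\in I_j}|\Delta_t-\Delta_{(j-1)T/N}|$, so for $j\ge 2$ and any $r>0$,
$$\PP\bigl(\exists\,t\in I_j:\ \Delta_t=x\bigr)\ \le\ \PP\bigl(|\Delta_{(j-1)T/N}-x|\le 2r\bigr)\ +\ \PP\Bigl(\sup_{t\in I_j}|\Delta_t-\Delta_{(j-1)T/N}|>r\Bigr).$$
By Corollary \ref{densite} and Theorem \ref{theoMk} with $k=1$, the law of $\Delta_s$ has a density bounded by $(2\pi)^{-1/2}\C_s\le C\,s^{-3/4}$, so the first term is at most $C\,r\,((j-1)T/N)^{-3/4}$; by stationarity of the increments and $(3/4)$-self-similarity the second term equals $\PP\bigl(\sup_{[0,1]}|\Delta_u|>r\,(N/T)^{3/4}\bigr)$, which is at most $C_p\,r^{-p}(T/N)^{3p/4}$ for every $p>0$ because $\sup_{[0,1]}|\Delta_u|$ has moments of all orders (from the moments of $\Delta_1$ and Kolmogorov's continuity criterion). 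Choosing $r=(T/N)^\theta$ with $\theta\in(0,3/4)$ and summing over $j$ with $\sum_{i=1}^{N-1}i^{-3/4}\le C\,N^{1/4}$ gives $\EE[\mathcal N_N]\le C_T\,N^{1-\theta}+C_{T,p}\,N^{1-p(3/4-\theta)}$, whence $\EE[\mathcal H^s_{T/N}(\Delta^{-1}(x)\cap[0,T])]\le (T/N)^s\,\EE[\mathcal N_N]$. For fixed $s>1/4$, taking $\theta\in(\max(0,1-s),3/4)$ and $p$ large makes this bound decay like a negative power of $N$; a Borel--Cantelli argument along $N=2^m$ then shows $\mathcal H^s(\Delta^{-1}(x)\cap[0,T])=0$ a.s., and letting $s\downarrow 1/4$ and $T\uparrow\infty$ gives $\dim_H\Delta^{-1}(x)\le 1/4$ a.s.

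\emph{Putting things together.} It remains to check that $\bigcup_{T>0}\{\L_T(x)>0\}$ has probability one, equivalently that $\L_T(x)>0$ for all large $T$ almost surely (in particular $\Delta^{-1}(x)\ne\emptyset$ a.s.). This, together with the two-sided control of $\mathcal N_N$ in the covering step, is the only genuinely delicate part. Using \eqref{momentLtz} and Theorem \ref{theoMk} one computes $\EE[\L_T(x)]\ge c\,T^{1/4}$ for $T$ large while $\EE[\L_T(x)^2]\le C\,T^{1/2}$, so that $\EE[\L_T(x)^2]\le C'\,\EE[\L_T(x)]^2$ uniformly in $T$; the Paley--Zygmund inequality then gives $\PP(\L_T(x)>0)\ge c''>0$ for all large $T$, and a zero--one argument (exploiting the scaling property (iii) and the continuity of $x\mapsto\L_1(x)$, or the ergodicity of Brownian scaling) upgrades this to probability one. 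Combining this with the two bounds above shows that $\dim_H\Delta^{-1}(x)=1/4$ almost surely, for every $x\in\RR$.
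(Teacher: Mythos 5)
Your lower bound is exactly the argument the paper has in mind: the paper's two-line proof says ``Frostman's Lemma together with Theorem \ref{thmLT}'', and your mass-distribution computation with $\nu=d_t\L_t(x)$, using (i) for the H\"older bound $\nu([a,b])\le C_\delta (b-a)^{1/4-\delta}$ and (v) for the support, is that argument written out. Your upper bound is also fine, and there you are in fact more self-contained than the paper, which simply quotes \cite{YX} and \cite{K} for this ``easy'' half: the covering estimate via the one-dimensional density bound $p_{1,s}(y)\le C s^{-3/4}$ (Corollary \ref{densite} plus Theorem \ref{theoMk} with $k=1$), stationarity of increments, self-similarity, and the all-order moments of $\sup_{[0,1]}|\Delta|$ is correct, and the exponent bookkeeping ($\theta\in(\max(0,1-s),3/4)$, $p$ large) works.

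The one genuine gap is the last step, which you yourself flag as delicate but then only assert: that $\PP\bigl(\bigcup_{T>0}\{\L_T(x)>0\}\bigr)=1$. Your Paley--Zygmund bounds are right ($\EE[\L_T(x)]\ge cT^{1/4}$ and $\EE[\L_T(x)^2]\le CT^{1/2}$ follow from \eqref{momentLtz} and Theorem \ref{theoMk}), but they only give $\PP(\L_T(x)>0)\ge c''>0$, hence $\PP(\bigcup_T\{\L_T(x)>0\})\ge c''$, and ``a zero--one argument upgrades this to probability one'' is not an off-the-shelf step here: $\Delta$ is not Markovian, so no Blumenthal-type law is available, and for $x\neq 0$ the event $\{\exists T:\ \L_T(x)>0\}$ is \emph{not} invariant under scaling, since by (iii) scaling changes the level ($\L_{tT}(x)$ has the law of $T^{1/4}\L_t(xT^{-3/4})$); so ``the ergodicity of Brownian scaling'' does not apply to it directly. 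The ingredients you name do suffice, but they must be assembled: first reduce to $x=0$ by writing $\PP(\L_T(x)>0)=\PP(\L_1(xT^{-3/4})>0)$ and using the a.s. continuity of $y\mapsto \L_1(y)$ together with Fatou to get $\liminf_{T\to\infty}\PP(\L_T(x)>0)\ge \PP(\L_1(0)>0)$; then observe that $\{\exists T:\ \L_T(0)>0\}$ \emph{is} invariant under the joint scaling $(B_t,\beta_y)\mapsto (a^{-1/2}B_{at},\,a^{-1/4}\beta_{a^{1/2}y})$, which preserves the law of $(B,\beta)$ and is ergodic (via Lamperti's transformation each factor is the shift flow of a stationary Ornstein--Uhlenbeck process, hence mixing, and a product of mixing flows is ergodic), so its probability is $0$ or $1$ and Paley--Zygmund excludes $0$; finally $\PP(\L_T(0)>0)$ is independent of $T$ by (iii), giving $\PP(\L_1(0)>0)=1$ and then $\PP(\bigcup_T\{\L_T(x)>0\})=1$ for every $x$. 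Until something of this sort is written out, the a.s. statement (in particular the a.s. non-emptiness of $\Delta^{-1}(x)$) is not established; to be fair, the paper's own proof is silent on this point as well, deferring to the standard Brownian scheme of \cite{LG}.
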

Actually Xiao and Khoshnevisan proved their result in the more general setting where the Brownian motion $B$ is replaced by a stable process of index 
$\alpha \in (1,2]$. But at the moment it does not seem straightforward for us to adapt our proof to this case.

Now let us give some rough ideas of the proof of Theorem \ref{theoMk}. The first thing we use is that $M_{T_1,\dots,T_k}$ 
is a Gram matrix, and so there is nice formula for its smallest eigenvalue \eqref{ppvp}, which shows that to get a lower bound, 
it suffices to prove that the term $L_{T_1}/T_1^{3/4}$ is far in $L^2$-norm 
from the vector space generated by the terms $(L_{T_j}-L_{T_{j-1}})/(T_j-T_{j-1})^{3/4}$, for $j\ge 2$. Now by scaling we can always assume that $T_1=1$. 
Next by using the H\"older regularity of the process $L$, 
we can replace the $L^2$-norm by the $L^\infty$-norm, which is much easier to control.  
Then we use the Ray-Knight theorem, which says that, if instead of considering 
the term $L_1$ we consider $L_\tau$, with $\tau$ some appropriate random time, then we get a Markov process. It is then possible to prove 
that with high probability, this process is far in $L^\infty$-norm from any finite dimensional affine space, from which the desired result follows.

\subsubsection{Random walk in random scenery} 
Our first result is a multidimensional extension of our previous local limit theorem. 
We state it only for return probabilities to $0$, to simplify notation, but 
it works exactly the same is we replace $0$ by $[n^{3/4}x]$, for some fixed $x\neq 0$. 
\begin{thm}\label{thmTLL}
Let $k\ge 1$ be some integer and let $0<T_1<\dots<T_k$, be $k$ fixed positive reals. Then for any $n\ge 1$, 
\begin{itemize}
\item If $[nT_i]\in d_0\ZZ$, for all $i\le k$, then 
\begin{eqnarray*}
{\mathbb P} \pare{Z_{[nT_1]}= \dots=Z_{[nT_k]}=0} =
\left(d \sigma^{-1}\right)^k\, p_{k,T_1,\dots, T_k}(0,\dots,0)\ n^{-3k/4}+o(n^{-3k/4}). 
\end{eqnarray*}  
\item 
Otherwise $\PP\left(Z_{[nT_1]}= \dots=Z_{[nT_k]}=0\right) =0$.
\end{itemize}
\noindent Moreover, for every $k\ge 1$ and every $\theta\in(0,1)$, there exists $C=C(k,\theta)>0$, such that 
$$\PP\left[Z_{n_1}=\dots =
   Z_{n_1+\dots+n_k}=0\right]\le C\, (n_1\dots n_k)^{-3/4},$$
for all $n\ge 1$ and all $n_1,\dots,n_k\in [n^{\theta},n]$.
\end{thm}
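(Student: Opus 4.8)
The plan is to run the standard Fourier-inversion scheme for local limit theorems, working conditionally on the random walk $S$. Set $m_j:=[nT_j]$ for the first part, and $m_j:=n_1+\dots+n_j$ for the last assertion. Since the $\xi_y$ are $\ZZ$-valued, i.i.d.\ and independent of $S$, the conditional characteristic function of $(Z_{m_1},\dots,Z_{m_k})$ given $S$ factorizes over sites, and Fourier inversion on $\ZZ^k$ gives
$$\PP(Z_{m_1}=\dots=Z_{m_k}=0)=\frac{1}{(2\pi)^k}\int_{[-\pi,\pi]^k}\EE\Big[\prod_y\varphi_\xi\Big(\textstyle\sum_{j=1}^kt_jN_{m_j}(y)\Big)\Big]\,dt.$$
All the arithmetic attached to the span of $\xi_0$ (the constants $d$, $d_0$, the restriction of the integral to the relevant sublattice, and the dichotomy in the statement) is handled exactly as in the one-time case treated in \cite{BFFN}; below I concentrate on the analytic core, i.e.\ on the regime where $m_j\in d_0\ZZ$ for all $j$.

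First I would isolate the central region $|t_j|\le A\,n^{-3/4}$. There one performs the change of variables $t_j=u_j\,n^{-3/4}$ and uses $\varphi_\xi(w)=1-\tfrac{\sigma^2}{2}w^2+o(w^2)$ together with the fact that $n^{-3/4}\sum_ju_jN_{m_j}(y)$ is uniformly small (of size $n^{-1/4}$) on the range of $S$, so that the product collapses to $\exp\big(-\tfrac{\sigma^2}{2}\,n^{-3/2}\sum_{i,j}u_iu_j\langle N_{m_i},N_{m_j}\rangle+o(1)\big)$. By the invariance principle for the local times of $S$ (see e.g.\ \cite{Borodin}), the matrix $n^{-3/2}(\langle N_{[nT_i]},N_{[nT_j]}\rangle)_{i,j}$ converges in law to $M_{T_1,\dots,T_k}$, so the integrand converges to $\exp(-\tfrac{\sigma^2}{2}\langle M_{T_1,\dots,T_k}u,u\rangle)$; Gaussian integration in $u$ then produces $(2\pi)^{k/2}\sigma^{-k}\EE[{\mathcal D}_{T_1,\dots,T_k}^{-1/2}]$, which after collecting the Fourier and Jacobian constants (including the lattice factor, treated as in \cite{BFFN}) is precisely $(d\sigma^{-1})^kp_{k,T_1,\dots,T_k}(0,\dots,0)\,n^{-3k/4}$. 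The delicate point is the uniform integrability needed to justify passing to the limit under the integral sign: this is exactly where Theorem \ref{theoMk} enters, more precisely its discrete counterpart $\EE[(n^{-3/2}\det(\langle N_{m_i},N_{m_j}\rangle)_{i,j})^{-1/2}]\le C\prod_i(T_i-T_{i-1})^{-3/4}$, which one obtains either from Theorem \ref{theoMk} via the invariance principle or by transplanting its proof (eigenvalue formula for Gram matrices, Hölder regularity, Ray--Knight/Markov structure) to the random walk.

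It then remains to show that the complement of the central region contributes $o(n^{-3k/4})$, which I would split in the usual way. On the intermediate annulus $A\,n^{-3/4}\le|t|\le\varepsilon$ one keeps the bound $|\varphi_\xi(w)|\le e^{-cw^2}$ valid near $0$ and shows, again via the discrete form of Theorem \ref{theoMk} (now used as an upper bound on a Laplace-type integral at the fluctuating scale), that this range is a genuine lower-order tail. On the remaining set, where $t$ stays at distance $\ge\varepsilon$ from $(2\pi/d)\ZZ^k$, one combines the inequality $|\varphi_\xi(w)|\le\rho<1$ for $w$ away from $(2\pi/d)\ZZ$ with a non-degeneracy estimate for the local times: with overwhelming probability the number of sites $y$ at which $\sum_jt_jN_{m_j}(y)$ lies at distance $\ge\varepsilon'$ from $(2\pi/d)\ZZ$ is of order $\sqrt n$, which forces the inner expectation to be exponentially small. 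This last estimate is the main obstacle. For the precise asymptotics it is a (not entirely routine) extension to linear combinations $\sum_jt_jN_{m_j}(y)$ of the one-time spreading estimate of \cite{BFFN}; for the final assertion of the theorem it has to hold uniformly over all $n_1,\dots,n_k\in[n^\theta,n]$, and it is precisely the hypothesis $n_i\ge n^\theta$ that guarantees that each increment $N_{m_i}-N_{m_{i-1}}$ carries enough local time for the bound to be uniform (while $n_i\le n$ merely fixes the common scale).

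Granting all this, the uniform upper bound follows by rerunning the same three-region estimate with $t_j$ rescaled by $n_j^{-3/4}$ instead of $n^{-3/4}$: the Jacobian contributes $\prod_in_i^{-3/4}$, and Gaussian integration over the central region contributes $\EE[(\det(n_i^{-3/4}n_j^{-3/4}\langle N_{m_i},N_{m_j}\rangle)_{i,j})^{-1/2}]$, which is bounded by a constant depending only on $k$ since, by the discrete form of Theorem \ref{theoMk}, $\EE[(\det(\langle N_{m_i},N_{m_j}\rangle)_{i,j})^{-1/2}]\le C\prod_in_i^{-3/4}$; the intermediate and far regions are again negligible by the estimates above. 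Collecting the factors yields $\PP(Z_{n_1}=\dots=Z_{n_1+\dots+n_k}=0)\le C(k,\theta)\,(n_1\cdots n_k)^{-3/4}$, as claimed.
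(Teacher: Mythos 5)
Your overall architecture (Fourier inversion with the $d,d_0$ dichotomy as in \cite{BFFN}, Gaussian approximation in a central region with uniform integrability supplied by a determinant estimate, negligible outer regions) is the same as the paper's, but two of your key steps would fail as stated. First, your treatment of the ``intermediate annulus'' $A\,n^{-3/4}\le|t|\le\varepsilon$ is wrong: there the arguments $w=\sum_j t_jN_{m_j}(y)$ are typically of size up to $\varepsilon\sqrt n$, far outside the neighbourhood of $0$ where a bound of the type \eqref{majorationphi} holds, and they can sit near nonzero multiples of $2\pi/d$, where $|\varphi_\xi|=1$; so the Gaussian domination $|\varphi_\xi(w)|\le e^{-cw^2}$ simply does not apply. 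This resonance regime (say $n^{-1/2+\eta}\lesssim|t_j|\le\varepsilon$) is the hardest part of the theorem; it is not covered by your ``far'' region (those $t$ are at distance $<\varepsilon$ from the lattice point $0$), and your proposed spreading estimate, besides being only asserted, is attached to the wrong region. The paper needs two separate arguments here: on $\{n_j^{-1/2-\eta}\le|\theta_j|\le n_j^{-1/2+\eta}\}$ a counting argument based on the H\"older continuity of the local times (Proposition \ref{sec:step1}), which forces at least $n^{1/4}$ sites with $|w|\in[\varepsilon_0/2,\varepsilon_0]$ whenever some site has $|w|>\varepsilon_0$, and beyond that the peaks argument of \cite{BFFN} (Proposition \ref{proppointes}), giving a bound $o(e^{-n^c})$ uniformly over all remaining $\theta\in[-\pi/d,\pi/d]^k$. (A more minor point: integrating over a fixed box $|u_j|\le A$ does not produce the full Gaussian constant; the paper's central region $|\theta_j|\le n_j^{-1/2-\eta}$ grows after rescaling, which is what makes the Gaussian integral complete.)

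Second, the tool you invoke for uniform integrability and for the uniform bound, namely the ``discrete counterpart'' $\EE\big[\big(n^{-3k/2}\det(\langle N_{m_i},N_{m_j}\rangle)_{i,j}\big)^{-1/2}\big]\le C\prod_i(T_i-T_{i-1})^{-3/4}$, is false: the discrete Gram determinant $D_{n_1,\dots,n_k}$ vanishes with positive (exponentially small) probability, e.g.\ when two local-time profiles coincide, so the unrestricted inverse moment is infinite for every $n$. Nor can a weak invariance principle transfer such a bound from Theorem \ref{theoMk}, since $x\mapsto x^{-1/2}$ is unbounded near the degenerate set. The paper's way around this is a genuine piece of work: Borodin's strong approximation of the walk's local time by the Brownian one (this is where the hypothesis of all finite moments on $X_1$ enters), combined with Proposition \ref{proplambda}, yields Lemma \ref{lem:borne}, i.e.\ inverse-moment bounds restricted to the event $\{n^{-3k/2}D_{n_1,\dots,n_k}\ge n^{-\theta_0}\}$ together with a polynomially small probability for its complement, uniformly over $n^\theta\le n_1,\dots,n_k\le n$; the central-region asymptotics then follow from this truncation plus the joint convergence in law of the Gram matrix (Proposition \ref{thmconvergencejointe}), and the uniform upper bound follows from the same truncated estimates, not from an unrestricted inverse moment.
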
  
As an application we can prove that the moments of the local time of $Z$ converge toward their continuous counterpart. 
More precisely, for $z\in \ZZ$, define the local time of $Z$ in $z$ at time $n$ by: 
$${\mathcal N}_n(z):=\#\{m=1,\dots,n\, :\, Z_m=z\}.$$ 
Then Theorem \ref{thmTLL} together with the Lebesgue dominated convergence theorem give 
\begin{coro}\label{momenttpslocal}
For all $k\ge 1$, 
$${\mathbb E}\left[{\mathcal N}_n(0)^k\right]\, \sim\, 
   \left(\frac{d}{\sigma d_0}\right)^k \M_{k,1}(0) \, n^{k/4},$$
as $n\to \infty$, with $\M_{k,1}(0)$ as in \eqref{Mk}. 
\end{coro}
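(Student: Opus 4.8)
The plan is to derive Corollary \ref{momenttpslocal} from Theorem \ref{thmTLL} by writing the moments of $\mathcal N_n(0)$ as a sum of joint return probabilities and then passing to the limit via a Riemann-sum argument, controlled by the uniform bound in the second part of Theorem \ref{thmTLL}. First, I would expand
$$\EE\left[\mathcal N_n(0)^k\right]=\sum_{m_1=1}^n\cdots\sum_{m_k=1}^n\PP\left(Z_{m_1}=\dots=Z_{m_k}=0\right).$$
Up to the factor $k!$ coming from the number of orderings, the dominant contribution comes from the simplex $1\le m_1<\dots<m_k\le n$; the diagonal terms (where some $m_i$ coincide) contribute a sum over at most $k-1$ free indices and are thus $O(n^{(k-1)/4})$, negligible compared with the claimed $n^{k/4}$. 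So it suffices to understand $k!\sum_{1\le m_1<\dots<m_k\le n}\PP(Z_{m_1}=\dots=Z_{m_k}=0)$.

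Next I would perform the change of variables $m_i=[nT_i]$ (or rather group the indices $m_i$ according to which cell $[nT_i,nT_i+dn)$ of a fine partition of $(0,1]$ they fall in) and recognize the sum as a Riemann sum for $\int_{0<T_1<\dots<T_k<1}p_{k,T_1,\dots,T_k}(0,\dots,0)\,dT_1\cdots dT_k$. By Theorem \ref{thmTLL}, each term with all $[nT_i]\in d_0\ZZ$ equals $(d\sigma^{-1})^k p_{k,T_1,\dots,T_k}(0)\,n^{-3k/4}+o(n^{-3k/4})$, while the terms with some $[nT_i]\notin d_0\ZZ$ vanish; since exactly a proportion $d_0^{-k}$ of the lattice points $(m_1,\dots,m_k)$ satisfy the divisibility condition, and $n^{-3k/4}\cdot n^k$ multiplied by the mesh $n^{-k}$ reproduces the measure $dT_1\cdots dT_k$, the sum behaves like
$$k!\,\left(\frac{d}{\sigma}\right)^k\frac{1}{d_0^k}\,n^{k/4}\int_{0<T_1<\dots<T_k<1}p_{k,T_1,\dots,T_k}(0,\dots,0)\,dT_1\cdots dT_k.$$
By symmetry of the integrand in the $T_i$'s, the $k!$ times the integral over the ordered simplex equals the integral over the full cube $[0,1]^k$, which by definition \eqref{Mk} is exactly $\M_{k,1}(0)$. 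This gives the stated equivalent.

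The main obstacle is justifying the interchange of limit and summation, i.e. turning the pointwise asymptotics of Theorem \ref{thmTLL} into an asymptotic for the whole $k$-fold sum. The pointwise statement only applies for \emph{fixed} reals $T_1<\dots<T_k$, whereas in the sum the ratios $m_i/n$ range over all of $(0,1]$, including indices near $0$ or with $m_{i+1}-m_i$ small, where the error term is not uniform. This is precisely where the uniform upper bound $\PP(Z_{n_1}=\dots=Z_{n_1+\dots+n_k}=0)\le C(n_1\cdots n_k)^{-3/4}$ for $n_i\in[n^\theta,n]$ enters: fixing a small $\varepsilon>0$, the contribution to the sum from tuples where some consecutive gap $m_{i+1}-m_i$ (or $m_1$ itself) is smaller than $\varepsilon n$ is bounded, using this estimate together with $\sum_{1\le j\le \varepsilon n}j^{-3/4}=O((\varepsilon n)^{1/4})$, by $C\varepsilon^{1/4}n^{k/4}$, uniformly in $n$. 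The contribution from tuples where some $m_i<n^\theta$ is even smaller. Hence on the complementary region all gaps are $\Theta(n)$, the error terms in Theorem \ref{thmTLL} are uniform, and the Riemann-sum convergence is legitimate; letting $\varepsilon\to0$ (and using that $p_{k,T_1,\dots,T_k}(0)$ is integrable on $[0,1]^k$, which is part of the content of \eqref{Mk} being finite) finishes the argument. One should also check that the density $p_{k,T_1,\dots,T_k}(0,\dots,0)$, as given by Corollary \ref{densite}, is continuous in $(T_1,\dots,T_k)$ on the open simplex so that the Riemann sums genuinely converge to the integral — this follows from dominated convergence inside the expectation defining $p_{k,T_1,\dots,T_k}$, using Theorem \ref{theoMk} to dominate $\mathcal D_{T_1,\dots,T_k}^{-1/2}$.
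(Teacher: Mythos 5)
Your overall route is the same as the paper's: expand $\EE[\N_n(0)^k]$ into a $k$-fold sum of joint return probabilities, keep only the indices in $d_0\ZZ$ (whence the factor $d_0^{-k}$), feed in the pointwise asymptotics of Theorem \ref{thmTLL} on the bulk of the simplex, use symmetry of $p_{k,T_1,\dots,T_k}(0,\dots,0)$ to pass from the ordered simplex (with the $k!$) to the cube defining $\M_{k,1}(0)$, and use the uniform bound of Theorem \ref{thmTLL} to make the contribution of small times and small gaps negligible. The paper implements the interchange of limit and summation slightly differently: it rewrites the sum exactly as $k!\,n^{3k/4}$ times an integral over the ordered simplex, discards the region $\{u_1\le n^{\theta-1}\}$ with $\theta<1/4$ using the uniform bound, and then applies the Lebesgue dominated convergence theorem, rather than your Riemann-sum-with-$\varepsilon$-cutoff argument.

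Two steps of your write-up are looser than what you actually have available. First, you assert that on the region where all gaps are at least $\varepsilon n$ ``the error terms in Theorem \ref{thmTLL} are uniform''; the theorem is stated for fixed $T_1<\dots<T_k$ and provides no uniformity in the $T_i$'s, so this is not licensed as written. It is also unnecessary: on that region the uniform bound gives the $n$-independent domination $n^{3k/4}\,\PP(Z_{m_1}=\dots=Z_{m_k}=0)\le C\varepsilon^{-3k/4}$, pointwise convergence holds at Lebesgue-a.e.\ point by Theorem \ref{thmTLL}, and dominated convergence (the paper's device) then gives the convergence of your Riemann sums, without any continuity-in-$T$ discussion. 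Second, your bound on tuples with a small gap tacitly applies the estimate $\PP\le C(n_1\cdots n_k)^{-3/4}$ also when some gap is below $n^\theta$, where it is not stated (your remark about ``some $m_i<n^\theta$'' concerns small times, not small gaps). Those tuples need a separate word: drop the constraint at one of the two close times, apply the $(k-1)$-point uniform bound (or the corollary at level $k-1$), and use that the dropped index has at most of order $n^\theta$ admissible values, giving a contribution $O(n^{(k-1)/4+\theta})=o(n^{k/4})$ since $\theta<1/4$. With these two repairs your argument is complete and essentially coincides with the paper's proof.
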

A natural question now is to know if we could not deduce from this corollary 
the convergence in distribution of the normalized local time $\N_n(0)/n^{1/4}$ toward $\L_1(0)$. 
To this end, we should need to know that the law of $\L_1(0)$ is determined by the sequence of its moments. 
Since this random variable is nonnegative, a standard criterion ensuring this, called Carleman's criterion, is the condition:  
$$\sum_k \M_{k,1}(0)^{-\frac 1{2k}} = \infty.$$ 
In particular a bound for $\M_{k,1}(0)$ in $k^{2k}$ would be sufficient. However with our proof, we only get a bound in $k^{ck}$, for some constant $c>0$. 
We can even obtain some explicit value for $c$, but unfortunately it is larger than $2$, so this is not enough to get the convergence in 
distribution. Note that 
this question is directly related to the question of the dependence in $k$ of the constant $C$ in Theorem \ref{theoMk}, which we 
believe is an interesting question for other problems as well, such as the problem of large deviations for the process $\L$ 
(see for instance \cite{CLRS} in
which the case of the fractional Brownian motion is considered).

Another interesting feature of Theorem \ref{thmTLL} is that it gives an effective measure of the asymptotic correlations of the increments of $Z$. 
Indeed, if we assume to simplify that $k=2$, $\sigma=1$ and $d=1$, then \eqref{TLL1} and Theorem \ref{thmTLL} (actually its proof) 
show that 
\begin{eqnarray}
\label{asympcor}
\frac{\PP(Z_{n+m}-Z_{n} = 0 \mid Z_{n}=0)}{\PP(Z_{n+m}-Z_{n} = 0)}\quad  
\longrightarrow \quad \frac {\EE\left[\left\{||L_1||_2^2||\widetilde L_t||_2^2-\langle L_1,\widetilde L_t\rangle^2\right\}^{-1/2}\right]}
{\EE\left[\left\{||L_1||_2||\widetilde L_t||_2\right\}^{-1}\right]},
\end{eqnarray}
as $n\to \infty$ and $m/n\to t$, for some $t>0$, where $L$ and $\widetilde L$ are the local time processes of two independent
standard Brownian motions. 
In particular the limiting value in \eqref{asympcor} is larger than one, 
which means that the process is asymptotically more likely to come back to $0$ at time $n+m$, 
if we already know that it is equal to $0$ at time $n$.

The general scheme of the proof of Theorem \ref{thmTLL} is quite close from the one used for the proof of \eqref{TLL1b} in \cite{BFFN}. 
However, in addition to Theorem \ref{theoMk} which is needed here and which is certainly the main new difficulty, some 
other serious technical problems appear in the multidimensional setting. In particular at some point we use   
a result of Borodin \cite{Bor} giving a strong approximation of the local time of Brownian motion by the random walk local time. 
This also explains why we need stronger hypothesis on the random walk here. 
Now concerning the scenery, it is not clear if we can relax the hypothesis of finite second moment, 
since we strongly use that the characteristic function of $(\Delta_{T_1},...,\Delta_{T_k})$, takes the form 
$$\psi(\theta_1,...,\theta_k)={\mathbb E}\left[e^{-\sum_{i,j=1}^k  a_{i,j}\theta_i\theta_j}\right],$$
with $(a_{i,j})_{i,j}$ some (random) positive symmetric matrix.

Finally let us mention that in the proof of Theorem \ref{thmTLL}, we use the following result, which might be interesting
on its own. It is a natural multidimensional extension of a result of Kesten and Spitzer \cite{KestenSpitzer} on the convergence 
in distribution of the 
normalized self-intersection local time of the random walk.
\begin{prop}\label{thmconvergencejointe}
Let $k\ge 1$ be given and let $T_1<\dots< T_k$, be $k$ positive reals. Then 
$$ \left(n^{-3/2}\langle N_{n_1+\dots+n_i},N_{n_1+\dots+n_j}\rangle\right)_{1\le i,j\le k} \quad 
\mathop{\Longrightarrow}^{(\mathcal{L})} \quad  \left( \langle L_{T_i},L_{T_j}\rangle\right)
_{1\le i,j\le k},$$
as $n\to \infty$, and $(n_1+\dots+n_i)/n\to T_i$, for all $i\ge 1$, and   
where for all $p,q$,  
$$\langle N_{p},N_{q}\rangle:=
    \sum_{y\in\mathbb Z}N_{p}(y)N_{q}(y).$$ 
\end{prop}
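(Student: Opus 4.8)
\noindent The plan is to reduce the statement to an invariance principle for the local time of the random walk, and then to pass to the limit inside an integral. Set $m_i:=n_1+\dots+n_i$, so that $m_i/n\to T_i$ and $m_i\to\infty$, and introduce the rescaled local time field
$$\Lambda_n(s,x):=n^{-1/2}\, N_{\floor{ns}}\big(\floor{\sqrt n\, x}\big),\qquad s\ge 0,\ x\in\RR.$$
Since $\sum_{y\in\ZZ}f(y)=\sqrt n\int_\RR f(\floor{\sqrt n\, x})\,dx$ for any $f$, we have the exact identity
$$n^{-3/2}\bra{N_{m_i},N_{m_j}}=\int_\RR \Lambda_n(m_i/n,x)\,\Lambda_n(m_j/n,x)\,dx,$$
so the proposition amounts to the convergence in distribution, jointly in $(i,j)$, of these integrals toward $\bra{L_{T_i},L_{T_j}}=\int_\RR L_{T_i}(x)L_{T_j}(x)\,dx$. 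Note that the latter is a.s.\ finite, since $L_{T_i}$ has compact support in space and $\int_\RR L_{T_i}(x)\,dx=T_i$.

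First I would invoke the invariance principle for random walk local times --- in the strong form used here this is a theorem of Borodin \cite{Bor} --- which gives that $(\Lambda_n(s,x))_{s\in[0,T],\,x\in\RR}$ converges in distribution, in the space of continuous functions on $[0,T]\times\RR$ equipped with uniform convergence on compacts, toward the local time field $(L_s(x))_{s\ge 0,\,x\in\RR}$ of the limiting Brownian motion. By the Skorokhod representation theorem we may assume everything is defined on a single probability space with this convergence holding almost surely. Then, using $m_i/n\to T_i$ and the joint continuity of $L$, for every fixed $K>0$,
$$\int_{-K}^{K}\Lambda_n(m_i/n,x)\,\Lambda_n(m_j/n,x)\,dx\ \longrightarrow\ \int_{-K}^{K}L_{T_i}(x)L_{T_j}(x)\,dx\qquad\text{a.s., as }n\to\infty.$$

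The remaining point --- and the main obstacle --- is to control the contribution of $\{|x|>K\}$ uniformly in $n$. Since $m\mapsto N_m(y)$ is nondecreasing, $N_{m_i}(y)N_{m_j}(y)\le N_{m_k}(y)^2$ for all $i,j$, where $m_k\asymp n$ is the largest of the $m_i$'s; thus it suffices to show that $n^{-3/2}\,\EE\big[\sum_{|y|>K\sqrt n}N_{m_k}(y)^2\big]\to 0$ when first $n\to\infty$ and then $K\to\infty$. Expanding $\sum_{|y|>K\sqrt n}N_{m_k}(y)^2=\sum_{0\le l,l'<m_k}\ind(S_l=S_{l'})\,\ind(|S_l|>K\sqrt n)$ and using that, for $l\le l'$, the increment $S_{l'}-S_l$ is independent of $S_l$, one gets
$$\EE\Big[\sum_{|y|>K\sqrt n}N_{m_k}(y)^2\Big]\le 2\Big(\sum_{0\le l<m_k}\PP(|S_l|>K\sqrt n)\Big)\Big(\sum_{0\le j<m_k}\PP(S_j=0)\Big)+\sum_{0\le l<m_k}\PP(|S_l|>K\sqrt n).$$
Now $\sum_{j<m_k}\PP(S_j=0)=O(\sqrt n)$ by the local limit theorem, while the finiteness of all moments of $X_1$ gives, via standard moment estimates for sums of i.i.d.\ centered variables and Markov's inequality, $\PP(|S_l|>K\sqrt n)\le C_p\,K^{-2p}(l/n)^p$ for any fixed integer $p$, so that $\sum_{l<m_k}\PP(|S_l|>K\sqrt n)=O(K^{-2p}n)$. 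Altogether the expectation above is $O(K^{-2p}n^{3/2})$, which is exactly what is needed; this is where the moment hypothesis on the walk is used.

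Combining the two steps, $\int_\RR\Lambda_n(m_i/n,x)\Lambda_n(m_j/n,x)\,dx\to\bra{L_{T_i},L_{T_j}}$ in probability for each $(i,j)$, and since all these convergences take place on a common probability space, the matrices converge a.s., hence in distribution, proving the proposition. (One could also try the method of moments, computing the joint moments of the $n^{-3/2}\bra{N_{m_i},N_{m_j}}$ and checking that the limiting law is moment-determined via Carleman's criterion, but this seems to require more work than the argument above.)
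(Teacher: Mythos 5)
Your proof is correct, but it takes a genuinely different route from the paper's. The paper follows Kesten--Spitzer's original scheme: space is cut into slices of width $\tau\sqrt n$, the diagonal terms $n^{-3/2}\sum_y N^{(j)}_{n_j}(y)^2$ are approximated by squared occupation times of the slices (the error term $A(\tau,M,n)$ being controlled by Lemmas 1--3 of Kesten--Spitzer), the off-diagonal terms are treated through Gaussian-regularized mutual intersection local times via the $L^2$-approximation results of Chen (Lemma 5.3.1 and Theorem 2.2.3 of his book), and the conclusion comes from Donsker's invariance principle applied to a continuous functional of the paths (Lemma \ref{pre2}). You instead write each entry exactly as $\int_\RR \Lambda_n(m_i/n,x)\,\Lambda_n(m_j/n,x)\,dx$ and invoke Borodin's strong approximation of random-walk local times by Brownian local time (the same input the paper uses in Section \ref{secTLL}), together with a first-moment truncation bound for the region $|x|>K\sqrt n$; your second-moment expansion of $\sum_{|y|>K\sqrt n}N_{m}(y)^2$, the bound $\sum_{j<m}\PP(S_j=0)=\O(\sqrt n)$ and the Markov-type estimate $\PP(|S_l|>K\sqrt n)\le C_pK^{-2p}(l/n)^p$ are all sound, and already $p=1$ suffices. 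Your route is shorter and yields the joint convergence of all entries for free, since everything is a functional of one local-time field; its price is that it leans on the strong coupling, which requires high moments of $X_1$ --- harmless here, where all moments are assumed finite, but it would not carry over to the general Kesten--Spitzer setting (walk only in the domain of attraction of a stable law), which is precisely the setting the Donsker-plus-$L^2$ argument mimics. Two small points to repair in the write-up: for finite $n$ the field $(s,x)\mapsto\Lambda_n(s,x)$ is a step function, so ``convergence in distribution in the space of continuous functions'' is not quite the right formulation --- either work in a Skorokhod-type space or, more simply, quote Borodin's coupling directly, which gives $\sup_{t\le T,\,x\in\RR}|\Lambda_n(t,x)-L_t(x)|=\O(n^{-1/4}\log n)$ outside events of summable probability, hence the a.s.\ convergence on compacts you need; and at the end the convergence you actually obtain is in probability (the tail estimate is only a bound on expectations, uniform in $n$), not almost sure --- which is of course enough, since convergence in probability of the finitely many entries implies the convergence in law asserted in the proposition.
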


The paper is organized as follows. In Section \ref{preuve-densite}, we give a short proof of Corollary \ref{densite}. In Section \ref{CTfini} we prove Theorem \ref{theoMk}. 
Then in Section \ref{secLT}, we explain how one can deduce Theorem \ref{thmLT} and Corollary \ref{cor2} from it. 
Section \ref{secTLL} is devoted to the proof of Theorem \ref{thmTLL} and Section \ref{sectpsloc} to the proof of Corollary \ref{momenttpslocal}. 
Finally, in Section \ref{preuvecvgcejointe}, 
we give a proof of Proposition \ref{thmconvergencejointe}.

We also mention some notational convention that we shall use: if $X$ is some random variable and $A$ some set, then $\EE[X,\, A]$ will mean $\EE[X{\bf 1}_{A}]$. 
%
%
\section{Proof of corollary \ref{densite}}\label{preuve-densite}
Let $k\ge 1$ be given and let $T_1 < \dots < T_k$, be some positive reals.
The characteristic function $\psi_{T_1,\dots,T_k}$ of 
$(\Delta_{T_1},\dots,\Delta_{T_k})$ (with the convention $T_0=0$)
is given by
\begin{eqnarray*}
 \psi_{T_1,\dots,T_k}(\theta) 
   &=& {\mathbb E}\left[
   \exp\left(-\frac 12\int_{\mathbb R}\left(\sum_{i=1}^k\theta_i L_{T_i}(u)
   \right)^2\, du\right)\right]\\
 &=& {\mathbb E}\left[
   \exp\left(-\frac 12\langle M_{T_1,\dots,T_k} \theta,\theta\rangle\right)\right],
\end{eqnarray*}
with $\theta:=(\theta_1,\dots,\theta_k)$. In particular this function is non-negative. Moreover, a change of variables  
(this change is 
possible since $\mathcal D_{T_1,...,T_k}$ is almost surely non null, thanks to
Theorem \ref{theoMk}) gives 
\begin{eqnarray*} 
\int_{\RR^k} \psi_{T_1,\dots,T_k}(\theta)\, d\theta
 &=& \C_{T_1,\dots,T_k} \,  
    \int_{\RR^k}e^{-\frac 12\sum_{i=1}^ku_i^2}\, du_1\dots du_k\\
 &=& (2\pi)^{\frac k2}\, \C_{T_1,\dots,T_k}<\infty.
\end{eqnarray*}
This implies, see the remark following Theorem 26.2 p.347 in \cite{Bil}, that
$(\Delta_{T_1},\dots,\Delta_{T_k})$ admits a continuous density function 
$p_{k,T_1,\dots,T_k}$, given by
\begin{eqnarray*}
p_{k,T_1,\dots,T_k}(x)
 &=&\frac 1{(2\pi)^{ k}}\int_{{\mathbb R}^k} e^{-i\langle \theta,x\rangle}
     \psi_{T_1,\dots,T_k}(\theta)\, d\theta\\  
&=&(2\pi)^{-\frac k2}\, {\mathbb E}\left[  \D^{-\frac 12}_{T_1,\dots,T_k}\exp\left(
        -\frac 12\langle M_{T_1,\dots,T_k}^{-1}x,x\rangle\right)\right],
\end{eqnarray*}
which was the desired result. \hfill $\square$

\section{Proof of Theorem \ref{theoMk}}

\label{CTfini}
Let $k\ge 1$ and $0<T_1<...<T_k$, be given. Set $t_i:=T_i-T_{i-1}$, for $i\le k$, with the convention that $T_0=0$.
For every $i=1,...,k$, let $(L^{(i)}_t(x):=L_{T_{i-1}+t}(x)-L_{T_{i-1}}(x),
t\in [0,t_i],x\in \RR)$ 
be the local time process of 
$B^{(i)}:=(B_{T_{i-1}+t},t\in[0,t_i])$. 
Set 
$$\widetilde \D_{t_1,\dots,t_k}:=\det(\widetilde M_{t_1,\dots,t_k}) \quad \textrm{with}\quad \widetilde M_{t_1,\dots,t_k}:=\left(\langle L^{(i)}_{t_i},L^{(j)}
_{t_j}\rangle\right)_{1\le i,j\le k},$$
and 
$$\widetilde {\mathcal C}_{t_1,\dots,t_k}:= \EE\left[\widetilde {\mathcal D}_{t_1,\dots,t_k}^{-1/2}\right].$$
Since $\widetilde {\mathcal D}_{t_1,\dots,t_k}={\mathcal D}_{T_1,...,T_k}$, 
Theorem \ref{theoMk} is equivalent to proving the existence of constants $c>0$ and $C>0$, such that
\begin{eqnarray}
 \label{thm1bis}
c\, (t_1\dots t_k)^{-3/4} \, \le \, \widetilde {\mathcal C}_{t_1,\dots,t_k}\, \le C \, (t_1\dots t_k)^{-3/4},
\end{eqnarray} 
for all positive $t_1,\dots,t_k$.

\noindent Let us first notice that $\widetilde {\mathcal D}_{t_1,\dots,t_k}$ is a Gram determinant and is thus nonnegative. 
So $\widetilde {\mathcal C}_{t_1,\dots,t_k}$ is well defined as an extended real number.

\noindent Now we start with the lower bound in \eqref{thm1bis}. We use the well known Gram-Hadamard inequality: 
$$\widetilde \D_{t_1,\dots,t_k} \, \le\,  \prod_{i=1}^k \, ||L_{t_i}^{(i)}||_2^2.$$
By using next the scaling property of Brownian motion, we see that 
$(t_i^{-3/4}||L_{t_i}^{(i)}||_2, i\ge 1)$ is a sequence of i.i.d. random variables 
distributed as $||L_{1}||_2$.  Therefore, 
$$\EE\left[\widetilde \D_{t_1,\dots,t_k}^{-1/2}\right] \, \ge\, c \, (t_1\dots t_k)^{-3/4},$$
with $c:= (\EE[||L_1||_2^{-1}])^k > 0$.  

\noindent We prove now the upper bound in \eqref{thm1bis}, which is the most difficult part. For this purpose,
we introduce the new Gram matrix
\[\overline{M}_{t_1,\dots,t_k} := \pare{\bra{ t_i^{-\frac 34}\, L^{(i)}_{t_i},t_j^{-\frac 34}\, L^{(j)}_{t_j} }}_{i,j}. 
\]
Note that all its eigenvalues are nonnegative and denote by $\overline{\lambda}_{t_1,\dots,t_k}$
the smallest one. We get then
\[ \D_{T_1,\dots,T_k} = \widetilde{\D}_{t_1,\dots,t_k}= \pare{\prod_{i=1}^k t_i^{3/2}} \,  \det(\overline{M}_{t_1,\dots,t_k})
\ge  \pare{\prod_{i=1}^k t_i^{3/2}} \,  \overline{\lambda}^k_{t_1,\dots,t_k} \, .
\]
Thus we can write 
\begin{eqnarray*}
\widetilde \C_{t_1,\dots,t_k} &=& \EE\left[\widetilde {\mathcal D}_{t_1,\dots,t_k}^{-1/2}\right] 
\\
&\le & (t_1\dots t_k)^{-3/4}\, \EE\left[\overline{\lambda}_{t_1,\dots,t_k}^{-k/2}\right] \\
&=& (t_1\dots t_k)^{-3/4}\, \int_0^\infty \PP\left[\overline{\lambda}_{t_1,\dots,t_k}^{-k/2} \ge t\right]\, dt\\
&\le & (t_1\dots t_k)^{-3/4}\, \left\{ 1+ \frac{2}{k}
\int_0^1 \PP[\overline{\lambda}_{t_1,\dots,t_k} \le \varepsilon] \, \frac{d\varepsilon}{\varepsilon^{1+k/2}}\right\}.
\end{eqnarray*}
Therefore Theorem \ref{theoMk} follows from the following proposition: 
\begin{prop} 
\label{proplambda}
For any $k\ge1$ and $K > 0$, there exists a constant $C>0$, such that 
$$\PP(\overline{\lambda}_{t_1,\dots,t_k} \le \varepsilon) \le C\, \varepsilon^K,$$
for all $\varepsilon\in (0,1)$ and all $t_1, \dots , t_k>0$. 
\end{prop}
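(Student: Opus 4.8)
The strategy is to exploit the variational formula for the smallest eigenvalue of a Gram matrix. Writing $e_i := t_i^{-3/4} L^{(i)}_{t_i}$, one has
$$\overline{\lambda}_{t_1,\dots,t_k} = \min_{\|a\|=1} \Big\| \sum_{i=1}^k a_i e_i \Big\|_2^2,$$
so that $\{\overline{\lambda}_{t_1,\dots,t_k} \le \varepsilon\}$ forces the existence of a unit vector $a$ with $\|\sum a_i e_i\|_2$ small. Picking out the index $i_0$ with $|a_{i_0}|$ maximal (hence $|a_{i_0}| \ge 1/\sqrt k$), we see that the function $e_{i_0}$ must be $L^2$-close to the subspace spanned by the other $e_i$'s, i.e. to an affine space of dimension at most $k-1$ (after dividing out $a_{i_0}$). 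Since the processes $B^{(1)}, \dots, B^{(k)}$ are independent, the plan is to condition on all of them except $B^{(i_0)}$: it then suffices to bound, uniformly over all affine subspaces $F$ of dimension $\le k-1$ and all $s>0$,
$$\PP\Big( \operatorname{dist}_{L^2}\big(s^{-3/4} L^{(i_0)}_s,\, F\big) \le c\sqrt{\varepsilon}\Big) \le C \varepsilon^K,$$
and then sum the $k$ contributions. By the scaling property of Brownian local time we may take $s=1$, so we are reduced to a statement purely about $L_1$, the local time of a single standard Brownian motion on $[0,1]$.

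To estimate this single-process probability, I would follow the two reductions sketched by the authors just before the proposition. First, replace the $L^2$-distance by the $L^\infty$-distance: using the a.s. H\"older continuity of $x \mapsto L_1(x)$ (of any order $<1/2$) together with the compactness of the (random) support of $L_1$, an $L^2$-ball of small radius around $F$ is contained, up to an event of small probability, in a slightly larger $L^\infty$-neighborhood of $F$ restricted to a fixed compact interval; controlling distance in sup-norm on a grid of finitely many points is then enough. Second — and this is the heart of the argument — replace the deterministic time $1$ by a suitable random time $\tau$ (a first hitting time or an inverse local time), so that the Ray--Knight theorem applies and $(L_\tau(x))_x$ becomes a Markov process in the space variable (a squared Bessel–type diffusion). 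One must check that passing from $L_1$ to $L_\tau$ costs only a harmless factor; this is where some care is needed, perhaps running the argument on the event that $\tau$ is comparable to $1$. For the Markov process $(L_\tau(x))_x$ one proves the anti-concentration bound: the probability that it stays within $\eta$ in sup-norm of a fixed $(k-1)$-dimensional affine space, over a space-interval long enough to contain $k$ well-separated sample points, is $O(\eta^K)$. This follows by evaluating at $k$ points $x_1 < \dots < x_k$: conditionally on the first $j-1$ values, the $j$-th value has a density that is bounded on bounded sets (the transition density of the Bessel square process), so the chance that it falls within $\eta$ of the one prescribed value dictated by membership in an affine hyperplane is $O(\eta)$; iterating over $j=1,\dots,k$ gives $O(\eta^k)$, and choosing the number of sample points large enough upgrades the exponent from $k$ to any prescribed $K$.

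The main obstacle I anticipate is the random-time substitution: making rigorous the replacement of $L_1$ by a Ray--Knight local time $L_\tau$ while keeping all estimates uniform in the dimension parameter and in the scaling, and simultaneously handling the $L^2 \to L^\infty$ passage (which requires uniform control of the H\"older norm and of the support of $L_1$, again uniformly). The anti-concentration step for the Bessel-square process is conceptually clean but will need the standard but slightly delicate fact that its transition densities are locally bounded away from the degenerate regime; one also has to be a little careful that the relevant affine subspace has dimension $k-1$ rather than $k$, which is exactly what the choice of $i_0$ with $|a_{i_0}|$ maximal provides. Once these pieces are in place, summing over the $k$ choices of $i_0$ and integrating the tail bound against $\varepsilon^{-1-k/2}\,d\varepsilon$ (as in the display preceding the proposition) closes the proof of Theorem \ref{theoMk}.
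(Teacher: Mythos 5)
Your outline reproduces the paper's broad strategy (variational formula for the smallest eigenvalue, extraction of the maximal coefficient, reduction to the distance from one rescaled local time to a $(k-1)$-dimensional affine space, passage from $L^2$ to pointwise estimates, Ray--Knight plus an anti-concentration bound for the squared Bessel process of dimension $0$, with many sample points to boost the exponent), but the two steps you treat as routine are exactly where the real work lies, and as written they fail. First, you cannot ``condition on all pieces except $B^{(i_0)}$'' and then invoke a bound that is uniform over \emph{deterministic} subspaces $F$: for $j>i_0$ the local time $L^{(j)}_{t_j}$ depends on $B^{(i_0)}$ through its starting level $B_{T_{j-1}}$, and after the Ray--Knight substitution the relevant affine space also contains the post-$\tau$ local time of $B^{(i_0)}$ itself, so it is not independent of the process you are estimating; freezing $F$ by conditioning on the endpoint turns $L^{(i_0)}$ into a bridge local time, to which the second Ray--Knight theorem no longer applies. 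The paper keeps the space random and untangles precisely this dependence through the variable $\tau'=\int_0^\tau{\bf 1}_{\{B^{(1)}_s\ge x\}}\,ds$, excursion-theoretic conditional independences, the conditional estimate of Lemma \ref{Bessel} and a density argument; ``running the argument on the event that $\tau$ is comparable to $1$'' does not substitute for this, and it is the bulk of the proof of Lemma \ref{lemeqx2}.

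Second, the reduction from $L^2$-distance to closeness at finitely many points is not available ``uniformly over all affine subspaces $F$'': it requires a modulus of continuity for the whole combination $L^{(1)}_1+\sum_j v_j t_j^{-3/4}L^{(j)}_{t_j}$, and the H\"older constant of the $j$-th term is of order $C_H^{(j)}t_j^{-3/8}$, which blows up as $t_j\to0$, while the constant in the proposition must be uniform in $t_1,\dots,t_k$. The paper handles this by isolating $J=\{j: t_j\le\varepsilon^4\}$, confining those terms to the neighbourhood $\E'_J$ of their supports and sampling only outside it; it also needs Lemma \ref{lemeqx1} to produce $k+1$ well-separated points where $L_1>\varepsilon^{1/4}$, which is what guarantees $\tau<1$ and a non-degenerate starting value for the squared Bessel process--neither device appears in your plan. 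Finally, in the anti-concentration step the relevant one-step density of the squared Bessel process is of order $\varepsilon^{-3/8}$, not bounded, and membership in $V_\varepsilon$ does not pin individual coordinates, so one must first cover $V_\varepsilon$ intersected with the a priori localization box by $O(\varepsilon^{-(k-1)/2})$ balls as in Lemma \ref{Besselsanscond}; your ``each point costs $O(\eta)$'' bookkeeping would not produce the exponent $\varepsilon^K$, although the idea of taking the number of sample points large is the right one.
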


\subsection{Proof of Proposition \ref{proplambda}.} 
Note first that 
\begin{eqnarray}
\label{ppvp}
\overline{\lambda}_{t_1,\dots,t_k} = \inf_{u_1^2 + \dots +u_k^2=1} \nor{u_1 t_1^{-\frac 34}\, L^{(1)}_{t_1} + \dots
+ u_k t_k^{-\frac 34}\, L^{(k)}_{t_k}  }_2^2 
\, .
\end{eqnarray}
Note next that if $u_1^2+ \dots +u_k^2=1$, then $u_{\max } :=\max_i \va{u_i} \ge  1/\sqrt{k}$. Thus dividing all $u_i$ by 
$u_{\max }$ leads to 
\[ \overline{\lambda}_{t_1,\dots,t_k} \ge \frac{1}{k} 
\min_{i=1, \dots, k} \,  \inf_{(v_j)_{j \ne i},\, \va{v_j} \le 1} \nor{t_i^{-\frac 34}\, L^{(i)}_{t_i}  + 
\sum_{j \ne i}  v_j t_j^{-\frac 34}\, L^{(j)}_{t_j} }_2^2
\, .
\]
Hence, it suffices to bound all terms 
\begin{equation}
\label{titj}
\PP \cro{ \inf_{(v_j)_{j \ne i},\, \va{v_j} \le 1} \nor{t_i^{-\frac 34}\, L^{(i)}_{t_i}  + 
\sum_{j \ne i}  v_j t_j^{-\frac 34} \, L^{(j)}_{t_j} }_2^2 \le k \varepsilon}, 
\end{equation}
for $i\le k$. By scaling invariance, and changing $t_j$ by $t_j/t_i$ in \eqref{titj}, one can always assume that $t_i=1$. 
It will also be no loss of generality to assume that 
$i=1$, the case $i>1$ being entirely similar. We are thus led to prove 
that for any $k \ge 1$ and $K > 0$, there exists a constant $C>0$, such that for all $\varepsilon \in (0,1)$, and all $t_j > 0$,  
\begin{equation}
\label{problambdabar}
\PP \cro{ \inf_{(v_j)_{j>1},\, |v_j|\le 1} \nor{L^{(1)}_1 + 
\sum_{j \ge 2}  v_j t_j^{-3/4}\, L^{(j)}_{t_j} }_2^2 \le \varepsilon} \le C\, \varepsilon^K \, .
\end{equation}
We want now to bound from below the $L^2$-norm by (some power of) the $L^{\infty}$-norm using the H\"older regularity of the Brownian local time.
To this end, notice that by scaling the constants  
$$C_H^{(j)}: = \sup_{x \ne y} \frac{\va{L^{(j)}_{t_j}(x) - L^{(j)}_{t_j}(y)}}{t_j^{3/8}\va{x-y}^{1/4}},$$
for $j\ge 1$, are i.i.d. random variables. Moreover, 
the constant of H\"older  continuity of order $1/4$ of the $j$-th term of the sum in \eqref{problambdabar} is 
larger than or equal to $C_H^{(j)} t_j^{-3/8}$. 
Since this can be large, we distinguish between 
indices $j$ such that $t_j$ is small from the other ones. More precisely, we define  $J=\{j :\ t_j\le \varepsilon^4\}$, and 
$$ \E_J : = \cup_{j\in J} \ \textrm{supp}(L_{t_j}^{(j)}),$$
where $\textrm{supp}(f)$ denotes the support of a function $f$. Set also 
$$\E'_J:= \{x\in \RR\ :\ d(x,\E_J) <  \varepsilon\}  \, . $$
To simplify notation, set now for all $x\in \RR$, and $v=(v_j)_{j \ge 2}$, 
$$F_v(x):= L^{(1)}_1(x) + \sum_{j\notin J,\, j \ge 2} v_j t_j^{-3/4} L^{(j)}_{t_j}(x) \quad \textrm{and} \quad G_v(x): = \sum_{j\in J} v_j t_j^{-3/4} L^{(j)}_{t_j}(x). $$
Notice that $G_v =0$ on $\E_J^c$ and that 
\[ \sup_{x \ne y} \frac{\va{F_v(x)-F_v(y)}}{\va{x-y}^{1/4}} \le \varepsilon^{-3/2} \sum_j C_H^{(j)}.
\]
Thus if for some $x\notin \E'_J$, $|F_v(x)|\ge \varepsilon$, and if in the same time $\sum_j C_H^{(j)} \le 1/(2\varepsilon^{1/4})$,
then 
\[ \nor{F_v+G_v}_2^2 \ge \int_{x-\varepsilon^{11}}^{x+\varepsilon^{11}} F_v(y)^2 \, dy 
\ge \int_{x-\varepsilon^{11}}^{x+\varepsilon^{11}} \left(\varepsilon - \varepsilon^{-3/2} \sum_j C_H^{(j)} \varepsilon^{11/4}\right)_+^2 \, dy 
\ge  \frac{1}{2} \varepsilon^{13} \, .
\]
Moreover, it is  known that the $C_H^{(j)}$ have finite moments of any order. 
Therefore it suffices to prove that for any $k\ge 1$ and $K> 0$, there is a constant $C>0$, such that for all $\varepsilon \in (0,1)$, 
and all $t_2, \dots, t_k>0$, 
\begin{eqnarray}
\label{normeinfini}
\PP\left(\inf_{v\in \RR^{k-1}} \, \sup_{x\notin \E'_J} \, |F_v(x)|\le \varepsilon\right) \le C\, \varepsilon^K \, . 
\end{eqnarray}
This will follow from the next two lemmas, that we shall prove in the next subsections: 
\begin{lem}
\label{lemeqx1}
Let $(L_t(x),t\ge 0, x\in \RR)$ be a continuous in $(t,x)$ version of 
the local time process of a standard Brownian motion $B$. 
Then for any $K > 0$ and $k \ge 0$, there exist $N\ge 1$ and $C>0$, such that, for any  
$\varepsilon \in (0,1)$, one can find  $N$ points $x_1,\dots,x_N \in \RR$, satisfying $\va{x_i-x_j} \ge \varepsilon^{1/8}$ for all $i\neq j$, and
\begin{eqnarray}
\label{4pL1}
\PP\left(\#\{j\le N\ : \ L_1(x_j)>\varepsilon^{1/4}\} \le k \right) \le C\, \varepsilon^K \, .
\end{eqnarray}
\end{lem}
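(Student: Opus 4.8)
The plan is to find a fixed collection of points where the Brownian local time at time $1$ is, with overwhelming probability, bounded below. First I would observe that the statement is really about a single fixed Brownian motion $B$ on $[0,1]$ and its local time $L_1$; the scaling that produced the requirement $\va{x_i - x_j} \ge \varepsilon^{1/8}$ and the threshold $\varepsilon^{1/4}$ is mild, since as $\varepsilon \to 0$ these become weak constraints. So for a given $K$ and $k$, I would fix some large spacing $a>0$ (independent of $\varepsilon$, to be chosen) and consider the candidate points $x_j = j a$ for $j = 0, 1, \dots, N-1$, where $N$ will be chosen large depending on $K$ and $k$. For $\varepsilon$ small enough, $a \ge \varepsilon^{1/8}$, so the separation condition holds automatically; for the finitely many larger values of $\varepsilon$ one can either enlarge the constant $C$ or rescale the points, both harmless.

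Next I would reduce \eqref{4pL1} to a large-deviation-type estimate. The event $\{\#\{j \le N : L_1(x_j) > \varepsilon^{1/4}\} \le k\}$ says that at least $N-k$ of the points $x_j$ carry local time $\le \varepsilon^{1/4}$. Rather than controlling arbitrary subsets, it is cleaner to note that the Brownian path on $[0,1]$ visits an interval of points: if $R := \max_{s \le 1} B_s$ and $r := \min_{s\le 1} B_s$, then by the occupation times formula and continuity of $t \mapsto B_t$, $L_1(x) > 0$ for every $x \in (r, R)$, and in fact one expects $L_1(x)$ to be bounded below by a positive random quantity on a macroscopic subinterval. The key probabilistic input is then a one-sided bound: for any fixed point $x$ and any $\eta > 0$,
$$\PP\big(0 < L_1(x) \le \eta\big) \le C_1\, \eta,$$
which follows from the explicit joint density of $(B_1, L_1(x))$ — or more simply from the first Ray-Knight theorem, or from the fact that $L_1(x)$, restricted to the event that $x$ lies strictly between $r$ and $R$, has a bounded density near $0$. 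Applying this with $\eta = \varepsilon^{1/4}$ gives a bound of order $\varepsilon^{1/4}$ per point; but that is only one power of $\varepsilon$, so a single point is not enough, which is precisely why one takes $N$ points.

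The mechanism to upgrade one power to $K$ powers is a near-independence argument across well-separated points. Here I would use the strong Markov property: the increments of $L_1$ over disjoint spatial blocks are conditionally independent given the order in which the path traverses them, or — more robustly — I would use the second Ray-Knight theorem, which represents $(L_1(x))_x$ near a fixed level as a squared Bessel-type process in the space variable, so that $L_1(x+a)$ given $L_1(x)$ behaves like a diffusion run for "time" $a$. From such a representation one gets that, conditionally on $L_1(x_{j-1})$, the probability that $L_1(x_j) \le \varepsilon^{1/4}$ is at most $C_2 \varepsilon^{1/4}/\sqrt{a}$ plus the probability of having left the "excursion interval" $(r,R)$, and the latter event, summed, is where one uses that with probability close to $1$ the range of $B$ on $[0,1]$ contains all of $\{x_0,\dots,x_{N-1}\}$ once $a$ is small — wait, we fixed $a$ large, so instead I use that $\PP(\text{range} \supset [0,(N-1)a]) \to 0$, hence one must localize differently: I would actually rescale, taking $x_j = j\varepsilon^{1/8}$ (the borderline spacing), so the whole configuration sits in an interval of length $N\varepsilon^{1/8} \to 0$ and is therefore inside the range of $B$ with probability $\to 1$. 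The main obstacle is then controlling the conditional probabilities $\PP(L_1(x_j) \le \varepsilon^{1/4} \mid \mathcal F_{x_{j-1}})$ uniformly: with spacing only $\varepsilon^{1/8}$ the Bessel process moves by order $\varepsilon^{1/16}$, which is larger than $\varepsilon^{1/4}$, so starting from a value $> \varepsilon^{1/4}$ it stays $>\varepsilon^{1/4}$ with probability $1 - O(\varepsilon^{1/4}/\varepsilon^{1/16}) = 1 - O(\varepsilon^{3/16})$ only if it started not too close to the threshold — so one also needs the unconditional density bound at each step. Multiplying $N$ such factors, each contributing a gain of a fixed positive power of $\varepsilon$ (after discarding $k$ of the points to absorb the worst cases, via a union bound over the $\binom{N}{k}$ choices of the bad indices), yields a bound $\binom{N}{k} (C_3 \varepsilon^{\alpha})^{N-k} \le C \varepsilon^K$ once $N$ is taken large enough that $\alpha(N-k) \ge K$. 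Assembling these estimates — the per-point density bound, the conditional bound from a Ray-Knight representation, the union bound over bad subsets, and the event that the range covers the configuration — gives \eqref{4pL1}. The delicate point, and the one I would spend the most care on, is making the conditional-independence / Markov-in-space step rigorous with uniform constants down to the scale $\varepsilon^{1/8}$, so I would likely prove a clean lemma: there is $\alpha > 0$ and $C_4$ such that for all $\varepsilon$ small, all $y \in \RR$, $\PP(L_1(y+\varepsilon^{1/8}) \le \varepsilon^{1/4} \mid (L_s(z))_{z \le y}) \le C_4 \varepsilon^{\alpha}$ on the event $\{L_1(y) > \varepsilon^{1/4},\ y < R\}$, and then chain it.
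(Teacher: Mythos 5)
Your overall strategy (many well-separated points, a per-point bound of a fixed positive power of $\varepsilon$, and a chaining/near-independence argument to multiply the powers up to $\varepsilon^K$) is in the right spirit, but two steps are genuinely broken. First, the configuration itself: with $x_j=j\varepsilon^{1/8}$ (or $ja$), $j=0,\dots,N-1$, all lying on the nonnegative half-line, the claimed bound \eqref{4pL1} is false for $K>1/4$, no matter how large $N$ is. Indeed, on the event that $B$ hits $-1$ before $+\varepsilon^{1/8}$ (probability $\asymp\varepsilon^{1/8}$), that the local time at $0$ accumulated before that hitting time is at most $\varepsilon^{1/4}$ (conditional probability $\asymp\varepsilon^{1/8}$, since that local time is exponential with mean $\asymp\varepsilon^{1/8}$), and that $B$ then stays negative up to time $1$ (constant probability), every $x_j$ carries local time at most $\varepsilon^{1/4}$; this event has probability of order $\varepsilon^{1/4}$, not $O(\varepsilon^K)$. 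The same defect appears in your "range covers the configuration with probability $\to 1$" step: that failure probability is $\asymp\varepsilon^{1/8}$, a fixed low power, so it can never be absorbed into an $O(\varepsilon^K)$ bound. The paper's choice avoids this: the points form a block $\X_0=\{j\varepsilon^{1/8}:-8K\le j\le 8K\}$ \emph{symmetric about the origin}, and instead of asking the range to cover everything one only uses that $B$ exits the ball of radius $8K\varepsilon^{1/8}$ before time $1$ (failure probability $\le C\varepsilon^K$ by the small-ball estimate); whichever side it exits through, it is forced to cross $8K$ of the chosen points.

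Second, your "clean lemma" --- an almost sure bound $\PP\bigl(L_1(y+\varepsilon^{1/8})\le\varepsilon^{1/4}\mid (L_1(z))_{z\le y}\bigr)\le C\varepsilon^{\alpha}$ on $\{L_1(y)>\varepsilon^{1/4},\,y<R\}$ --- is both unproved and false as stated. The Ray--Knight theorems describe the spatial law of local time at hitting times or inverse local times, not at the fixed time $1$; the field $x\mapsto L_1(x)$ is not Markov in $x$, and $\{y<R\}$ is not even measurable with respect to the conditioning you propose. More seriously, on conditioning realizations where $\int_{-\infty}^{y}L_1(z)\,dz$ is close to $1$ (almost no time remains to the right of $y$, which is typical when $y$ is near the running maximum), the conditional probability that $L_1(y+\varepsilon^{1/8})$ is small is close to $1$, not $O(\varepsilon^{\alpha})$, so the chaining collapses. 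The paper circumvents exactly this by chaining \emph{in time} rather than in space: with $s_m$ the successive exit times of the balls of radius $m\varepsilon^{1/8}$, the strong Markov property applies legitimately, the local-time increment at the current site before the next exit stochastically dominates $\varepsilon^{1/8}$ times an exponential of mean $1$ (so each step costs a clean factor $\varepsilon^{1/8}$, independently of the past), and the fixed horizon enters only through the single estimate $\PP(s_{8K}>1)\le C\varepsilon^K$. Finally, your union bound over the $\binom{N}{k}$ choices of bad indices is workable in principle, but the paper's pigeonhole over $k+1$ disjoint translated blocks $\X_m\cup\X_{-m}$ is simpler and avoids having to chain through points that are themselves assumed bad.
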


\begin{lem} 
\label{lemeqx2}
For any $K>0$ and $k \ge 1$, there exist a constant $C>0$ and an integer $M \ge 1$, 
such that for all $x\in \RR$, $\varepsilon \in (0,1)$, 
and $t_2, \dots, t_k > 0$, 
\begin{eqnarray*}
\PP\left(L_1^{(1)}(x)>\varepsilon^{1/4},\  \inf_{v\in \RR^{k-1}}\, \sup_{|y-x|\le M\varepsilon}\, |F_v(y)| 
\le \varepsilon\right) \le C\, \varepsilon^K . 
\end{eqnarray*}
\end{lem}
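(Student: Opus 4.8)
The plan is to decouple the two sources of randomness by conditioning on $B^{(2)},\dots,B^{(k)}$. Conditionally on these, the functions $t_j^{-3/4}L^{(j)}_{t_j}$ with $j\notin J$, $j\ge 2$, are deterministic, so, writing $I:=[x-M\varepsilon,x+M\varepsilon]$, the set $\{F_v|_I:\ v\in\RR^{k-1}\}$ is the translate by $L^{(1)}_1|_I$ of a fixed linear subspace $\mathcal{W}_0\subseteq C(I)$ with $\dim\mathcal{W}_0\le k-1$; consequently $\inf_v\sup_I|F_v|$ equals the $L^\infty(I)$-distance from $L^{(1)}_1$ to $\mathcal{W}_0$. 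Since $B^{(1)}$ is independent of $B^{(2)},\dots,B^{(k)}$, it is enough to prove the following statement about a single Brownian local time: for every $K>0$ and $k\ge 1$ there are $M\ge 1$ and $C>0$ such that, uniformly over $x\in\RR$ and over all linear subspaces $W\subseteq C(I)$ with $\dim W\le k-1$,
$$\PP\Big(L_1(x)>\varepsilon^{1/4},\ d_{L^\infty(I)}(L_1,W)\le\varepsilon\Big)\le C\,\varepsilon^K.$$

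I would then discretize $W$. On the event $\{\sup_y L_1(y)\le R\}$, which fails with probability at most $\varepsilon^K$ as soon as $R=R(\varepsilon)$ is a suitable multiple of $\sqrt{\log(1/\varepsilon)}$, any element of $W$ that is within $\varepsilon$ of $L_1$ on $I$ lies in the ball of radius $2R$, so $W$ may be replaced by a finite $\varepsilon$-net of cardinality at most $(6R/\varepsilon)^{k-1}\le\varepsilon^{-k}$ for small $\varepsilon$. It therefore suffices to show that, for each fixed continuous function $w$ on $I$,
$$\PP\Big(L_1(x)>\varepsilon^{1/4},\ \|L_1-w\|_{L^\infty(I)}\le 2\varepsilon\Big)\le \varepsilon^{K+k}$$
once $M$ is large; summing over the net and adding the error term then gives the claim.

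The heart of the argument is this tube estimate, for which I would use the Markov structure of the local time in the space variable: conditionally on $L_1(x)=a$, the processes $h\mapsto L_1(x+h)$ and $h\mapsto L_1(x-h)$, $h\ge 0$, are independent and Markovian, of squared-Bessel type with diffusion coefficient $2\sqrt{L_1(\cdot)}$, this being the form taken by the Ray--Knight theorem. Starting from $a>\varepsilon^{1/4}$ and running over a parameter range of length $M\varepsilon$, a standard hitting-time bound shows that $L_1$ stays above $\varepsilon^{1/4}/2$ throughout $I$ except on an event of probability $\exp(-c\,\varepsilon^{-3/4})$, which is negligible; on the complement the diffusion coefficient is of order at least $\varepsilon^{1/8}$, so if $I$ is cut into $m$ subintervals of length of order $M\varepsilon/m$, the transition density of $L_1$ across each of them is at most $C\sqrt{m/M}\,\varepsilon^{-5/8}$. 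The tube condition forces each of the $m$ increments of $L_1$ to fall into a fixed window of length of order $\varepsilon$, so by the Markov property the tube probability is at most $\big(C\sqrt{m/M}\,\varepsilon^{3/8}\big)^{m}$. Choosing first $m$ with $3m/8\ge K+k$ and then $M$ large enough that $C\sqrt{m/M}\le 1$ makes this at most $\varepsilon^{K+k}$, as required.

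The one real obstacle is that the Ray--Knight description above is cleanest for the local time taken at an inverse-local-time clock rather than at the deterministic time $1$. This can be handled by conditioning on the first hitting time $\sigma_x$ of $x$ by $B^{(1)}$, finite on the event in question, and using the strong Markov property: the question becomes one about the local time near $0$ of a Brownian motion started at $0$ and run for a time $u=1-\sigma_x\in(0,1]$, and a Brownian scaling disposes of the dependence on $u$ (the constraint $L_1(x)>\varepsilon^{1/4}$ renders very small $u$ irrelevant). One should also note at the outset that the event that $L_1$ reaches $0$ somewhere inside $I$ while exceeding $\varepsilon^{1/4}$ at the centre of $I$ has super-polynomially small probability and may be discarded; with this in hand the squared-Bessel diffusion stays away from its absorbing boundary and the density bounds used above are legitimate. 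The remaining ingredients — the net argument together with the one-dimensional hitting-time and increment estimates — are routine.
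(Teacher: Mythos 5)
Your proposal assumes away the two points that constitute the real content of the paper's proof. First, the decoupling step is based on a false premise: $B^{(1)},\dots,B^{(k)}$ are \emph{consecutive pieces of the same Brownian path}, so the functions $t_j^{-3/4}L^{(j)}_{t_j}$, $j\ge 2$, spanning your space $\mathcal W_0$ depend on the starting points $B_{T_{j-1}}$, hence on $B^{(1)}$ (at least through its endpoint $B^{(1)}_1$). Conditioning on $B^{(2)},\dots,B^{(k)}$ therefore changes the law of $L^{(1)}_1$ (into a bridge-type law), and your reduction to a bound that is uniform over \emph{deterministic} subspaces $W$, with $L_1$ an unconditioned Brownian local time, is not justified. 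This dependence is precisely what the paper spends most of its proof on: it absorbs the discrepancy between $L^{(1)}_1$ and $L^{(1)}_\tau$ together with the later pieces into a random affine space $\V^*$, notes explicitly that $\V^*$ is \emph{not} independent of $(Y,\tau)$, and resolves this through the auxiliary variable $\tau'$, excursion theory, the conditional Bessel estimate of Lemma \ref{Bessel}, and the absolute-continuity argument for the joint law of $(\tau,\tau')$.

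Second, the ``heart'' of your argument rests on a wrong form of the Ray--Knight theorem. At the \emph{deterministic} time $1$, the field $y\mapsto L_1(y)$ is not, conditionally on $L_1(x)$, a pair of independent squared-Bessel-type Markov processes in the space variable (for instance the constraint $\int_\RR L_1(y)\,dy=1$ already forbids this); the Ray--Knight descriptions hold at specific stopping times. Your proposed fix --- strong Markov at the hitting time $\sigma_x$ plus scaling --- does not remove the difficulty: after that step you are again facing the local time at a fixed time of a Brownian motion started at $x$, i.e.\ the same object, and moreover you discard the local time accumulated before $\sigma_x$ at points lying between the starting point and $x$. The paper's way around this is to stop at $\tau=\inf\{s:\,L^{(1)}_s(x)>\varepsilon^{1/4}\}$, where the second Ray--Knight theorem applies exactly and yields a genuine $\mathrm{BESQ}(0)$ started at $\varepsilon^{1/4}$ (one-sided in space), at the price of creating the random affine space $\V^*$ and the dependence problem of the previous paragraph. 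The parts of your sketch that do work --- the $\varepsilon$-net over a $(k-1)$-dimensional space combined with per-step transition-density bounds of order $\varepsilon^{-5/8}$ to produce a factor $\varepsilon^{cM}$ with $M$ large --- closely parallel the paper's Lemma \ref{Besselsanscond}, but as it stands the proposal has genuine gaps at the two steps above.
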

Indeed, we can first always assume that $\E_J$ is included in the union of at most $k$ intervals of length $\varepsilon$, since for
any $j \in J$ and $K\ge 1$, by scaling there exists $C>0$, such that
\begin{equation}
\label{contEJ} \PP \cro{\sup_{s \le t_j} \va{B^{(j)}_s - B^{(j)}_0} \ge \varepsilon/2} \le \PP \cro{ \sup_{s \le \varepsilon^4} 
\va{B_s}  \ge \varepsilon/2 } = \PP \cro{ \sup_{s \le 1} 
\va{B_s}  \ge \varepsilon^{-1}/2 } \le C\, \varepsilon^K \, .
\end{equation}
Thus, among any $k+1$ points at distance larger than $\varepsilon^{1/8}$ from each other, at least one of them must be 
at distance larger than $M \varepsilon$ from $\E'_J$, at least if $\varepsilon$ is small enough. 
Therefore Lemma \ref{lemeqx1} shows that for any $K\ge 1$, there exists $C>0$, such that 
\begin{eqnarray*}
\PP \cro{ \inf_v \sup_{y \notin \E'_J} \va{F_v( y)} \le \varepsilon}
 \le C \, \varepsilon^K 
 + \sum_{m=1}^N \PP \cro{L_1^{(1)}(x_m) > \varepsilon^{1/4},\ \inf_v \sup_{\va{y-x_m} \le M \varepsilon}
  \va{F_v(y)} \le \varepsilon}
 \, , 
\end{eqnarray*} 
where $(x_1,\dots,x_N)$ are given by Lemma \ref{lemeqx1}. Then \eqref{normeinfini} follows from the above inequality and Lemma \ref{lemeqx2}. This concludes the proof of Proposition \ref{problambdabar}.  
\hfill $\square$

\subsection{Proof of Lemma \ref{lemeqx1}.} 
We first prove the result for $k=0$. Assume without loss of generality that $K$ is an integer larger than $1$ and  set 
$$\X_0=\acc{j\varepsilon^{1/8}\ :\ -8K \le j \le 8K}.$$
Set also $s_0:=0$ and for every  $m\ge 1$,   
$$s_m := \inf\{s>0\ :\ |B_s| \ge  m\, \varepsilon^{1/8}\}.$$ 
Note already that there exists $C>0$, such that for all $\varepsilon \in (0,1)$, 
\begin{eqnarray*}
\PP(s_{8K} >1) \le \PP\left(\sup_{s\le 1}|B_s| \le 8K \varepsilon^{1/8}\right) \le C \, \varepsilon^K,
\end{eqnarray*} 
by using for instance \cite[Proposition 8.4 p.52]{PS}. 
Thus it suffices to prove that 
\begin{equation}
\label{kzero}
\PP \left( L_{s_{8K}}(x) \le \varepsilon^{1/4} \quad \forall x \in \X_0\right)
\le \varepsilon^K , 
\end{equation}
for all $\varepsilon \in (0,1)$. By using the Markov property, and noting that $s_m \ge s_{m-1}+ s_1 \circ \theta_{s_{m-1}}$ (where $\theta$ is the usual
 shift on the trajectories),  we get a.s. for every $m \ge 1$,  
\[
 \PP\left(L_{s_m}(B_{s_{m-1}}) - L_{s_{m-1}}(B_{s_{m-1}})\le \varepsilon^{1/4}
    \mid \F_{s_{m-1}} \right) \le   \PP(L_{s_1}(0)\le \varepsilon^{1/4}) .
    \]
 By  the scaling property of the Brownian motion, we know that $L_{s_1}(0)$
 has the same law as $\varepsilon^{1/8} L'_1(0)$, 
with $L'_1(0)$ the local time of a standard Brownian motion taken at the first hitting time of $\{\pm 1\}$.   
Moreover, it is known that $L'_1(0)$ is an exponential random variable with parameter 1 (see for instance \cite{RY} Exercise (4.12) chap VI, p. 265). 
Therefore, a.s. for every $m\ge 1$,  
\begin{eqnarray*}
 \PP\left(L_{s_m}(B_{s_{m-1}}) - L_{s_{m-1}}(B_{s_{m-1}})\le \varepsilon^{1/4}
    \mid \F_{s_{m-1}} \right)
  \le\  \PP(L'_1(0)\le \varepsilon^{1/8})  \le  \varepsilon^{1/8} . 
\end{eqnarray*}
Then we get by induction, 
\[ 
 \PP\left( L_{s_{8K}}(x) \le \varepsilon^{1/4} \quad \forall x \in \X_0\right)
 \le 
  \PP\left(L_{s_m}(B_{s_{m-1}}) - L_{s_{m-1}}(B_{s_{m-1}}) \le \varepsilon^{1/4} \quad \forall m\le 8K\right)
\le \varepsilon^{K},
\]
proving \refeq{kzero}. This concludes the proof of the lemma for $k=0$.

\noindent Now we prove the result for general $k\ge 0$. For $m \in \ZZ$, consider the set 
$$\X_m:= m(16K+1) \varepsilon^{1/8} + \X_0.$$
Then the proof above shows similarly that for any $0\le m\le k$, 
\begin{equation*}
\PP \left( L_1(x) \le \varepsilon^{1/4} \quad \forall x \in \X_m \cup \X_{-m}\right)\le C\, \varepsilon^K .
\end{equation*}
The lemma follows immediately. \hfill $\square$

\subsection{Proof of Lemma \ref{lemeqx2}.} 
Let $K>0$ be fixed, and assume without loss of generality that $x\ge 0$. Fix also $M\ge 1$ some integer to be chosen later.

\noindent For every affine subspace $V$ of $\RR^M$, we denote by $V_\varepsilon$ the set 
$$V_\varepsilon:=\{v\in \RR^M \ :\ d(v,V)\le \varepsilon\},$$
where $d(v,V)=\min\{|v-y|_\infty\ :\ y\in V\}$. Then we can write 
\begin{eqnarray*} 
&& \PP\left(L^{(1)}_{1}(x) > \varepsilon^{1/4},\  
\inf_{v\in \RR^{k-1}} \sup_{\va{y-x} \le M \varepsilon } |F_v(y) |\le \varepsilon  \right)\\ 
& \le & \PP\left[L^{(1)}_{1}(x) >\varepsilon^{1/4},\ (L^{(1)}_{1}(x+\varepsilon),\dots,L^{(1)}_{1}(x+M\varepsilon))\in \V_\varepsilon\right]
:= {\mathcal P}_\varepsilon, 
\end{eqnarray*} 
where
$$\V:=Vect \left( \left(L^{(j)}_{t_j}(x+\ell\varepsilon )\right)_{
\ell=1,\dots,M},\ j \in I \right),$$
with $I:= \{j>1 \ : \ j \notin J\}$. 
Set now
$$\tau:=\inf \{s>0\ :\ L^{(1)}_s(x)>\varepsilon^{1/4}\},$$
and for $y \ge 0$, $Y(y):=L^{(1)}_{\tau}(x +y)$. 
It follows from the second Ray--Knight theorem (see \cite{RY}, Theorem (2.3) p.456)  
that $Y$ is equal in law to a squared Bessel process of dimension 0 starting from $\varepsilon^{1/4}$.
Moreover, with this notation, we can write   
\begin{eqnarray}
\label{pepsilon}
{\mathcal P}_\varepsilon = \PP\left[\tau < 1\quad \mbox{and} 
\quad (Y(\varepsilon),\dots,Y(M\varepsilon)) \in \V^*_\varepsilon\right],
\end{eqnarray} 
with
$$\V^*:= (Y(\ell\varepsilon)- L^{(1)}_{1}(x+\ell\varepsilon))_{\ell=1,..., M} + \V,$$
which is an affine space of $\RR^M$, of dimension at most $k-1$ .

\noindent Observe now that even on the event $\{\tau <1\}$, the space $\V^*$ is not independent of $Y$ and $\tau$, since its law depends a priori on $\tau$. 
However, if this was true (and we will see below  how one can reduce the proof to this situation), then ${\mathcal P}_\varepsilon $ 
would be dominated by  
\[ \sup_{V} \, \PP\left[ (Y(\varepsilon),\dots,Y(M\varepsilon)) \in V_{\varepsilon}\right] \, ,
\] 
with the  $\sup$ taken over all affine subspaces $V \subseteq \RR^M$ of dimension at most $k-1$. 
This last term in turn is controlled by the following lemma, whose proof is postponed to the next subsections.
\begin{lem}\label{Besselsanscond}
Let $Y$ be a squared Bessel process of dimension $0$ starting from 
$\varepsilon^{1/4}$. For any $M \ge 1$ and $k \ge 1$, there exists $C > 0$, such that for all $\varepsilon \in (0,1)$,
\begin{eqnarray}
\label{Besselsanscond.eq}
 \sup_V \, \PP\left[(Y(\varepsilon),\dots,Y
(M\varepsilon))
\in V_\varepsilon \right]  \le C\, \varepsilon^{(5M-4(k-1))/8}, 
\end{eqnarray} 
where the $\sup$ is over all affine subspaces $V \subseteq \RR^M$ of dimension at most $k-1$. 
\end{lem}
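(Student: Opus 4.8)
The plan is to convert the geometric condition ``$(Y(\varepsilon),\dots,Y(M\varepsilon))$ is $\varepsilon$-close to an at most $(k-1)$-dimensional affine space'' into a bounded number of one-dimensional pinning constraints on the coordinates $Y(j\varepsilon)$, and then to estimate each of them using the Markov property of the squared Bessel process together with the explicit form of its transition density $q^0_s$. First I would carry out the linear-algebra reduction. Given an affine $V=v_0+W\subseteq\RR^M$ with $\dim W=r\le k-1$, choose a set $S\subseteq\{1,\dots,M\}$ of $r$ coordinates realizing a maximal $r\times r$ minor of some matrix whose columns span $W$; by Cramer's rule, for each $j\notin S$ the $j$-th coordinate of every point of $V$ is a fixed affine function $\ell_j$ of the coordinates indexed by $S$, with all coefficients in $[-1,1]$. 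Since $|v-w|_\infty\le\varepsilon$ forces every coordinate of $v$ to be within $\varepsilon$ of the corresponding coordinate of $w\in V$, on the event $\{(Y(\varepsilon),\dots,Y(M\varepsilon))\in V_\varepsilon\}$ one has, for each of the $M-r\ge M-(k-1)$ indices $j\notin S$,
$$\va{\,Y(j\varepsilon)-\ell_j\big((Y(i\varepsilon))_{i\in S}\big)\,}\le (r+1)\varepsilon\le k\varepsilon ,$$
with $\ell_j$ depending only on $V$. So it suffices to bound, uniformly in $V$, the probability of these simultaneous pinning constraints.

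Next I would discard a negligible event in order to keep $Y$ away from the absorbing point $0$, where $q^0_s(a,b)$ degenerates. Since $Y$ starts from $\varepsilon^{1/4}$ and $\var(Y(s))=4\varepsilon^{1/4}s$, its typical fluctuation over $[0,M\varepsilon]$ is of order $\varepsilon^{5/8}$, which is far smaller than $\varepsilon^{1/4}$; a hitting-time estimate (the probability of leaving $[\tfrac12\varepsilon^{1/4},2\varepsilon^{1/4}]$ before time $M\varepsilon$ is essentially a Gaussian small-deviation probability) then gives $\PP(\exists\,s\le M\varepsilon:\ Y(s)\notin[\tfrac12\varepsilon^{1/4},2\varepsilon^{1/4}])\le C\varepsilon^K$ for every $K$. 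On the complementary event all the values $Y(j\varepsilon)$ lie between $\tfrac12\varepsilon^{1/4}$ and $2\varepsilon^{1/4}$. I would then condition on $\mathcal G:=\sigma(Y(i\varepsilon):i\in S)$: on $\mathcal G$ each $\ell_j$ becomes a known constant $c_j$, and the conditional joint law of $(Y(j\varepsilon))_{j\notin S}$ is that of BESQ$(0)$ sampled on the complementary grid with its values pinned at $0$ and at the $S$-grid points, i.e. a concatenation of squared Bessel bridges, whose joint density factors into bridge transition kernels. On the good event each such kernel is bounded by $C\varepsilon^{-5/8}$ (from the explicit expression for $q^0_s(a,b)$ at arguments of order $\varepsilon^{1/4}$ over time increments $\le M\varepsilon$; the blow-up of $q^0$ near $0$ is harmless since a bridge conditioned on a positive right endpoint is pushed off $0$). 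Hence each pinning constraint contributes a factor $\le 2k\varepsilon\cdot C\varepsilon^{-5/8}$, and multiplying the $M-r$ of them and adding the bad-event bound gives $\PP((Y(\varepsilon),\dots,Y(M\varepsilon))\in V_\varepsilon)\le C\varepsilon^K+(C')^{M-r}\varepsilon^{3(M-r)/8}$; taking the supremum over $V$, using $r\le k-1$ and keeping careful track of the numerology then yields the announced power of $\varepsilon$.

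The hard part will be the conditioning step: producing a bound on the relevant conditional (bridge) densities that is genuinely uniform over all affine spaces $V$ and over all values of the free coordinates $(Y(i\varepsilon))_{i\in S}$, including near the absorbing boundary $0$ — this is precisely why the away-from-zero truncation is needed, and why one must keep track of which coordinates are ``free'' and how the remaining ones are pinned to them. A related point to be careful about, already raised in the discussion preceding the lemma, is that in the eventual application the affine space is not independent of the Bessel process and its hitting time; one passes to the $\sup_V$ formulation — which is exactly the statement of this lemma — before that difficulty arises, so the constants produced here must not secretly depend on any such conditioning.
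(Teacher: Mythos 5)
Your route is genuinely different from the paper's: the paper first shows (its Lemma \ref{Bessel0}) that all coordinates concentrate in the box $\{\va{y_\ell-\varepsilon^{1/4}}\le\varepsilon^{1/2}\}$, then covers $V_\varepsilon$ intersected with that box by $\O(\varepsilon^{-(k-1)/2})$ balls of radius $\varepsilon$ and applies the Markov property with a one-step density bound, whereas you select $r\le k-1$ ``free'' coordinates via a maximal minor and turn membership in $V_\varepsilon$ into $M-r$ pinning constraints of width $2k\varepsilon$, to be estimated by conditional (bridge) densities. The linear-algebra reduction with coefficients in $[-1,1]$ is correct, and your claimed one-step density bound $C\varepsilon^{-5/8}$ is of the right order (the variance of a BESQ$(0)$ increment of duration $\varepsilon$ from level $\asymp\varepsilon^{1/4}$ is $4\varepsilon^{5/4}$). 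But precisely because of this, your final step fails: each pinned coordinate contributes $\varepsilon\cdot C\varepsilon^{-5/8}=C\varepsilon^{3/8}$, so your bound is $C\varepsilon^K+C^{M}\varepsilon^{3(M-r)/8}$ with $r\le k-1$, i.e.\ an exponent $3(M-k+1)/8$, which is strictly smaller than the announced $(5M-4(k-1))/8$ whenever $2M>k-1$ --- that is, in the whole relevant regime. No bookkeeping closes this gap of $(2M-(k-1))/8$; the sentence ``keeping careful track of the numerology then yields the announced power'' is exactly the step that does not go through.

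Two further remarks. First, the discrepancy is not entirely your fault: the paper's proof bounds $\sup q_\varepsilon(y,z)$ over the box by $\O(\varepsilon^{-3/8})$, but the explicit kernel together with $I_1(u)\asymp e^u/\sqrt u$ gives $\sup q_\varepsilon\asymp\varepsilon^{-5/8}$, so the honest per-coordinate factor is $\varepsilon^{3/8}$, not $\varepsilon^{5/8}$; correspondingly the covering argument yields $(3M-4(k-1))/8$, and the exponent as stated in the lemma cannot hold in general (for $k=1$ and $V$ the single point $(\varepsilon^{1/4},\dots,\varepsilon^{1/4})$ the probability is of order $\varepsilon^{3M/8}$). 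Both the corrected exponent and yours still suffice for the only application, Lemma \ref{lemeqx2}, upon choosing $M\ge(4(k-1)+8K)/3$ instead of $(4(k-1)+8K)/5$, since $M$ is free there; but a proof of the statement as written is not what you (or, strictly, the paper) obtain, and you should state the exponent you actually prove. Second, your key estimate on the bridge kernels is asserted rather than proved, and it is not innocuous: on your good band the denominator $q_{s+t}(a,b)$ can be as small as $\exp(-c\varepsilon^{-3/4})$ when $a,b$ sit at opposite ends of $[\tfrac12\varepsilon^{1/4},2\varepsilon^{1/4}]$, so the uniform bound $C\varepsilon^{-5/8}$ on $q_s(a,u)q_t(u,b)/q_{s+t}(a,b)$ requires two-sided asymptotics of $I_1$ together with the inequality $(\sqrt a-\sqrt b)^2/(s+t)\le(\sqrt a-\sqrt u)^2/s+(\sqrt u-\sqrt b)^2/t$ to cancel the exponentially small denominator; this can be carried out, but it is a genuine piece of work that the paper's covering argument avoids altogether by never conditioning on future values.
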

So at this point we are just led to see how one can solve the problem of the dependence between $\V^*$ and $\tau$.   
To this end, we introduce  the time $\tau'$ spent by $B^{(1)}$ above $x$ before time $\tau$, which by the occupation times formula (see \cite{RY}, Theorem (2.3) p.456) is equal to:
 $$\tau':=\int_0^{\tau}{\bf 1}_{\{B_s^{(1)}\ge x \}}\, ds= \int_0^\infty Y(y)\, dy.$$ 
Moreover, $\tau'$ is also equal in law to the first hitting time of $\varepsilon^{1/4}/2$ by a 
Brownian motion (see the proof of Theorem (2.7) p.243 
in \cite{RY}). In particular 
\begin{equation}
\label{cont.tau'}
\PP(\tau' \le \varepsilon^{3/4}) = \O(\varepsilon^K).
\end{equation}
Next instead of using Lemma \ref{Besselsanscond}, we will need the following refinement:   
\begin{lem}\label{Bessel}
Let $M\ge 1$ be some integer. Let $Y$ be a squared Bessel process of dimension $0$ starting from 
$\varepsilon^{1/4}$. Set
$$A_\varepsilon:=\left\{|Y(M\varepsilon)-\varepsilon^{1/4}|\le \varepsilon^{1/2}\quad
   \mbox{and}\quad \int_0^{M\varepsilon}Y(y)\, dy<\varepsilon\right\}. $$
Then $\PP(A_\varepsilon^c)=\O(\varepsilon^K)$.
Moreover, for any $M \ge 1$ and $k \ge 1$, there exists $C >0$, such that for any affine space $V$ of dimension at most $k-1$,
almost surely for all $\varepsilon \in (0,1)$,
\begin{eqnarray*}
\ind_{ \{\int_0^\infty Y(y)\, dy  \ge \varepsilon^{3/4} \}}\,  \PP\left[(Y(\varepsilon),\dots,Y
(M\varepsilon))
\in V_\varepsilon,\ 
    A_\varepsilon \ \Big|\  \int_0^\infty Y(y)\, dy \right]  \le C\, \varepsilon^{(5M-4(k-1))/8}.
\end{eqnarray*} 
\end{lem}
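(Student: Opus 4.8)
Lemma \ref{Bessel} asserts two things about $Y$, a squared Bessel process of dimension $0$ started from $\varepsilon^{1/4}$: first, that the event $A_\varepsilon$ (a two-sided control on $Y(M\varepsilon)$ and an upper bound on $\int_0^{M\varepsilon} Y$) holds off a set of probability $\O(\varepsilon^K)$, and second, a conditional version of Lemma \ref{Besselsanscond} in which the conditioning is on the total integral $\int_0^\infty Y(y)\,dy$ (the quantity that, upstream, equals $\tau'$ in law and is therefore the source of the dependence between $\V^*$ and $\tau$). The plan is to treat these two assertions separately, the first by elementary estimates for $\mathrm{BESQ}^0$, the second by a Markov/scaling argument that reduces it to the unconditional Lemma \ref{Besselsanscond}.

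For the estimate $\PP(A_\varepsilon^c)=\O(\varepsilon^K)$: $\mathrm{BESQ}^0$ from $a$ is a nonnegative continuous martingale absorbed at $0$, and it enjoys the Brownian scaling $Y(\cdot)\overset{(d)}{=}c\,\widehat Y(\cdot/c)$ with $\widehat Y$ started from $a/c$. Taking $c=\varepsilon^{1/4}$ turns $Y$ started from $\varepsilon^{1/4}$ into $\varepsilon^{1/4}$ times a $\mathrm{BESQ}^0$ started from $1$ and run for time $M\varepsilon^{3/4}\to 0$. Then $|Y(M\varepsilon)-\varepsilon^{1/4}| = \varepsilon^{1/4}|\widehat Y(M\varepsilon^{3/4})-1|$, and since $\widehat Y$ is a continuous martingale with $\widehat Y(0)=1$, the probability that $\sup_{s\le M\varepsilon^{3/4}}|\widehat Y(s)-1|\ge \varepsilon^{1/4}$ is $\O(\varepsilon^K)$ for every $K$ — this follows from a Burkholder–Davis–Gundy or Doob estimate together with the explicit quadratic variation $d\langle \widehat Y\rangle_s = 4\widehat Y(s)\,ds$ of $\mathrm{BESQ}^0$, or more simply by comparison with the time-changed Brownian motion representation. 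The bound $\int_0^{M\varepsilon}Y(y)\,dy < \varepsilon$ holds on the same high-probability event, since $\int_0^{M\varepsilon}Y(y)\,dy \le M\varepsilon\sup_{s\le M\varepsilon}Y(s)$, and on $A_\varepsilon$'s good event $\sup_{s\le M\varepsilon}Y(s)\le 2\varepsilon^{1/4}$, so the integral is at most $2M\varepsilon^{5/4}<\varepsilon$ for $\varepsilon$ small. This handles the first claim.

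For the conditional inequality, the idea is to use the fact that $\left(\int_0^\infty Y(y)\,dy \mid \mathcal F_{M\varepsilon}\right)$ decomposes as $\int_0^{M\varepsilon}Y(y)\,dy$ (which is $\mathcal F_{M\varepsilon}$-measurable) plus $\int_{M\varepsilon}^\infty Y(y)\,dy$, and by the Markov property the latter, conditionally on $Y(M\varepsilon)$, is the total integral of a fresh $\mathrm{BESQ}^0$ started from $Y(M\varepsilon)$ — which by the occupation-time identity used upstream is distributed as the first hitting time of $Y(M\varepsilon)/2$ by a Brownian motion, i.e.\ a scaled stable-$1/2$ variable. Thus conditioning on the value $\ell$ of $\int_0^\infty Y(y)\,dy$ amounts, on $A_\varepsilon$, to conditioning $\int_{M\varepsilon}^\infty Y$ to equal $\ell - \int_0^{M\varepsilon}Y \approx \ell$ (the correction being $\O(\varepsilon^{5/4})$, negligible since $\ell\ge \varepsilon^{3/4}$), which — because the stable-$1/2$ density is bounded on $[\varepsilon^{3/4},\infty)$ after the appropriate scaling — costs only a multiplicative factor that is $\O(\varepsilon^{-\text{const}})$, absorbable into the exponent provided $M$ is chosen large enough. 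After removing the conditioning one is left exactly with the unconditional event $\{(Y(\varepsilon),\dots,Y(M\varepsilon))\in V_\varepsilon\}$ on the law of $(Y(\varepsilon),\dots,Y(M\varepsilon))$, to which Lemma \ref{Besselsanscond} applies and yields the $\varepsilon^{(5M-4(k-1))/8}$ bound.

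The main obstacle will be making the conditioning argument rigorous: one must express the conditional law $\PP[\,\cdot \mid \int_0^\infty Y(y)\,dy\,]$ via a density, control that density uniformly from above on the relevant range $\{\int_0^\infty Y \ge \varepsilon^{3/4}\}\cap A_\varepsilon$, and check that the loss incurred is only a power of $\varepsilon$ small enough to be compensated by taking $M$ large in the exponent $(5M-4(k-1))/8$ of Lemma \ref{Besselsanscond}. The indicator $\ind_{\{\int_0^\infty Y(y)\,dy\ge \varepsilon^{3/4}\}}$ and the event $A_\varepsilon$ in the statement are precisely the hypotheses that keep this density estimate uniform, so the bookkeeping of which constants depend on $M$, $k$, $K$ is the delicate point; everything else is routine $\mathrm{BESQ}^0$ scaling.
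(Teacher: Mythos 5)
Your treatment of the first claim is correct and essentially the paper's (the paper invokes Lemma \ref{Bessel0} for the bound on $|Y(M\varepsilon)-\varepsilon^{1/4}|$ and stochastic domination by a squared Brownian motion for the integral; your scaling plus Burkholder--Davis--Gundy derivation is equivalent). Your skeleton for the conditional claim is also the paper's: split $\int_0^\infty Y=\int_0^{M\varepsilon}Y+\int_{M\varepsilon}^\infty Y$, apply the Markov property at time $M\varepsilon$, and use that, given $Y(M\varepsilon)=y$, the remaining integral has the hitting-time density $f_y(t)=\frac y2(2\pi t^3)^{-1/2}e^{-y^2/(8t)}$, so as to reduce to Lemma \ref{Besselsanscond}.

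The decisive estimate, however, is missing, and the fallback you propose would not prove the statement. What is needed (and what the paper proves) is that the density ratio is bounded by an \emph{absolute constant}, uniformly in $\varepsilon$, on the range forced by the indicator and by $A_\varepsilon$:
$$\sup_{t\ge \varepsilon^{3/4}}\ \sup_{t'\le\varepsilon}\ \sup_{|y-\varepsilon^{1/4}|\le\varepsilon^{1/2}}\ \frac{f_y(t-t')}{f_{\varepsilon^{1/4}}(t)}\ <\ \infty .$$
Testing against nonnegative $\phi$ supported on $[\varepsilon^{3/4},\infty)$ and writing $A'_\varepsilon(V):=\{(Y(\varepsilon),\dots,Y(M\varepsilon))\in V_\varepsilon\}\cap A_\varepsilon$, this gives
$$\EE\cro{\phi\pare{\textstyle\int_0^\infty Y}\,\PP\cro{A'_\varepsilon(V)\,\Big|\,\textstyle\int_0^\infty Y}}\ \le\ C\,\PP\cro{A'_\varepsilon(V)}\,\EE\cro{\phi\pare{\textstyle\int_0^\infty Y}},$$
and Lemma \ref{Besselsanscond} then yields the conditional bound with \emph{no loss}, i.e.\ with exactly the exponent $(5M-4(k-1))/8$. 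You instead anticipate a multiplicative cost $\O(\varepsilon^{-c})$ to be ``absorbed by taking $M$ large''. That is not available here: the lemma asserts the exact exponent for every $M\ge 1$, and it is this exact exponent that is used verbatim in the proof of Lemma \ref{lemeqx2} (where $M$ is chosen in terms of $K$ and $k$ on the basis of it). Moreover, the justification you sketch for the loss --- boundedness of the (scaled) stable-$1/2$ density on $[\varepsilon^{3/4},\infty)$ --- cannot even give a uniform power-of-$\varepsilon$ cost, because in the disintegration one divides by the unconditional density $f_{\varepsilon^{1/4}}(\ell)\asymp \varepsilon^{1/4}\ell^{-3/2}$, which has no lower bound uniform in $\ell\in[\varepsilon^{3/4},\infty)$; only the ratio comparison above, exploiting $|y-\varepsilon^{1/4}|\le\varepsilon^{1/2}$ and $t'\le\varepsilon\le t^{?}$ small compared with $t\ge\varepsilon^{3/4}$, closes the argument. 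This ratio bound is precisely the reason the event $A_\varepsilon$ and the indicator $\ind_{\{\int_0^\infty Y\ge\varepsilon^{3/4}\}}$ appear in the statement, so identifying and proving it is the content of the lemma rather than routine bookkeeping.
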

\noindent We postpone the proof of this lemma to the next subsections, and we conclude now the proof of Lemma \ref{lemeqx2}. 
First it follows from the excursion theory of the Brownian motion that, conditionally to $\tau'$, $Y$ is independent of $\tau$. 
On the other hand, conditionally to $\tau$, $\V^*$ is independent of $\tau'$ and $Y$.
Let $M$ be an integer such that $M \ge (4(k-1)+8K)/5$.
According to \eqref{pepsilon}, \eqref{cont.tau'}
and to the first part of Lemma \ref{Bessel}, we get
\begin{eqnarray*}
{\mathcal P}_\varepsilon
& \le & \PP\left(\tau <1,\  \tau' \ge \varepsilon^{3/4},  
 \  (Y(\varepsilon),\dots,Y(M\varepsilon))  \in \V^*_{\varepsilon},\ A_\varepsilon\right)
+\O(\varepsilon^K) \\
&\le& {\mathbb E}\left[{\mathbf 1}_{\{\tau<1,\tau'\ge \varepsilon^{3/4}
   \}}{\mathbb E}[f(Y,\mathcal V^*)|\tau,\tau']\right]+\O(\varepsilon^K),
\end{eqnarray*}
with 
$$f(y,V):={\mathbf 1}_{\{(y(\varepsilon),...,y(M\varepsilon))\in V_\varepsilon\}\cap
   \{|y(M\varepsilon)-\varepsilon^{1/4}|\le\varepsilon^{1/2}\ \mbox{and}
  \ \int_0^{M\varepsilon}y(s)\, ds<\varepsilon\}}.$$
Now, since on one hand $Y$ and $\mathcal V^*$ are independent conditionally to $(\tau,\tau')$, 
and on the other hand $Y$ and $\tau$ are independent conditionally to $\tau'$, we have
\begin{eqnarray*}
{\mathbb E}[f(Y,\mathcal V^*)|\tau,\tau'] 
    &=&\int {\mathbb E}[f(Y,V)|\tau,\tau']\, d\mathbb P_{\mathcal V^*|(\tau,\tau')}(V)\\
  &=& \int {\mathbb E}[f(Y,V)|\tau']\, d\mathbb P_{\mathcal V^*|(\tau,\tau')}(V).
\end{eqnarray*}
Since moreover, $\mathcal V^*$ and $\tau'$ are independent conditionally to $\tau$,
we get
\begin{equation}
{\mathbb E}[f(Y,\mathcal V^*)|\tau,\tau']  
   =  \int {\mathbb E}[f(Y,V)|\tau']\, d\mathbb P_{\mathcal V^*|\tau}(V).
\end{equation}
Hence, according to our choice of $M$,
\begin{eqnarray*}
{\mathcal P}_\varepsilon
&\le&{\mathbb E}\left[{\mathbf 1}_{\{\tau<1,\tau'\ge \varepsilon^{3/4}
   \}}   \int {\mathbb E}[f(Y,V)|\tau']\, d\mathbb P_{\mathcal V^*|\tau}(V) \right]
+\O(\varepsilon^K)\\
&\le& {\mathbb E}\left[ {\mathbf 1}_{\{\tau<1,\tau'\ge \varepsilon^{3/4}
   \}} \int {\mathbf 1}_{b(V,\varepsilon,\tau')}
    \, d\mathbb P_{\mathcal V^*|\tau}(V) \right]+ \O(\varepsilon^K),
\end{eqnarray*}
with
$$b(V,\varepsilon,t'):=\left\{{\mathbf 1}_{\{\tau'\ge \varepsilon^{3/4}
   \}}   {\mathbb E}[f(Y,V)|\tau'=t']> C\, \varepsilon^{(5M-4(k-1))/8}  \right \}.$$
The second part of Lemma \ref{Bessel} 
insures that, for every affine subspace $V$ of dimension at most $k-1$ of 
${\mathbb R}^M$, we have
$$ {\mathbf 1}_{b(V,\varepsilon,t')}=0 \quad \mbox{for }{\mathbb P}_{\tau'}\mbox{-almost every }
  t'>0.$$
However, since $b(V,\varepsilon,\tau')$ depends a priori on $V$, we cannot conclude directly. 
But it is well known that $\tau'$ admits a positive density function on $(0,+\infty)$ 
(see \eqref{densitetau'} below for an explicit expression).
Therefore, for every $V$,
\begin{equation}\label{Leb}
{\mathbf 1}_{b(V,\varepsilon,t')}=0 \quad \mbox{for Lebesgue almost every }
  t'>0.
\end{equation}
Now it follows from the excursion 
theory that $\tau'$ and $\tau-\tau'$ are independent and identically distributed. 
Therefore $(\tau,\tau')$ admits a continuous density function $h$ on $(0,+\infty)^2$
and we have
\begin{eqnarray}\label{peps}
{\mathcal P}_\varepsilon
&\le& \nonumber \O(\varepsilon^K)+\int_0^1\left(\int_0^{t} \left(\int{\mathbf 1}_{b(V,\varepsilon,t')}
     \, d\mathbb P_{\mathcal V^*|\tau=t}(V)\right)h(t,t')\, dt'\right)\, dt\\
&\le&  \O(\varepsilon^K)+\int_0^1\left(\int\left(\int_0^{t} {\mathbf 1}_{b(V,\varepsilon,t')}
      h(t,t')\, dt'\right)\, d\mathbb P_{\mathcal V^*|\tau=t}(V)\right)\, dt\\
&\le& \nonumber \O(\varepsilon^K),
\end{eqnarray}
the last term of \eqref{peps} being equal to zero according to \eqref{Leb}.
This concludes the proof of Lemma \ref{lemeqx2}. \hfill $\square$

\vspace{0.2cm} 
\noindent It remains now to prove Lemma  \ref{Bessel}. Its proof uses Lemma \ref{Besselsanscond}, so let us start with the proof of the latter. 

\subsection{Proof of Lemma \ref{Besselsanscond}.} 
We first prove the following result:
\begin{lem}
\label{Bessel0}
For every $K>0$ and $M \ge 1$, there exists $C > 0$, such that for all $\varepsilon \in (0,1)$, 
$$
\PP\cro{\exists \ell \in \acc{1, \dots, M} \ :\ |Y(\ell \varepsilon)-\varepsilon^{1/4}| > \varepsilon^{1/2} } 
   \le C \, \varepsilon^K. 
$$
\end{lem}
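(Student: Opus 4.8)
The plan is to reduce the uniform (in $\ell \le M$) deviation estimate for $Y(\ell\varepsilon)$ around its starting value $\varepsilon^{1/4}$ to a scaling argument plus a standard tail bound for a single-time marginal of a squared Bessel process of dimension $0$. First I would recall that a squared Bessel process $(Y(y))_{y\ge 0}$ of dimension $0$ started from $a>0$ is a nonnegative supermartingale (in fact a martingale until it hits $0$, since $BESQ^0$ satisfies $dY = 2\sqrt{Y}\, d\beta$), so $\EE[Y(y)] \le a$ for all $y$, and $Y$ eventually is absorbed at $0$. For us $a = \varepsilon^{1/4}$, so on the window $y\in[0,M\varepsilon]$, the ``natural'' fluctuation scale of $Y$ is of order $\sqrt{\text{(diffusion coefficient)}\times\text{(time)}} \sim \sqrt{\varepsilon^{1/4}\cdot M\varepsilon} = \sqrt M\,\varepsilon^{5/8}$, which is much smaller than the threshold $\varepsilon^{1/2}$ we are allowed — that gap is exactly what produces the arbitrarily large power $\varepsilon^K$.

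The key steps, in order, are as follows. (1) Use the Brownian scaling of $BESQ^0$: if $Y$ is $BESQ^0$ started from $\varepsilon^{1/4}$, then $(\varepsilon^{-1/4} Y(\varepsilon^{1/4} s))_{s\ge0}$ is $BESQ^0$ started from $1$; call it $\widetilde Y$. Then $Y(\ell\varepsilon) = \varepsilon^{1/4}\,\widetilde Y(\ell\varepsilon^{3/4})$, and since $\ell\le M$ we have $\ell\varepsilon^{3/4}\le M\varepsilon^{3/4}\to 0$. So the event in question is contained in $\{\sup_{s\le M\varepsilon^{3/4}} |\widetilde Y(s)-1| > \varepsilon^{1/4}\}$. (2) Bound this by a maximal inequality. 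Write $\widetilde Y(s) = 1 + 2\int_0^s \sqrt{\widetilde Y(r)}\, d\beta_r$ up to the hitting time of $0$ (which is irrelevant here since starting from $1$, reaching $0$ already forces $|\widetilde Y - 1|=1 > \varepsilon^{1/4}$). Then $\widetilde Y - 1$ is a continuous local martingale with quadratic variation $4\int_0^s \widetilde Y(r)\, dr$. (3) On the event that the fluctuation stays $\le \varepsilon^{1/4}$ up to the stopping time $\theta := \inf\{s : |\widetilde Y(s)-1|>\varepsilon^{1/4}\}\wedge M\varepsilon^{3/4}$, the quadratic variation at $\theta$ is at most $4(1+\varepsilon^{1/4})M\varepsilon^{3/4} \le 8M\varepsilon^{3/4}$. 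So by the Burkholder–Davis–Gundy inequality, or more simply by the exponential/Gaussian-type tail estimate for continuous martingales with bounded quadratic variation (the Dubins–Schwarz time-change represents the stopped martingale as a Brownian motion run for time $\le 8M\varepsilon^{3/4}$), $\PP(\sup_{s\le\theta}|\widetilde Y(s)-1| > \varepsilon^{1/4})$ is bounded by $\PP(\sup_{u\le 8M\varepsilon^{3/4}} |\beta_u| > \varepsilon^{1/4}) \le C\exp(-\varepsilon^{1/2}/(c\,\varepsilon^{3/4})) = C\exp(-c'\varepsilon^{-1/4})$, which is $\O(\varepsilon^K)$ for every $K$. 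A union bound over $\ell\in\{1,\dots,M\}$ (or just noting the event is already a supremum) costs only a factor $M$.

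The main obstacle — really a bookkeeping point rather than a deep one — is handling the absorption at $0$ cleanly: $BESQ^0$ is only a martingale up to its hitting time of the origin, and the stochastic-integral representation $dY = 2\sqrt Y\, d\beta$ degenerates there. The clean fix is to stop at $\theta$ as above before anything bad happens: as soon as $Y$ (equivalently $\widetilde Y$) moves by more than $\varepsilon^{1/4}$ in either direction we are already inside the event we want to bound, so we may freely assume $\widetilde Y$ stays in $[1-\varepsilon^{1/4}, 1+\varepsilon^{1/4}]\subseteq [1/2, 2]$ up to $\theta$, on which interval $\sqrt{\cdot}$ is smooth and bounded and the martingale property and the quadratic-variation bound are unproblematic. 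Alternatively one can invoke directly the explicit density of $Y(y)$ (a mixture of a point mass at $0$ and a Gamma-type density) and integrate the tail, but the time-change argument is shorter and makes the uniformity in $\ell$ transparent. With Lemma \ref{Bessel0} in hand, the statement of the current lemma (which, in the numbering of the excerpt, is Lemma \ref{Bessel0} itself) is exactly what has been proved, so nothing further is needed.
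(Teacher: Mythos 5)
Your argument is correct, but it takes a genuinely different route from the paper's. The paper also starts from the SDE $Y(y)=\varepsilon^{1/4}+2\int_0^y\sqrt{Y(u)}\,d\beta_u$, but then obtains the maximal estimate by a purely moment-based argument: Markov's inequality at a power of order $8K$, the Burkholder--Davis--Gundy inequality to reduce the problem to bounding $\varepsilon^{-4K}\,\EE\big[\big(\int_0^{M\varepsilon}Y(u)\,du\big)^{4K}\big]$, and then the stochastic domination of $Y$ by the square of a one-dimensional Brownian motion started from $\varepsilon^{1/8}$ to control $\EE[Y(u)^{4K}]$, which yields exactly the power $\varepsilon^{K}$ with constants depending on $K$ and $M$. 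You instead rescale to a squared Bessel process of dimension $0$ started from $1$, stop it when its deviation reaches $\varepsilon^{1/4}$ so that the quadratic variation of the stopped martingale is deterministically bounded by $8M\varepsilon^{3/4}$, and conclude via Dubins--Schwarz (or the exponential martingale inequality) with a Gaussian tail, getting the stronger stretched-exponential bound $C\exp(-c\,\varepsilon^{-1/4})$, which dominates $\varepsilon^{K}$ for every $K$ at once; your treatment of the absorption at $0$ through the stopping time (hitting $0$ from $1$ already forces a deviation of size $1>\varepsilon^{1/4}$) is sound, and since the SDE representation of $\mathrm{BESQ}^0$ in fact holds globally, that precaution is harmless. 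The trade-off: your route gives a single super-polynomial estimate uniform in $K$ and makes the scale separation (fluctuations of order $\varepsilon^{5/8}$ against the threshold $\varepsilon^{1/2}$) transparent, while the paper's argument avoids any time change or exponential inequality, relying only on BDG and elementary moment computations, at the price of exponents and constants tuned to the target power $K$.
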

\begin{proof} 
 Recall that  
$Y$ is solution of the stochastic 
differential equation 
$$Y(y) = \varepsilon^{1/4} + 2\int_0^y \sqrt{Y(u)}\, d\beta_u \quad 
\textrm{for all }y\ge 0,$$
where $\beta$ is a Brownian motion (see \cite{RY} Ch. XI). In particular, 
$Y$ is stochastically dominated by the square of a one-dimensional Brownian motion starting from $\varepsilon^{1/8}$. 
Then it follows that, for some constant $C>0$, whose value may change from line to line, but depending only on $K$ and $M$,   
\begin{eqnarray*}
 && \PP\cro{\exists \ell \in \acc{1, \dots, M} \ :\  |Y(l\varepsilon)-\varepsilon^{1/4}| > \varepsilon^{1/2} }
\\ 
& & \hspace*{1cm} 
\le \PP\cro{\sup_{s \le M\varepsilon} |Y(s)-\varepsilon^{1/4}|> \varepsilon^{1/2} } \\ 
& & \hspace*{1cm} \le   C\, \varepsilon^{-4K}\, \EE\cro{\left(\int_0^{M \varepsilon} Y(u) \, du\right)^{4K} } \mbox{ by the Burkholder-Davis-Gundy inequality, }
\\
& & \hspace*{1cm} \le  C\,  \varepsilon^{-1} \int_0^{M \varepsilon} \EE\cro{Y(u)^{4K}} \, du 
\le  C\, \varepsilon^{-1} \int_0^{M \varepsilon} \EE\cro{ (\varepsilon^{1/8} + B_u)^{8K}} \, du 
\le C\, \varepsilon^{K},  
\end{eqnarray*}
with $B$ some standard Brownian motion. This concludes the proof of the lemma. 
\end{proof} 
\noindent We continue now the proof of Lemma \ref{Besselsanscond}.
Set 
$$\B_{\infty}(\varepsilon^{1/4}, \varepsilon^{1/2}) := \acc{(y_1,\dots,y_M) \in \RR^M\ :\   
\va{y_\ell - \varepsilon^{1/4}} \le  \varepsilon^{1/2} \quad \forall \ell \in \acc{1,\dots,M}}.$$
Lemma \ref{Bessel0} shows that for any $V$ of dimension at most $k-1$,
\begin{eqnarray}
\label{sanscond.1}
 \PP\cro{(Y(\varepsilon),\dots,Y(M\varepsilon))  \in V_{\varepsilon}} 
 \le   \PP\cro{ (Y(\varepsilon),\dots,Y(M\varepsilon))\in \B_{\infty}(\varepsilon^{1/4}, \varepsilon^{1/2}) \cap V_{\varepsilon}}+ C\, \varepsilon^K \, .
\end{eqnarray}
Next observe that   
 $B_\infty(\varepsilon^{1/4}, \varepsilon^{1/2})\cap V_\varepsilon$, can be covered by $\O(\varepsilon^{-(k-1)/2})$ balls of radius $\varepsilon$. 
It follows that  
\begin{eqnarray}
\label{sanscond.2}
\nonumber &&  \PP\cro{ (Y(\varepsilon),\dots,Y(M\varepsilon))\in \B_{\infty}(\varepsilon^{1/4}, \varepsilon^{1/2}) \cap V_{\varepsilon}} \\ 
&& \hspace{1cm} \le C\, \varepsilon^{-(k-1)/2}\, \sup_{x \in \B_{\infty}(\varepsilon^{1/4}, \varepsilon^{1/2})}
\PP\cro{ (Y(\varepsilon),\dots,Y(M\varepsilon))\in \B_{\infty}(x,\varepsilon)} \, .
\end{eqnarray}
Now for $y>0$, denote by $Y_y$ a squared Bessel process with dimension $0$ starting from $y$. An explicit expression of its semigroup is 
given just after Corollary (1.4) p.441 in \cite{RY}. 
In particular when $y>\varepsilon^{1/4}/2$, the law of $Y_y(\varepsilon)$ is the sum of a Dirac mass at $0$ with some negligible weight 
and of a measure with density 
$$z\mapsto q_\varepsilon(y,z) := (2\varepsilon)^{-1} \sqrt{\frac y z} 
 \exp\pare{- \frac{y+z}{2\varepsilon}} I_1\left(\frac{\sqrt{yz}}{\varepsilon}\right),$$
where $I_1$ is the modified Bessel function of index $1$. 
Moreover it is known (see (5.10.22) or (5.11.10) in \cite{L}), that 
$I_1(z) = \O(e^z/\sqrt z)$, as $z\to \infty$. 
Thus
$$\sup_{\va{y-\varepsilon^{1/4}} \le \varepsilon^{1/2} } \, 
\,  \sup_{\va{ z-\varepsilon^{1/4}} \le \varepsilon^{1/2}} 
q_{\varepsilon}(y,z) = \O(\varepsilon^{-3/8}).$$
It follows that 
$$\sup_{\va{x-\varepsilon^{1/4}} \le \varepsilon^{1/2} } \, 
\,  \sup_{\va{y-\varepsilon^{1/4}} \le \varepsilon^{1/2}} 
\PP\left[ |Y_y(\varepsilon)-x| \le \varepsilon\right]  = \O(\varepsilon^{5/8}).$$
Then by using the Markov property and Lemma \ref{Bessel0}, we get by induction 
\begin{eqnarray}
\label{sanscond.3}
\sup_{x \in \B_{\infty}(\varepsilon^{1/4}, \varepsilon^{1/2})}
\PP\cro{ (Y(\varepsilon),\dots,Y(M\varepsilon))\in \B_{\infty}(x,\varepsilon)} \le C\, \varepsilon^{5M/8}.
\end{eqnarray}
Since all the constants in our estimates are uniform in $V$, 
Lemma \ref{Besselsanscond} follows from \eqref{sanscond.1}, \eqref{sanscond.2} and \eqref{sanscond.3}. \hfill $\square$

\subsection{Proof of Lemma \ref{Bessel}.\/}
Let $K>0$ be given. 
Lemma \ref{Bessel0} shows in particular that   
$$
\PP\left[|Y(M \varepsilon)-\varepsilon^{1/4}|
> \varepsilon^{1/2} \right] = \O(\varepsilon^{K}).
$$
Next, recall that $Y$ is stochastically dominated by the square of a one-dimensional 
Brownian motion starting from $\varepsilon^{1/8 }$. It follows that  
$$\PP\left(\int_0^{M\varepsilon} Y(y)\, dy \ge \varepsilon\right) = \O(\varepsilon^K),$$
and this already proves the first part of the lemma.

\noindent It remains to prove the second part. We deduce  it from Lemma \ref{Besselsanscond}.
To simplify notation,  from now on we will denote  the integral of $Y$ on $[0,\infty)$  by $\int_0^\infty Y$. 
Likewise $\int_0^{M \varepsilon} Y$ and $\int_{M \varepsilon}^\infty Y$ 
will have analogous meanings. For any affine subspace $V\subseteq \RR^M$ of dimension at most 
$k-1$, set 
$$A'_\varepsilon(V) := \left\{ (Y(\varepsilon),\dots,Y(M\varepsilon)) \in V_\varepsilon\right\}
   \cap A_\varepsilon.$$
Then for any nonnegative bounded measurable
function $\phi$  
supported on $[\varepsilon^{3/4},\infty)$, we can write 
\begin{eqnarray*}
\EE \cro{\phi\pare{\int_0^\infty Y} \PP\cro{A'_\varepsilon(V) \, \Big| \, \int_0^\infty Y}}
& = & \EE \cro{\phi\pare{\int_0^\infty Y} ,\  A'_\varepsilon(V)}
\\
& = & \EE \cro{\phi \pare{\int_0^{M \varepsilon} \ Y + \int_{M \varepsilon}^\infty Y},\ A'_{\varepsilon}(V)}.
\end{eqnarray*}
Now we recall that if $Y_y$ denotes a squared Bessel process of dimension $0$ starting from some $y>0$, then 
$\int_0^\infty Y_y$ is equal in law to the first hitting time of $y/2$ by some Brownian motion, and thus has density given by 
\begin{equation}
\label{densitetau'}
f_y(t) := \frac y2 (2\pi t^3)^{-1/2} \exp (-(y/2)^2/2t)\quad \mbox{for all }t>0\mbox{ and }y> 0, 
\end{equation} 
see for instance \cite{RY} p.107. In particular 
$$\sup_{t \ge \varepsilon^{3/4}}\, \sup_{t' \le \varepsilon}\, \sup_{\va{y-\varepsilon^{1/4}} \le \varepsilon^{1/2}}\, \frac{f_{y}(t-t')}{f_{\varepsilon^{1/4}}(t)}<\infty.$$
Then by using the Markov property and Lemma \ref{Besselsanscond}, we get
\begin{eqnarray*}
 \EE \cro{\phi\pare{\int_0^\infty Y} \PP\cro{A'_\varepsilon(V) \, \Big| \, \int_0^\infty Y}}
&=&  \EE \cro{ \int_{\varepsilon^{3/4}}^\infty \phi(t)  \, f_{Y(M\varepsilon)}\pare{t-\int_0^{M\varepsilon} Y}
\, dt,\ A'_{\varepsilon}(V)} \\
& \le & C \,  \PP \cro{A'_{\varepsilon}(V) }  
\EE\cro{\phi \pare{\int_0^{\infty} Y}}
\\
& \le & C\, \varepsilon^{(5M-4(k-1))/8}\, \EE\cro{\phi \pare{\int_0^{\infty} Y}} \, .
\end{eqnarray*}
Since this holds for any $\phi$, this proves the second part of Lemma \ref{Bessel}, as wanted.    
\hfill $\square$


\section{Proof of Theorem \ref{thmLT} and Corollary \ref{cor2}} 
\label{secLT}
We start with the proof of Theorem \ref{thmLT}. 
We follow the general strategy which is used in the case of the Brownian motion, as for instance in Le Gall's course \cite[Chapter 2]{LG}.

\noindent Consider the regularizing function 
$$p_\varepsilon(y):= \frac 1{\sqrt{2\pi \varepsilon}} \exp\left(-\frac {y^2}{2\varepsilon}\right) \quad \varepsilon >0 \quad y\in \RR,$$
and recall that by Fourier inversion
$$p_\varepsilon(y)= \frac 1{2\pi} \int_\RR \exp(iy\xi - \frac 12 \varepsilon |\xi|^2)\, d\xi.$$
Define then for all $\varepsilon \in (0,1]$, $t>0$, and $x\in \RR$, 
$$\L(\varepsilon,t,x) := \int_0^t p_\varepsilon (\Delta_s-x)\, ds. $$ 
As explained in \cite{LG}, it suffices to control the three terms: 
$$\EE\left[\left(\L(\varepsilon,t,x)-\L(\varepsilon,t,x')\right)^{2p}\right],\  
\EE\left[\left(\L(\varepsilon,t,x)-\L(\varepsilon',t,x)\right)^{2p}\right],\ \EE\left[\left(\L(\varepsilon,t,x)-\L(\varepsilon,t',x)\right)^{2p}\right].$$
For the first term, some elementary computation shows that  
\begin{eqnarray} 
\label{1terme}
\nonumber && \EE\left[\left(\L(\varepsilon,t,x)-\L(\varepsilon,t,x')\right)^{2p}\right] \\ 
&\le &  c_p \int_{\RR^{2p}} d\xi_1\dots d\xi_{2p} \prod_{j=1}^{2p} \va{e^{-ix\xi_j}-e^{-ix'\xi_j}}
 \, \int_{\Xi_p} ds_1\dots ds_{2p} \, 
\va{\EE\left[e^{i\sum \xi_j \Delta_{s_j}} \right]}  \, ,
\end{eqnarray}
with $c_p$ some positive constant (whose value may change in the following lines) and $\Xi_p:=\{s_1\le \dots \le s_{2p}\le t\}$. 
We use next that for any $\gamma \in (0,1]$, and any $y$, $y'\in \RR$, 
$$\left|e^{iy}-e^{iy'}\right|\le c\, |y-y'|^\gamma,$$
for some constant $c>0$. Moreover, if $\eta_j=\xi_j+ \dots +\xi_{2p}$, and $t_j=s_{j}-s_{j-1}$, for all $j\ge 1$ (with the convention $s_{0}=0$), then 
$$ \EE\left[e^{i\sum \xi_j \Delta_{s_j}} \right] = \EE\left[ e^{-\frac 12 \sum_{i,j} \eta_i\eta_j \langle L^{(i)}_{t_i},L^{(j)}_{t_j}\rangle}\right]
=\EE\left[ e^{-\frac 12 \langle \widetilde M_{t_1,\dots,t_{2p}} \eta,\eta \rangle}\right],$$
with $\eta=(\eta_1,\dots,\eta_{2p})$.  
Therefore a change of variables in \eqref{1terme} gives 
\begin{eqnarray*} 
&& \EE\left[\left(\L(\varepsilon,t,x)-\L(\varepsilon,t,x')\right)^{2p}\right] \\ 
&\le &  c_p\, |x-x'|^{2\gamma p}\, \int_{\RR^{2p}} d\eta \, \prod_{j=1}^{2p} |\eta_{j+1}-\eta_{j}|^\gamma\,
\left(\int_{[0,t]^{2p}}   dt_1\dots dt_{2p} \, \, 
\EE\left[e^{-\frac 12 \langle \widetilde M_{t_1,\dots,t_{2p}} \eta,\eta\rangle}
\right]\,\right), 
\end{eqnarray*}
with the convention $\eta_{2p+1}=0$. Now we make another change of variables: $(\eta_1,\dots,\eta_{2p})\to (\eta_1/t_1^{3/4},\dots,\eta_{2p}/t_{2p}^{3/4})$. 
Then we fix some $T>0$, and by using also that for all $j$, and $t\le T$, 
$$|t_{j+1}^{-3/4}\eta_{j+1}-t_{j}^{-3/4}\eta_{j}|\le c\, \max(t_{j+1}^{-3/4},t_{j}^{-3/4})\, |\eta| 
	\le c\, t_{j+1}^{-3/4} \, t_{j}^{-3/4} T^{3/4} \, |\eta| 	,$$
for some constant $c>0$, we get for all $t\le T$,  
\begin{eqnarray*} 
&&\EE\left[\left(\L(\varepsilon,t,x)-\L(\varepsilon,t,x')\right)^{2p}\right] \\ 
&\le &  c_{p,T} \, |x-x'|^{2\gamma p}\, \int_{[0,t]^{2p}}  dt_1\dots dt_{2p}
\left(\prod_{j=1}^{2p} t_j^{-3/4(1+2\gamma )} \right)  \int_{\RR^{2p}} \, d\eta\, 
\EE\left[e^{-\frac 12 \langle \overline M_{t_1,\dots,t_{2p}} \eta,\eta\rangle}
\right]\,  |\eta|^{2\gamma p}\, ,
\end{eqnarray*} 
for some constant $c_{p,T}>0$. 
Now Proposition \ref{proplambda} shows that all moments of 
$1/\overline \lambda_{t_1,\dots,t_{2p}}$ are bounded by positive constants, uniformly in $(t_1,\dots,t_{2p})$. 
Therefore by using that for all $\eta$, 
$$\langle \overline M_{t_1,\dots,t_{2p}}\eta,\eta\rangle\, \ge \overline \lambda_{t_1,\dots,t_{2p}}\, |\eta|^2,$$ 
and the change of variables $\eta\to \eta/(\overline \lambda_{t_1,\dots,t_{2p}})^{1/2}$, we get for all $\gamma<1/6$, 
$$\EE\left[\left(\L(\varepsilon,t,x)-\L(\varepsilon,t,x')\right)^{2p}\right] 
\le  c_{p,T} \, |x-x'|^{2\gamma p},$$
for all $t\le T$.

\noindent A similar computation yields to an analogous estimate for the second term, except that 
this time we need to choose $\gamma<1/12$: for all $p \ge 1$, $T>0$, and $\gamma<1/12$, there exists some constant $c'_{p,T} >0$, 
such that for all $x \in \RR$, all $\varepsilon, \varepsilon'   >0$, and all $t\le T$, 
$$\EE \cro{\pare{ \L(\varepsilon,t,x) - \L(\varepsilon',t,x)}^{2p}}
 \le c'_{p,T} \va{\varepsilon-\varepsilon'}^{2p\gamma} \, .
 $$
Now the estimate of the last term is easier. After some calculation and by using Theorem \ref{theoMk}, we get for $t<t'$, 
\begin{eqnarray*} 
\EE\left[\left(\L(\varepsilon,t,x)-\L(\varepsilon,t',x)\right)^{2p}\right] 
\le   c_p\, \int_{t\le s_1\le \dots\le s_{2p}\le t'}\, \frac{ds_1\dots ds_{2p}}{\prod(s_j- s_{j-1})^{3/4}},
\end{eqnarray*}
which shows that 
$$\EE\left[\left(\L(\varepsilon,t,x)-\L(\varepsilon,t',x)\right)^{2p}\right] \le c_p\, |t'-t|^{p/2}.$$
Then Part (i) and (ii) in Theorem \ref{thmLT} follow from Kolmogorov's criterion (see \cite{LG} for details). 
For (iii), first observe that (ii) implies that a.s. for any $t>0$ and $x\in \RR$, 
$$\L_t(x) = \lim_{\varepsilon \to 0} \, \frac 1{2\varepsilon} \int_0^t {\bf 1}_{\{\Delta_s \in  [x-\varepsilon,x+\varepsilon] \}}\, ds.$$
Then (iii) immediately follows from this equation and the property of self similarity of $\Delta$. 
For (iv), we can observe that by using the above computations and the dominated convergence theorem, we get 
$$\EE[\L_t(x)^k] \, =\, \lim_{\varepsilon \to 0} \, \EE[\L(\varepsilon,t,x)^k].$$
Part (iv) follows. Part (v) is immediate, and was already observed in \cite{YX}.

\noindent Concerning Corollary \ref{cor2}, the upper bound was already proved in \cite{YX} and \cite{K} and was considered there as the easiest part. 
So we only care about the lower bound here. 
For this we can use Frostman's Lemma together with Theorem \ref{thmLT}, which directly proves the result (see \cite{LG} for instance).

\section{Proof of Theorem \ref{thmTLL}} 
\label{secTLL}
In most of this section, $t_1,\dots,t_k$, are fixed positive reals. 
Moreover, by convention a function $f(n_1,\dots,n_k)$ is said to be a $o_k(g(n))$, for some function $g$,  
if it converges to $0$ after multiplication by $1/g(n)$, 
when $n\to \infty$ and $n_i/n \to t_i$ for all 
$i\ge 1$. Analogous convention is used for the notation $\O_k(g(n))$.

\noindent Recall that $(S_m,m\ge 0)$ denotes the random walk. 
For every $i=1,...,k$, let $(N^{(i)}_m(x),1\le m \le n_i, x\in \ZZ)$, 
be the local time process of $\left(S^{(i)}_m:=S_{n_1+...+n_{i-1}+m},0\le m\le n_i-1\right)$. 
In other words, 
\begin{eqnarray*}
N_m ^{(i)}(x)&:=&\#\{k=0,...,m-1\ :\ S_{n_1+...+n_{i-1}+k}=x\}\\
&=&N_{n_1+...+n_{i-1}+m}(x)- N_{n_1+...+n_{i-1}}(x),
\end{eqnarray*}
for all $i\le k$. Set also 
$$D_{n_1,\dots,n_k}:= \det\left(\langle N^{(i)}_{n_i},N^{(j)}
_{n_j}\rangle\right)_{1\le i,j\le k},$$
where here $\langle\cdot,\cdot\rangle$ denotes the usual 
scalar product on $\ell_2(\ZZ)$.

\subsection{Inverse Fourier transform and a periodicity issue.}
The first step in local limit theorems is often the use of Fourier inverse transform. 
This is essentially the content of the next lemma. Before stating it, let us introduce 
some new notation. 
Recall that $\varphi_\xi$ denotes the characteristic function of $\xi_0$. Let now 
$\varphi_{n_1,\dots,n_k}$ be the characteristic function of 
$(Z_{n_1+\dots+n_i}-Z_{n_1+\dots+n_{i-1}})_{i=1,...,k}$. 
Since $(\xi_y)_{y\in\mathbb Z}$ is a sequence of i.i.d. random
variables, which is independent of $S$, we have
for all $(\theta_1,\dots,\theta_k) \in \RR^k$,
\begin{eqnarray}
\label{phini}
\nonumber \varphi_{n_1,\dots,n_k}(\theta_1,\dots,\theta_k)&:=& 
\EE\left[\prod_{y\in \ZZ}\varphi_\xi\left( \sum_{j=1}^k\theta_j(N_{n_1+...+n_j}(y)
- N_{n_1+...+n_{j-1}}(y))\right)\right]\\
&=& \EE \left[\prod_{y\in \ZZ}
  \varphi_{\xi}\left( \sum_{j=1}^k\theta_jN_{n_j}^{(j)}(y)\right)\right].
\end{eqnarray}
We can now state the announced lemma.
\begin{lem}
\label{formule1}
If $n_i\in d_0\ZZ$ for all $i\le k$, then 
$$
\PP\left(Z_{n_1}=\dots=Z_{n_1+\dots+n_k}=0\right) =  
\left(\frac {d}{2\pi}\right)^k 
   \int_{[-\frac{\pi}d,\frac\pi d]^k} \varphi_{n_1,\dots,n_k}(\theta_1,\dots,\theta_k)
   \, d\theta_1\dots d\theta_k.
$$
Otherwise $ \PP(Z_{n_1}=\dots=Z_{n_1+\dots+n_k}=0)=0$. 
\end{lem}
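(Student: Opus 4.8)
The plan is to express the return probability as an integral of the characteristic function over a suitable torus, handling the periodicity carefully. First I would recall the standard Fourier inversion formula: for a $\ZZ^k$-valued random vector $W=(W_1,\dots,W_k)$ with characteristic function $\phi_W$, one has
$$\PP(W=0) = \left(\frac{1}{2\pi}\right)^k \int_{[-\pi,\pi]^k} \phi_W(\theta)\, d\theta.$$
Apply this to $W_i = Z_{n_1+\dots+n_i} - Z_{n_1+\dots+n_{i-1}}$, which is indeed $\ZZ$-valued for each $i$. Since the event $\{Z_{n_1}=\dots=Z_{n_1+\dots+n_k}=0\}$ is exactly $\{W_1=\dots=W_k=0\}$ (by telescoping), this immediately gives a first formula with integration over $[-\pi,\pi]^k$ and integrand $\varphi_{n_1,\dots,n_k}$.

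The second step is to reduce the domain from $[-\pi,\pi]^k$ to $[-\pi/d,\pi/d]^k$ and produce the factor $(d/2\pi)^k$, using the $(2\pi/d)$-periodicity coming from the lattice structure of $\xi_0$. The key point is the footnote recalled in the excerpt: $\varphi_\xi(u)$ has modulus one exactly on $(2\pi/d)\ZZ$, and for $n\in d_0\ZZ$ one has $\PP(n\xi_0 \in d\ZZ)=1$, i.e. $\varphi_\xi(2\pi/d)^n=1$. Conditionally on the random walk path, $W_i = \sum_y \xi_y N_{n_i}^{(i)}(y)$, and using independence of the $\xi_y$, for any $m\in\ZZ^k$ the quantity $\varphi_{n_1,\dots,n_k}(\theta + (2\pi/d)m)$ differs from $\varphi_{n_1,\dots,n_k}(\theta)$ by the factor
$$\EE\left[\prod_y \varphi_\xi\Big(\sum_j (\theta_j + (2\pi/d)m_j) N_{n_j}^{(j)}(y)\Big) \Big/ \prod_y \varphi_\xi\Big(\sum_j \theta_j N_{n_j}^{(j)}(y)\Big)\right],$$
and since $\sum_j m_j N_{n_j}^{(j)}(y)$ is an integer, each local factor picks up $\varphi_\xi(2\pi/d)$ raised to an integer power; summing these exponents over $y$ gives $\sum_j m_j (n_1+\dots) $-type totals, each a multiple of $d_0$ because each $n_i \in d_0\ZZ$, whence the factor is $1$ and $\varphi_{n_1,\dots,n_k}$ is $(2\pi/d)$-periodic in each variable. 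Partitioning $[-\pi,\pi]^k$ into $d^k$ translates of $[-\pi/d,\pi/d]^k$ then yields the stated formula.

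For the ``otherwise'' case, I would argue that if some $n_i \notin d_0\ZZ$ then $\PP(W_i = 0) = 0$ already, hence the joint probability vanishes. Indeed, conditionally on the path, $W_i = \sum_y \xi_y N_{n_i}^{(i)}(y)$ is a sum of independent lattice variables; when $N_{n_i}^{(i)}$ is not identically zero (which happens unless $n_i=0$) the variable $\xi_{y_0} N_{n_i}^{(i)}(y_0)$ for a visited site $y_0$ is supported on $N_{n_i}^{(i)}(y_0)\cdot(\text{support of }\xi_0)$, and the relation $\PP(N_{n_i}^{(i)}(y_0)\,\xi_0 \in d\ZZ)<1$ holding when $N_{n_i}^{(i)}(y_0)\notin d_0\ZZ$, together with $\sum_y N_{n_i}^{(i)}(y) = n_i \notin d_0\ZZ$, forces $\PP(W_i=0\mid S)=0$. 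I expect the main obstacle to be this last verification: one must check rigorously, via the footnoted characterization of $d_0$, that $\PP(W_i=0\mid S)=0$ whenever $n_i\notin d_0\ZZ$, and more subtly, in the periodicity step, that the integrality of $\sum_j m_j N_{n_j}^{(j)}(y)$ combined with $n_i\in d_0\ZZ$ really does kill the phase factor uniformly in the path --- this is where the precise arithmetic of $d$ and $d_0$ enters and deserves care rather than a one-line dismissal.
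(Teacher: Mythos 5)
Your first part --- Fourier inversion applied to the increments $W_i=Z_{n_1+\dots+n_i}-Z_{n_1+\dots+n_{i-1}}$, then the $(2\pi/d)$-quasi-periodicity of $\varphi_{n_1,\dots,n_k}$ obtained from $\varphi_\xi(u+2\pi m/d)=\varphi_\xi(2\pi/d)^m\varphi_\xi(u)$ and $\sum_y N^{(j)}_{n_j}(y)=n_j$, so that the phase $\varphi_\xi(2\pi/d)^{\sum_j m_j n_j}$ equals $1$ when every $n_j\in d_0\ZZ$ --- is exactly the paper's argument (one cosmetic point: the phase is a deterministic constant pulled out of the expectation; do not present it as an expectation of a ratio of products, whose denominator may vanish). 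Where you genuinely diverge is the ``otherwise'' case. The paper gets it for free from the very same periodicity identity: summing over the $d$ translates in the $j$-th coordinate produces the factor $\sum_{l_j=0}^{d-1}\varphi_\xi(2\pi/d)^{n_j l_j}$, which vanishes because $\varphi_\xi(2\pi/d)^{n_j}$ is a nontrivial $d$-th root of unity when $n_j\notin d_0\ZZ$. You instead argue directly that $W_j\neq 0$ a.s.; this route can work, but not with the justification you sketch: the fact that a single visited site $y_0$ with $N^{(j)}_{n_j}(y_0)\notin d_0\ZZ$ contributes a term a.s. outside $d\ZZ$ does not prevent the total sum from being zero. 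The correct version is the coset argument: since $\{u:|\varphi_\xi(u)|=1\}=(2\pi/d)\ZZ$, there is an integer $a$ with $\xi_0\in a+d\ZZ$ a.s., hence conditionally on the walk $W_j=\sum_y\xi_y N^{(j)}_{n_j}(y)\in n_j a+d\ZZ$ a.s.; by the footnoted characterization, $n_j a\in d\ZZ$ if and only if $n_j\in d_0\ZZ$, so $W_j\neq 0$ a.s. when $n_j\notin d_0\ZZ$, and the joint probability vanishes. With that repair your proof is complete and equivalent in strength; the paper's treatment of this case is slightly more economical, since it needs no structural input beyond the periodicity computation already performed, whereas yours gives the (mildly more informative) pointwise statement that the event $\{W_j=0\}$ itself is null.
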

\begin{proof}
Since $Z$ is $\ZZ$-valued, we immediately get 
\begin{eqnarray*}
{\mathbb P}(Z_{n_1}=\dots=Z_{n_1+\dots+n_k}=0) 
&=& \PP (Z_{n_1}= \dots = Z_{n_1+\dots+n_k} - Z_{n_1+\dots+n_{k-1}}=0)\\
&=&\frac 1{(2\pi)^k}
   \int_{[-\pi, \pi]^k}  \varphi_{n_1,\dots,n_k}(\theta_1,\dots,\theta_k)\, d\theta_1\dots d\theta_k.
\end{eqnarray*}
Notice now that $e^{{2i\pi\xi_0/d}}=\varphi_\xi(2\pi/d)$ almost surely and that
$\varphi_\xi(2\pi/d)^d=1$.
Hence, for any integer $m\ge 0$ and any $u\in \RR$,
$$\varphi_\xi\left(2m\pi/d+u\right)
=\varphi_\xi\left(2\pi/d\right)^m\varphi_\xi(u).$$
We deduce that, for every $(l_1,\dots,l_k)\in \ZZ^k$, we have
\begin{eqnarray*}
& & \varphi_{n_1,\dots,n_k}\left(\theta_1+\frac{2l_1\pi}{d},\dots,\theta_k+\frac{2l_k\pi}{d}\right)
=\EE\left[\prod_{y\in \ZZ}   \varphi_{\xi}\left( \sum_{j=1}^k
\left( \theta_j +\frac{2 l_j\pi} d\right)N_{n_j}^{(j)}(y)\right)\right] \\
&=& \EE\left[\prod_{y\in \ZZ}
  \varphi_\xi(2\pi/d)^{\sum_{j=1}^kl_jN_{n_j}^{(j)}(y)}
  \varphi_\xi\left (\sum_{j=1}^k\theta_jN_{n_j}^{(j)}(y)\right)\right]\\
& =& \varphi_\xi(2\pi/d)^{\sum_{j=1}^kl_jn_j}\ \varphi_{n_1,\dots,n_k}(\theta_1,\dots,\theta_k),
\end{eqnarray*}
since $\sum_y N^{(j)}_{n_j}(y)=n_j$.
But, if $n_j\in d_0\ZZ$ for all $j\le k$, 
then $\varphi_\xi(2\pi/d)^{\sum_{j=1}^kn_jl_j}=1$, for all $(l_1,\dots,l_k)\in \ZZ^k$, 
and the result follows with a change of variables. 
If not, let $j$ be such that $n_j\notin d_0\ZZ$. Then $\varphi_\xi(2\pi/d)^{n_j}$ 
is a nontrivial $d$-th root of unity
and we can write
\begin{eqnarray*} 
\PP(Z_{n_1}=&\dots &=Z_{n_1+\dots+n_k}=0)\ =\   \frac{1}{(2\pi)^k} 
\left(\sum_{l_j=0}^{d-1}\varphi_\xi(2\pi/d)^{n_jl_j}\right) \\ 
 & &\times \int_{[-\pi,\pi]^{k-1}} 
  \left[\int_{-[\frac \pi d ,\frac \pi d]}\varphi_{n_1,\dots,n_k}(\theta_1,\dots,\theta_k)
  \, d\theta_j\right]
  \, d\theta_1\dots d\theta_{j-1}d\theta_{j+1}\dots d\theta_k \\
&=& 0. 
\end{eqnarray*}
This concludes the proof of the lemma. 
\end{proof}

\subsection{A typical behaviour for random walks.}
\label{sec:omega_n} 
We want to argue that typically the simple random walk visits roughly $\sqrt n$ sites before time $n$; 
spends time of order at most $\sqrt n$ on each of them, and that its local time process is H\"older 
continuous of order $1/2$, with a H\"older constant in $\O(n^{1/4})$.   
This is true with high probability if we allow some correction of order $n^\gamma$, with $\gamma>0$. 
This is the content of the next lemma, which can be proved as Lemma 6 in \cite{BFFN} and is standard. 
Set for all $i\le k$, 
$$N_i^*:= \sup_y N_{n_i}^{(i)}(y)\quad \mbox{and}\quad  
R_i := \# \{y\ :\ N_{n_i}^{(i)}(y)>0\}.
$$
\begin{lem}\label{lem:omega_n}
For every $n\ge 1$ and $\gamma > 0$, set 
$\Omega_{n_1,...,n_k}:=\Omega_{n_1,...,n_k}^{(1)}\cap 
        \Omega_{n_1,...,n_k}^{(2)}$, where
$$\Omega_{n_1,...,n_k}^{(1)}:= \left\{R_i 
    \le n_i^{\frac 1 2+\gamma}\quad \forall i\le k\right\},$$
and 
$$\Omega_{n_1,...,n_k}^{(2)}:= \left\{\sup_{y\neq z} 
 \frac{|N_{n_i}^{(i)}(y)-N_{n_i}^{(i)}(z)|}{|y-z|^{1/2}} \le n_i^{\frac 14 +\gamma}\quad \forall i\le k\right\}.$$
Then, for every $p$, $\PP(\Omega_{n_1,\dots,n_k}^c) = o(\min_i n_i^{-p})$.
\end{lem}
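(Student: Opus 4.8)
The plan is to reduce the statement to a property of a single copy of the walk, and then to combine a maximal inequality for $(S_m)$ with a moment bound on the increments of its local time process; this is essentially the argument of Lemma~6 in \cite{BFFN}. Since ranges and local times are invariant under translation of the starting point, each process $(N^{(i)}_m(\cdot),\,m\le n_i)$ has the law of the local time process of $(S_m,\,0\le m\le n_i)$. As $k$ is fixed and $n_i\sim t_i n$, all the quantities $n_i$ are comparable to $n$, so by a union bound over $i\le k$ it suffices to show that, for a single walk $(S_m,\,m\le n)$ and every $p\ge 1$,
\begin{equation*}
\PP\!\left(\max_{m\le n}|S_m|>n^{1/2+\gamma}\right)=o(n^{-p})
\qquad\text{and}\qquad
\PP\!\left(\sup_{y\ne z}\frac{|N_n(y)-N_n(z)|}{|y-z|^{1/2}}>n^{1/4+\gamma}\right)=o(n^{-p}).
\end{equation*}
The first estimate already handles $\Omega^{(1)}_{n_1,\dots,n_k}$: the sites visited before time $n$ lie in $[\min_{m\le n}S_m,\max_{m\le n}S_m]$, so $R_n\le 1+2\max_{m\le n}|S_m|$; moreover on the event $\Omega':=\{\max_{m\le n_i}|S^{(i)}_m-S^{(i)}_0|\le n_i^{1/2+\gamma}\ \forall i\le k\}$, which has probability $1-o(n^{-p})$, the sites visited during the $i$-th stretch all lie in an interval of length $2n_i^{1/2+\gamma}$.

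For the first estimate I would use that $(S_m)$ is a martingale (because $X_1$ is centered), Doob's $L^{2q}$ maximal inequality, and the Marcinkiewicz--Zygmund (or Rosenthal) inequality, which together give $\EE\!\left[\max_{m\le n}|S_m|^{2q}\right]\le C_q\,\EE\!\left[|S_n|^{2q}\right]\le C_q\,n^q$ for every integer $q$, since $X_1$ has finite moments of all orders. Markov's inequality then yields $\PP(\max_{m\le n}|S_m|>n^{1/2+\gamma})\le C_q\,n^{-2q\gamma}$, which is $o(n^{-p})$ as soon as $q>p/(2\gamma)$.

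The heart of the matter is the H\"older estimate, whose main ingredient is the moment bound
\begin{equation*}
\EE\!\left[(N_n(y)-N_n(z))^{2q}\right]\le C_q\,|y-z|^{q}\,n^{q/2}\qquad\text{for all }y,z\in\ZZ\text{ and }q\ge 1 .
\end{equation*}
I would prove this by expanding the left-hand side as a sum over time-ordered $2q$-tuples, applying the Markov property so that each factor becomes a one-step transition probability of $S$, and then using the local limit theorem for $S$ (available because $X_1$ has moments of all orders) to extract the cancellation produced by the signs; this is the discrete analogue of the classical moment bound for Brownian local time that underlies its H\"older-$1/2$ regularity, and it is consistent with the heuristic $N_n(y)\approx\sqrt n\,L_1(y/\sqrt n)$. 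It is carried out in the proof of Lemma~6 of \cite{BFFN}. Granting it, Markov's inequality gives, uniformly in $y\ne z$, $\PP(|N_n(y)-N_n(z)|>n^{1/4+\gamma}|y-z|^{1/2})\le C_q\,n^{-2q\gamma}$. On $\Omega'$ the supremum over all of $\ZZ$ is bounded by the supremum over pairs $y,z$ lying in an interval of length $\O(n^{1/2+\gamma})$ (for a pair straddling the visited range one replaces the outside point by the nearest site with vanishing local time, which only decreases the ratio), so a union bound over the $\O(n^{1+2\gamma})$ relevant pairs yields $\PP\!\left((\Omega^{(2)}_{n_1,\dots,n_k})^c\cap\Omega'\right)\le C_q\,n^{1+2\gamma-2q\gamma}=o(n^{-p})$ once $q$ is large. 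Combining the three estimates and summing over $i\le k$ concludes the proof.

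I expect the moment bound on $N_n(y)-N_n(z)$ to be the only genuinely delicate point: it requires careful bookkeeping of the time-ordered expansion together with the uniform local limit estimates carrying the precise powers. Everything else — the reduction to a single walk, the maximal inequality, the reduction of the spatial supremum to a bounded window, and the final union bounds — is routine.
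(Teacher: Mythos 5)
Your argument is correct and is essentially the paper's own proof: the paper simply refers to Lemma~6 of \cite{BFFN}, whose proof is exactly the scheme you reconstruct (reduction to a single stretch of the walk by the Markov property and translation invariance, Doob plus high-moment bounds for $\max_{m\le n}|S_m|$ to control the range and localize the walk in a window of size $O(n^{1/2+\gamma})$, the moment estimate $\EE[(N_n(y)-N_n(z))^{2q}]\le C_q|y-z|^q n^{q/2}$ via the local limit theorem, and a union bound over the $O(n^{1+2\gamma})$ pairs in the window). The only genuinely delicate ingredient, the local-time increment moment bound, is precisely the one established in \cite{BFFN}, so deferring to it is consistent with what the authors do.
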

\noindent Note that on $\Omega_{n_1,...,n_k}$, for every $i$, we have 
$$N_i^* \le n_i^{\frac 1 2+\gamma} \quad \textrm{and}\quad V_{n_i}^{(i)}:=\sum_y(N_{n_i}^{(i)}(y))^2\le
     n_i^{\frac 32+3\gamma}. $$

%
%
\subsection{Scheme of the proof.}\label{sec:scheme}
We follow roughly the same lines as for the proof of Theorem 1 in \cite{BFFN}. 
However the situation is more complicated here, since we consider multiple times in a non-markovian context.
Moreover, we want upper bounds which are uniform in $n_1,\dots,n_k$, and this also requires some additional care.

\noindent First we have to see that the main contribution in the estimate comes from the integral 
near the origin. Recall in particular the notation from \eqref{phini}. 
\begin{prop}\label{lem:equivalent} 
Let $\eta\in (0,1/8)$ be given. Then, for every $t_1,\dots,t_k\in(0,1)$, we have
\[
  \int_{U(\eta)}
  \varphi_{n_1,\dots,n_k}(\theta_1,\dots,\theta_k) \, d\theta_1\dots d\theta_k 
 = \left(\frac{\sqrt {2\pi}}{\sigma}\right)^k \C_{t_1,\dots,t_k}\, n^{-3k/4} + o_k(n^{-3k/4}),
\]
where $U(\eta):=\{ |\theta_i| \le n_i^{-\frac 12-\eta}\ \forall i\le k\}$. Moreover, for every $\theta\in (0,1)$,
\[
\sup_{n\ge 1}\, \sup_{n^{\theta}\le n_1,\dots,n_k\le n}\, \left(\prod_{i=1}^kn_i^{\frac 34}\right)\, 
 \left| \int_{U(\eta)}
  \varphi_{n_1,\dots,n_k}(\theta_1,\dots,\theta_k) \, d\theta_1\dots d\theta_k 
  \right| <\infty.
\]
\end{prop}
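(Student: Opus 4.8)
The plan is to start from the exact Fourier expression \eqref{phini} for $\varphi_{n_1,\dots,n_k}$ and perform the change of variables $\theta_i = \eta_i / n_i^{3/4}$ (or equivalently work with the rescaled increments $n_i^{-3/4}(Z_{n_1+\dots+n_i}-Z_{n_1+\dots+n_{i-1}})$, which converge jointly to $\sigma$ times independent copies of $\Delta$ by the Kesten--Spitzer theorem and the independence of increments over disjoint time intervals). After this substitution, the domain $U(\eta)$ becomes $\{|\eta_i| \le n_i^{1/4-\eta}\}$, which grows to all of $\RR^k$, and the integrand becomes $\prod_i n_i^{-3/4}$ times $\EE\bigl[\prod_y \varphi_\xi\bigl(\sum_j \eta_j n_j^{-3/4} N_{n_j}^{(j)}(y)\bigr)\bigr]$. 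The goal is to show this last expectation converges pointwise to $\psi_{t_1,\dots,t_k}(\eta) = \EE[\exp(-\tfrac12\langle \widetilde M_{t_1,\dots,t_k}\eta,\eta\rangle)]$ (with $\widetilde M$ built from the local times of $k$ independent Brownian motions), and then to justify passing to the limit under the integral sign, which by the computation in Section \ref{preuve-densite} integrates to $(\sqrt{2\pi})^k\,\widetilde{\mathcal C}_{t_1,\dots,t_k} = (\sqrt{2\pi})^k\,\C_{t_1,\dots,t_k}$.

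For the pointwise convergence, I would use the expansion of the characteristic function $\varphi_\xi(u) = 1 - \tfrac{\sigma^2}{2}u^2 + o(u^2)$ near $0$: on the good event $\Omega_{n_1,\dots,n_k}$ of Lemma \ref{lem:omega_n}, all the arguments $\sum_j \eta_j n_j^{-3/4}N_{n_j}^{(j)}(y)$ are uniformly small (of order $n^{1/4-\eta}\cdot n^{1/2+\gamma}\cdot n^{-3/4} = n^{-\eta+\gamma}$, small once $\gamma < \eta$), so $\log\prod_y \varphi_\xi(\cdots) = -\tfrac{\sigma^2}{2}\sum_y\bigl(\sum_j \eta_j n_j^{-3/4}N_{n_j}^{(j)}(y)\bigr)^2 + o(1)$, and the quadratic form $\sum_y n_i^{-3/4}n_j^{-3/4}\langle N_{n_i}^{(i)},N_{n_j}^{(j)}\rangle$ converges in law to $\langle L_{T_i},L_{T_j}\rangle$ by Proposition \ref{thmconvergencejointe}. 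Combining this with boundedness of the integrand by $1$ and Lemma \ref{lem:omega_n} (which makes $\Omega^c$ negligible) yields $\varphi_{n_1,\dots,n_k}(\eta_1 n_1^{-3/4},\dots) \to \psi_{t_1,\dots,t_k}(\eta)$; a dominated-convergence argument (the limit being integrable, and the pre-limit integrand dominated on $U(\eta)$ by a fixed integrable function via a Gaussian-type tail bound, established exactly as in \cite{BFFN}) gives the first assertion. The second, uniform assertion is obtained by running the same estimates while tracking all constants: the convergence in Proposition \ref{thmconvergencejointe} is replaced by the two-sided bounds and tightness coming from Lemma \ref{lem:omega_n}, yielding an upper bound on $|\int_{U(\eta)}\varphi_{n_1,\dots,n_k}|$ of the form $\prod_i n_i^{-3/4}$ times a constant independent of $(n_1,\dots,n_k)$ in the range $[n^\theta,n]$.

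The main obstacle I anticipate is obtaining a \emph{uniform} integrable domination for the rescaled integrand over the whole family $n^\theta \le n_1,\dots,n_k \le n$. Pointwise convergence is comparatively soft, but to pass to the limit (and to get the uniform bound) one needs, for $\eta$ in the (growing) domain $U(\eta)$ after rescaling, a bound like $|\varphi_{n_1,\dots,n_k}(\eta_1 n_1^{-3/4},\dots)| \le g(\eta)$ with $g\in L^1(\RR^k)$, uniformly in the $n_i$'s. This forces a careful lower bound on the random quadratic form $\sum_y(\sum_j \eta_j n_j^{-3/4}N_{n_j}^{(j)}(y))^2$ — essentially the discrete analogue of Proposition \ref{proplambda} — showing it is bounded below by (a constant times) $|\eta|^2$ except on an event of small enough probability, so that $\EE[\exp(-c\sum_y(\cdots)^2)]$ decays fast enough in $|\eta|$; here one again leans on the event $\Omega_{n_1,\dots,n_k}$ together with a discrete counterpart of the smallest-eigenvalue argument, and one must also handle the region where $\varphi_\xi$ is not close to $1$ (using $|\varphi_\xi(u)|<1$ for $u\notin(2\pi/d)\ZZ$ and the restriction $|\theta_i|\le n_i^{-1/2-\eta}$). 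I expect this part to essentially reproduce, with extra bookkeeping for the multidimensional and uniform aspects, the argument of \cite{BFFN}.
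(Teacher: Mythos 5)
Your overall scheme (rescale $\theta_i=\eta_i n_i^{-3/4}$, replace $\prod_y\varphi_\xi$ by the Gaussian $e^{-\sigma^2Q/2}$ on the good event, identify the limit via Proposition \ref{thmconvergencejointe}, conclude by dominated convergence) is the same as the paper's, and you correctly locate the crux in the uniform integrable domination. But precisely at that point there is a genuine gap: you propose to get the needed lower bound on the random quadratic form $\sum_y\bigl(\sum_j\eta_j n_j^{-3/4}N^{(j)}_{n_j}(y)\bigr)^2\ge c|\eta|^2$ (off an event of probability $O(\varepsilon^L)$, uniformly over $n^\theta\le n_1,\dots,n_k\le n$) by ``leaning on the event $\Omega_{n_1,\dots,n_k}$ together with a discrete counterpart of the smallest-eigenvalue argument'', expecting it to ``essentially reproduce the argument of \cite{BFFN}''. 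Neither source can deliver it. The event $\Omega_{n_1,\dots,n_k}$ of Lemma \ref{lem:omega_n} only constrains the local times from above (range, sup, H\"older constant), and for $k\ge 2$ the danger is not that the $N^{(j)}_{n_j}$ are small but that the profiles are nearly collinear, so the Cauchy--Schwarz-against-the-range trick that works for $k=1$ in \cite{BFFN} does not extend; there simply is no elementary ``discrete smallest-eigenvalue'' estimate available. This is exactly the main new difficulty of the paper: the uniform lower-tail bound $\PP\bigl((\prod_i n_i^{-3/2})D_{n_1,\dots,n_k}\le\varepsilon\bigr)\le C\varepsilon^L$ and the uniform negative moments (Lemma \ref{lem:borne}) are obtained not by a discrete argument but by Borodin's strong approximation, which couples the walk and the Brownian motion so that $\det$ of the normalized discrete Gram matrix differs from $\det\overline M_{t_1,\dots,t_k}$ by $O(n^{-\theta/4+3\gamma k})$, after which Proposition \ref{proplambda} (the Ray--Knight/Bessel machinery behind Theorem \ref{theoMk}) applies. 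Without this transfer your dominating function $g\in L^1(\RR^k)$ is not established, and the same estimate is also needed twice more in your plan: to control the Gaussian-replacement error (the change of variables produces a factor $D_{n_1,\dots,n_k}^{-1/2}$, cf.\ Lemma \ref{lem:gaussian}) and to upgrade the convergence in law of Proposition \ref{thmconvergencejointe} to convergence of $\EE[n^{3k/4}D_{n_1,\dots,n_k}^{-1/2}]$ toward $\C_{t_1,\dots,t_k}$ (uniform integrability, cf.\ Lemma \ref{lem:gaussian2}).

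A secondary, fixable point: after your change of variables the limiting quadratic form is $\langle\overline M_{t_1,\dots,t_k}\eta,\eta\rangle$ (the normalized matrix), not $\langle\widetilde M_{t_1,\dots,t_k}\eta,\eta\rangle$, and the factors $\sigma^{-k}$ and $(t_1\cdots t_k)^{\pm 3/4}$ must be tracked to recover $(\sqrt{2\pi}/\sigma)^k\,\C_{t_1,\dots,t_k}\,n^{-3k/4}$; this is bookkeeping, but as written the constants do not match the statement.
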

\noindent The next two propositions show that the rest of the integral is negligible. 

\begin{prop}\label{sec:step1}
Let $\eta \in (0,1/8)$ be given. Then, for every $t_1,\dots,t_k\in(0,1)$, we have
$$\int_{V(\eta)} 
  |\varphi_{n_1,\dots,n_k}(\theta_1,\dots,\theta_k)|
  \, d\theta_1\dots d\theta_k = o_k(n^{-3k/4})$$
where $V(\eta):=\{|\theta_i| \le n_i^{-\frac 1 2 +\eta}\ \forall i\le k\} \cap \{ 
\exists j\ :\ |\theta_j|\ge n_j^{-\frac 1 2 - \eta} \}$. Moreover, for every $\theta \in (0,1)$, 
$$
  \sup_{n\ge 1}\, \sup_{n^{\theta}\le n_1,\dots,n_k\le n}\, 
  \left(\prod_{i=1}^k n_i^{\frac 34}
\right)\, \int_{V(\eta)} 
 |\varphi_{n_1,\dots,n_k}(\theta_1,\dots,\theta_k)|
 \, d\theta_1\dots d\theta_k <\infty.$$
\end{prop}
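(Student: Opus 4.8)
The plan is to reduce the estimate on $V(\eta)$ to a pointwise bound on the characteristic function $|\varphi_{n_1,\dots,n_k}(\theta)|$, valid on the event $\Omega_{n_1,\dots,n_k}$ of Lemma \ref{lem:omega_n}, and then integrate. First I would split the expectation defining $\varphi_{n_1,\dots,n_k}$ in \eqref{phini} according to whether $\Omega_{n_1,\dots,n_k}$ holds or not. On $\Omega_{n_1,\dots,n_k}^c$ the contribution is negligible: by Lemma \ref{lem:omega_n}, $\PP(\Omega_{n_1,\dots,n_k}^c)=o(\min_i n_i^{-p})$ for every $p$, and since $|\varphi_\xi|\le 1$ the product over $y$ is bounded by $1$, so this piece is $o_k(n^{-3k/4})$ after multiplying by the volume of $V(\eta)$ (which is at most $\prod_i n_i^{-1/2+\eta}$).

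Next, on $\Omega_{n_1,\dots,n_k}$ I would bound $|\varphi_{n_1,\dots,n_k}(\theta)|$ from above. The key mechanism, exactly as in \cite{BFFN}, is that $|\varphi_\xi(u)|\le \exp(-c\, u^2)$ for $u$ in a neighbourhood of $0$ (using that $\xi_0$ is centred with finite variance $\sigma^2$), and more generally $|\varphi_\xi(u)|\le e^{-c}<1$ away from $(2\pi/d)\ZZ$. On $V(\eta)$ the arguments $\sum_j \theta_j N_{n_j}^{(j)}(y)$ are of size at most $\sum_j |\theta_j| N_j^* \le \sum_j n_j^{-1/2+\eta}\, n_j^{1/2+\gamma} = \sum_j n_j^{\eta+\gamma}$, which is only genuinely small if we choose $\gamma$ small enough relative to $\eta$ and restrict attention to the $y$ for which the local times are not too large; so one has to be a little careful and, for the range where the quadratic bound applies, one gets
$$|\varphi_{n_1,\dots,n_k}(\theta)| \le \EE\left[\exp\left(-c\sum_y \left(\sum_{j=1}^k \theta_j N_{n_j}^{(j)}(y)\right)^2\right),\ \Omega_{n_1,\dots,n_k}\right] + o_k(n^{-\infty}).$$
The exponent is $-c\langle \widetilde M_{n_1,\dots,n_k}\theta,\theta\rangle$ in the notation of the earlier sections, where now $\widetilde M$ is built from the random walk local times. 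I would then bound this from below using $\langle \widetilde M\theta,\theta\rangle \ge \sum_j \theta_j^2 V_{n_j}^{(j)} - (\text{cross terms})$ and, crucially, use the hypothesis defining $V(\eta)$: there is an index $j$ with $|\theta_j|\ge n_j^{-1/2-\eta}$, so that coordinate contributes a term which, after accounting for the typical size of $V_{n_j}^{(j)}$, is bounded below by a small but non-trivial power of $n_j$. This forces $|\varphi_{n_1,\dots,n_k}(\theta)|$ to be super-polynomially small in $n$, uniformly over $\theta\in V(\eta)$, which after integration over $V(\eta)$ (a set of polynomially bounded volume) gives both the $o_k(n^{-3k/4})$ statement and the uniform bound over $n^\theta\le n_1,\dots,n_k\le n$.

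The main obstacle, I expect, is handling the cross terms $\theta_i\theta_j\langle N_{n_i}^{(i)},N_{n_j}^{(j)}\rangle$ in the quadratic form: unlike the $k=1$ case, one cannot simply drop them, and on $V(\eta)$ they are not obviously dominated by the diagonal terms since the $\theta_i$ can have very different sizes and the walk pieces overlap spatially. The way around this, as in \cite{BFFN} for the one-time case, is to localise in space — split $\ZZ$ into the sites visited by only one of the $k$ walk-segments and those visited by several — controlling the latter using the self-intersection estimates (which on $\Omega_{n_1,\dots,n_k}$ are polynomially bounded, cf. Proposition \ref{thmconvergencejointe}), and to exploit that the "bad" index $j$ singled out by $V(\eta)$ still contributes through sites visited only by $N^{(j)}$, of which there are $\asymp \sqrt{n_j}$ with local time $\asymp 1$. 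Obtaining the \emph{uniformity} in $n_1,\dots,n_k\in[n^\theta,n]$ requires tracking all constants and powers explicitly, which is the technically delicate part but is routine given the ingredients already assembled.
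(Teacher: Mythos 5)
There are two genuine gaps. First, your central intermediate claim
$$|\varphi_{n_1,\dots,n_k}(\theta)| \le \EE\Big[\exp\Big(-c\sum_y\big(\textstyle\sum_{j}\theta_j N_{n_j}^{(j)}(y)\big)^2\Big),\ \Omega_{n_1,\dots,n_k}\Big] + o_k(n^{-\infty})$$
is not justified, and it is exactly where the difficulty sits. On $V(\eta)$ the arguments $\sum_j\theta_j N_{n_j}^{(j)}(y)$ are typically of size $n^{\eta+\gamma}\to\infty$ (since $N_j^*\asymp n_j^{1/2}$), so the sub\-Gaussian bound \eqref{majorationphi} does not apply at such sites; and because $\xi_0$ is lattice-valued, $|\varphi_\xi|$ returns to $1$ on $(2\pi/d)\ZZ$, so large arguments cannot be charged a fixed factor $e^{-c}<1$ either — they may land arbitrarily close to multiples of $2\pi/d$. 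Saying ``one has to be a little careful'' and then asserting an $o_k(n^{-\infty})$ error is precisely skipping the needed argument. The paper resolves this by introducing the event $H(\theta)$ that all arguments are $\le\varepsilon_0$, and on $H^c$ using the H\"older continuity of the local times (the event $\Omega^{(2)}$ of Lemma \ref{lem:omega_n}): since the H\"older constant of $y\mapsto\sum_j\theta_jN_{n_j}^{(j)}(y)$ is $O(n^{-1/4+\eta+\gamma})$ on $V(\eta)$ and the function vanishes outside the range, it must cross the band $[\varepsilon_0/2,\varepsilon_0]$ at $\gtrsim n^{1/4}$ sites, each contributing a factor $\le e^{-\sigma^2\varepsilon_0^2/16}$, so the product is $\le e^{-cn^{1/4}}$ outside events of probability $o_k(n^{-3k/4})$ (uniformly). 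Nothing in your sketch substitutes for this step.

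Second, your lower bound on the quadratic form is not sound. You propose diagonal terms minus cross terms, localizing to sites ``visited only by $N^{(j)}$, of which there are $\asymp\sqrt{n_j}$ with local time $\asymp 1$''. But the $k$ segments are consecutive pieces of one walk with comparable lengths, and with probability bounded away from zero segment $j$ stays entirely inside the range (an interval of width $\asymp\sqrt{n}$) of a previous segment, so it has no privately visited sites at all; your event fails with non-negligible probability, far too large for an $o_k(n^{-3k/4})$ estimate. Moreover, citing Proposition \ref{thmconvergencejointe} gives only convergence in law and no quantitative, uniform-in-$(n_1,\dots,n_k)$ control. The paper's route is different: on $\widetilde\Omega_{n_1,\dots,n_k}(n^{-\theta_0})$ it bounds the smallest eigenvalue of $(\langle N_{n_i}^{(i)},N_{n_j}^{(j)}\rangle)_{i,j}$ from below by $D_{n_1,\dots,n_k}$ divided by a power of the trace (as at the end of Lemma \ref{lem:gaussian2}), and the probability and negative moments needed there come from Lemma \ref{lem:borne}, i.e.\ from the Borodin coupling with Brownian local times and Proposition \ref{proplambda} (hence Theorem \ref{theoMk}). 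That determinant lower bound is the key imported ingredient, and your sketch neither invokes it nor provides a workable replacement for it.
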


\begin{prop}
\label{proppointes}
Let $\eta\in(0,1/2)$ and $\theta\in (0,1)$ be given. 
Then there exists $c>0$ such that 
$$\sup_{n\ge 1}\, \sup_{n^{\theta}\le n_1,\dots,n_k\le n}\, 
 \int_{\{\exists i\ :\ |\theta_i| > n_i^{-\frac 1 2 +\eta}\}}
  |\varphi_{n_1,\dots,n_k}(\theta_1,\dots,\theta_k)|\, d\theta_1\dots d\theta_k 
=o(e^{-n^c}).$$
\end{prop}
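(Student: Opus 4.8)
The goal is to bound the Fourier integral $\int_{\{\exists i\,:\,|\theta_i|>n_i^{-1/2+\eta}\}}|\varphi_{n_1,\dots,n_k}|$ uniformly over $n_i\in[n^\theta,n]$, showing it is super-polynomially small (in fact $o(e^{-n^c})$). The plan is to exploit the explicit product structure of $\varphi_{n_1,\dots,n_k}$ recalled in \eqref{phini}: on a ``good'' event $\Omega_{n_1,\dots,n_k}$ (Lemma \ref{lem:omega_n}), the random walk and all its pieces $S^{(i)}$ are well-behaved, so we can estimate the modulus of each factor $\varphi_\xi(\cdot)$ pointwise and then integrate; on the complement $\Omega_{n_1,\dots,n_k}^c$, we simply bound the integrand by $1$ and the domain of integration has volume polynomial in $n$, so the contribution is $o(e^{-n^c})$ times a polynomial because $\PP(\Omega_{n_1,\dots,n_k}^c)$ beats every polynomial — but here we need more: we actually need $\PP(\Omega_{n_1,\dots,n_k}^c)=o(e^{-n^c})$, so I would first strengthen (or invoke a strengthened form of) Lemma \ref{lem:omega_n}, using standard exponential concentration estimates for the range and for the H\"older constant of the simple random walk local time, valid because $n_i\ge n^\theta$.

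On the good event, the key is that since $|\varphi_\xi(u)|<1$ for $u\notin (2\pi/d)\ZZ$, and since $\varphi_\xi$ is continuous, there is a constant $\rho>0$ and a neighborhood structure giving $|\varphi_\xi(u)|\le e^{-\rho\, \psi(u)}$ where $\psi$ measures the distance of $u$ to $(2\pi/d)\ZZ$ (for $u$ in a fixed compact set we can take $\psi(u)\asymp \mathrm{dist}(u,(2\pi/d)\ZZ)^2$ using the second moment of $\xi_0$, after restricting the domain modulo periodicity). Hence
\[
|\varphi_{n_1,\dots,n_k}(\theta)| \le \EE\left[\exp\Big(-\rho\sum_{y\in\ZZ}\psi\Big(\textstyle\sum_{j=1}^k\theta_j N_{n_j}^{(j)}(y)\Big)\Big)\,;\,\Omega_{n_1,\dots,n_k}\right] + \PP(\Omega_{n_1,\dots,n_k}^c).
\]
The point is that if some $|\theta_i|>n_i^{-1/2+\eta}$, then on $\Omega_{n_1,\dots,n_k}$ the function $y\mapsto \sum_j\theta_j N_{n_j}^{(j)}(y)$ cannot be everywhere close to $(2\pi/d)\ZZ$: using the H\"older-$1/2$ regularity of each $N_{n_j}^{(j)}$ with constant $\O(n^{1/4+\gamma})$, consecutive values change slowly, while at the site(s) where $N_{n_i}^{(i)}$ is of order $n^{1/2}$ the term $\theta_i N_{n_i}^{(i)}(y)$ is of order $n^{\eta}$, which is large; an elementary pigeonhole/continuity argument then produces a block of $\asymp n^{c'}$ consecutive sites on which $\psi$ is bounded below by a positive constant, giving $\sum_y \psi(\cdots)\gtrsim n^{c'}$ and hence the integrand is $\le e^{-c'' n^{c'}}$ on the good event. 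This is essentially the same mechanism as in the proof of Theorem 1 in \cite{BFFN} (the ``pointes'' estimate), and I would cite that argument, adapting it to keep track of the uniformity in $n_1,\dots,n_k$.

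Finally, integrating this bound: the domain $\{\exists i:|\theta_i|>n_i^{-1/2+\eta}\}$ intersected with the full period box $[-\pi/d,\pi/d]^k$ (we may reduce to this by periodicity of $\varphi_{n_1,\dots,n_k}$ in each variable, shown in the proof of Lemma \ref{formule1}) has finite volume bounded by a constant, so
\[
\int_{\{\exists i:|\theta_i|>n_i^{-1/2+\eta}\}}|\varphi_{n_1,\dots,n_k}(\theta)|\,d\theta \le C\big(e^{-c''n^{c'}}+\PP(\Omega_{n_1,\dots,n_k}^c)\big) = o(e^{-n^c}),
\]
uniformly over $n_i\in[n^\theta,n]$, for a suitable $c>0$. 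The main obstacle, and the step requiring the most care, is the combinatorial/geometric argument on the good event producing a macroscopic block of sites where $\psi$ is bounded below — one must handle separately the indices $i$ with $t_i$ small versus large (as in the proof of Proposition \ref{proplambda}), ensure the chosen site has large local time for the relevant walk piece, and verify that the slow-variation estimate really forces a long block rather than a single bad site; keeping every estimate uniform in $(n_1,\dots,n_k)$ throughout is the other delicate point.
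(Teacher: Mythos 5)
Your reduction to the period box and your treatment of the complement of the good event are fine, but the core step fails. The claim that on the event of Lemma \ref{lem:omega_n} (range $\le n^{1/2+\gamma}$, H\"older constant $\le n^{1/4+\gamma}$) one deterministically gets $\sum_y \psi\bigl(\sum_j\theta_j N^{(j)}_{n_j}(y)\bigr)\gtrsim n^{c'}$ whenever some $|\theta_i|>n_i^{-1/2+\eta}$ is not true on the whole region of integration. Your continuity/pigeonhole mechanism needs the increments $|\theta_j|\,|N^{(j)}_{n_j}(y+1)-N^{(j)}_{n_j}(y)|\lesssim |\theta_j|\,n^{1/4+\gamma}$ to be small compared with the period $2\pi/d$, so it only covers $|\theta_j|\ll n^{-1/4}$; but the domain (intersected with the period box) goes up to $|\theta_i|\le \pi/d$, i.e.\ constants. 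In that high-frequency range no deterministic bound on your good event can hold: for Rademacher scenery ($d=2$), take a path of length $n/2$ traversed twice, so every local time is even; such paths have range of order $\sqrt n$ and local-time H\"older constant of order $n^{1/4}$, hence lie in your good event, and for $\theta$ near $\pi/d=\pi/2$ every factor $|\varphi_\xi(\theta N_n(y))|$ is (close to) $1$, so the integrand is not $\le e^{-c''n^{c'}}$ there. (Your remark that $\theta_i N^{(i)}_{n_i}(y)$ is of order $n^{\eta}$ only says the argument is large, not that it is far from $(2\pi/d)\ZZ$.) With several indices there is the further possibility of cancellations, e.g.\ $\theta_1\simeq-\theta_2$ with $N^{(1)}_{n_1}\simeq N^{(2)}_{n_2}$, which again defeats any argument that is deterministic on a range/H\"older event.

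What saves the statement is that such arithmetically degenerate local-time profiles have probability $o(e^{-n^c})$, and proving this for each $\theta$ in the high-frequency region is exactly the probabilistic ``peaks'' argument of Proposition 10 in \cite{BFFN}. The paper does not reprove it: it invokes that argument verbatim, noting only that for the index $i$ with $|\theta_i|>n_i^{-1/2+\eta}$ the peak construction for $S^{(i)}$ must be carried out conditionally on the local times $N^{(j)}_{n_j}$, $j\ne i$. That argument is an estimate of the probability that too few peaks contribute factors bounded away from $1$, not a continuity estimate on the event of Lemma \ref{lem:omega_n}; your sketch does not supply it, and citing it as ``the same mechanism'' does not close the gap since the mechanism you describe is different and breaks down for $|\theta_i|\gtrsim n^{-1/4}$. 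Your proposed strengthening of Lemma \ref{lem:omega_n} to stretched-exponential bounds and the periodicity reduction are fine, and your continuity argument does handle the sub-range $n_i^{-1/2+\eta}<|\theta_i|\lesssim n^{-1/4-\gamma}$ (modulo the cancellation issue between different pieces), but as written the proof has a genuine gap on the rest of the domain.
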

This last proposition can be proved by using exactly the same argument as in the proof of 
Proposition 10 in \cite{BFFN}. The only difference
is that, if say $|\theta_i|>n_i^{-1/2+\eta}$, 
then after having defined peaks for $S^{(i)}$,  
we need to work also conditionally to all 
$N^{(j)}_{n_j}$, for $j\ne i$. But this does not change anything to the proof.  
Since it would be fastidious to reproduce the argument, we will not prove this proposition 
here and we refer the reader to \cite{BFFN} for details.

\noindent Note that Theorem \ref{thmTLL} readily follows from these propositions and Lemma \ref{formule1}. 

\subsection{Proof of Proposition \ref{lem:equivalent}.}
We will use Borodin's result \cite{Bor} on approximations of 
Brownian local time by random walks local time. 
He proved in particular (see Remark 1.3 in \cite{Bor}) that under some moment 
condition on the random walk, and on a suitable probability space, 
for all $T>0$ and all $\gamma >0$, 
there exist constants $C>0$ and $\delta>0$, such that for all $n\ge 1$,
$$\PP\left(E_n^c\right)\, \le\, C\, n^{-1-\delta},$$
with
$$E_n:=\left\{ \sup_{(t,x)\in [0,T]\times \RR} |N_{[nt]}([\sqrt nx]) - 
\sqrt n L_t(x)| \le Cn^{\frac 14} \ln n,\ \left|B_1-\frac{S_n}{\sqrt{n}}
\right|\le n^{-\frac 14+\gamma}\right\},$$
where $N$ and $L$ are the local time processes, respectively of the 
random walk $S$ and of the Brownian motion $B$. But a careful look at his proof shows actually 
that if the random 
walk increments have finite moments of any order, 
then the above holds for any $\delta>0$ (see Formulas (3.8) 
and (3.9) and Lemma 3.2).

\noindent By using now this result, Lemma \ref{lem:omega_n} and the Markov property of the random walk and of Brownian motion, we deduce the following: 
\begin{lem}
Let $\gamma\in(0,1/4)$ and $k\ge 1$ be given. 
Then for every $n\ge 1$ and every $0\le n_1,...,n_k\le n$, 
it is possible to construct the Brownian motion and the random walk on a suitable probability space, 
such that for all $p>0$, 
$${\mathbb P}\left(F_{n,n_1,\dots,n_k}^c\right)\, =\, \O\left((\min_i n_i)^{-p}\right),$$
where $F_{n,n_1,\dots,n_k}=F_1(n,n_1,\dots,n_k)\cap\dots \cap F_4(n,n_1,\dots,n_k)$, and (with $t_i=n_i/n$ for $i\le k$), 
\begin{eqnarray*} 
F_1(n,n_1,\dots,n_k)&:= & \left\{\sup_{x\in\mathbb R} 
     \left|N_{n_i}^{(i)}(\sqrt{n}x)-\sqrt{n}L_{t_i}^{(i)}(x)\right|
    \le n_i^{\frac 14+\gamma} \quad \forall i\le k\right\},\\
F_2(n,n_1,\dots,n_k)&:= & \left\{\sup\{|x-S_0^{(i)}|\, :\, N_{n_i}^{(i)}(x)\ne 0\}\le t_i^{1/2}n^{\frac 12+\gamma}\quad \forall i\le k\right\},\\
F_3(n,n_1,\dots,n_k)&:=& \left\{ \sup\{|x-B_0^{(i)}|\, :\, L_{t_i}^{(i)}(x)\ne 0\}\le t_i^{1/2}n^{\gamma} \quad \forall i\le k \right\},\\
F_4(n,n_1,\dots,n_k)&:=& \left\{ \sup_kN_{n_i}^{(i)}(k)\le t_i^{1/2} n^{\frac 12 +\gamma}\ \ \ \mbox{and}\ \ \ 
  \sup_x L_{t_i}^{(i)}(x)\le t_i^{1/2}n^\gamma \quad \forall i\le k\right\}.
\end{eqnarray*}
\end{lem}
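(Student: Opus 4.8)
The plan is to realise the coupling on a blockwise basis, letting the Markov property take care of the gluing. Since the increments of $S$ across the $k$ blocks are i.i.d. and $B$ has independent increments, the construction reduces to coupling, \emph{independently} for each $i\le k$, the centred block walk $\bar S^{(i)}_\cdot:=S^{(i)}_\cdot-S^{(i)}_0$ with a standard Brownian motion $\bar B^{(i)}$, and then reconstituting $S$ and $B$ by concatenation, setting $B_{T_{i-1}+t}:=B_{T_{i-1}}+\sqrt{t_i}\,\bar B^{(i)}_{t/t_i}$ and $S_{n_1+\dots+n_{i-1}+m}:=S_{n_1+\dots+n_{i-1}}+\bar S^{(i)}_m$, with $t_i=n_i/n$ as in the statement. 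For each $i$ I would apply Borodin's strong approximation \cite{Bor} to the pair $(\bar S^{(i)},\bar B^{(i)})$ with spatial scaling $\sqrt{n_i}$ over the time interval $[0,1]$. Because $X_1$ has finite moments of all orders, the error probability can be taken $\le C_p\, n_i^{-p}$ for every $p$ (this is precisely the improvement of Borodin's $\delta$ explained just above the lemma), and on the associated good event the sup-norm discrepancy between $\bar N^{(i)}$ and $\sqrt{n_i}\,\bar L^{(i)}$ is at most $C\,n_i^{1/4}\ln n_i$, while $|\bar B^{(i)}_1-\bar S^{(i)}_{n_i}/\sqrt{n_i}|\le n_i^{-1/4+\gamma}$. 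A union bound over $i$ then produces a single good event of complementary probability $\O((\min_i n_i)^{-p})$ on which all four families $F_1,\dots,F_4$ can be verified.

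The event $F_1$ is obtained from this by transferring the scalings: Brownian scaling gives $\sqrt n\,L^{(i)}_{t_i}(x)=\sqrt{n_i}\,\bar L^{(i)}_1\big((x-B^{(i)}_0)/\sqrt{t_i}\big)$ and $N^{(i)}_{n_i}(\sqrt n x)=\bar N^{(i)}_{n_i}(\sqrt n x-S^{(i)}_0)$, so the block-$i$ Borodin estimate contributes the main term $C\,n_i^{1/4}\ln n_i\le n_i^{1/4+\gamma}$, and the remaining errors — the lattice rounding of $\sqrt n x-S^{(i)}_0$, and the offset between the block-boundary values $S^{(i)}_0$ and $\sqrt n\,B^{(i)}_0$ (controlled recursively over the blocks from the endpoint estimates $|\bar B^{(j)}_1-\bar S^{(j)}_{n_j}/\sqrt{n_j}|\le n_j^{-1/4+\gamma}$) — are absorbed using the H\"older continuity of the Brownian local time $\bar L^{(i)}_1$, whose H\"older constant has finite moments of all orders, exactly as in the proof of Lemma \ref{lemeqx2}. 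The events $F_2$ and the random-walk part of $F_4$ contain, block by block, the events $\{R_i\le n_i^{1/2+\gamma}\}$, $\{N_i^*\le n_i^{1/2+\gamma}\}$ and $\{\sup\{|x-S^{(i)}_0|:N^{(i)}_{n_i}(x)\ne0\}\le n_i^{1/2+\gamma}\}$, because $t_i^{1/2}n^{1/2+\gamma}=\sqrt{n_i}\,n^{\gamma}\ge n_i^{1/2+\gamma}$; these have probability $1-\O(n_i^{-p})$ by Lemma \ref{lem:omega_n} and the Doob / Burkholder-type moment bounds behind it. Finally, by Brownian scaling, $F_3$ and the Brownian part of $F_4$ reduce to $\{\sup_{s\le1}|\tilde B_s|\le n^{\gamma}\}$ and $\{\sup_x\tilde L_1(x)\le n^{\gamma}\}$ for a standard Brownian motion $\tilde B$ with local time $\tilde L$, both of which have complementary probability $\O(e^{-c n^{2\gamma}})$ by classical Gaussian tail estimates; since $n\ge\min_j n_j$, this is $\O((\min_j n_j)^{-p})$ for every $p$. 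Collecting the four estimates and summing over $i\le k$ finishes the proof.

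The one genuinely delicate step is $F_1$: keeping the strong-approximation error in the block-adapted form $n_i^{1/4}\ln n_i$ rather than $n^{1/4}\ln n$ (which is what forces the blockwise, rather than global, use of Borodin's theorem), and then absorbing — via the H\"older regularity of the Brownian local time — both the lattice rounding and the misalignment of the block endpoints of $S$ with $\sqrt n B$. The control of $F_2,F_3,F_4$ is routine and follows the scheme of \cite{BFFN} together with Lemma \ref{lem:omega_n}.
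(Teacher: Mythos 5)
Your overall blockwise strategy (independent Borodin couplings of the centred block walks with standard Brownian blocks, reconstruction of $S$ and $B$ by concatenation, union bound over the $k$ blocks, and the routine verification of $F_2$, $F_3$ and $F_4$ via Lemma \ref{lem:omega_n}, scaling and Gaussian tails) is indeed the natural reading of the paper's one-line indication ``Borodin + Lemma \ref{lem:omega_n} + Markov property'', and those parts of your argument are fine. The problem is precisely the step you flag as delicate, the verification of $F_1$: the misalignment of the block anchors cannot be absorbed by the H\"older continuity of the local time if the only control on it is the endpoint estimate you quote from Borodin's event.

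Concretely: your construction gives $|\bar S^{(j)}_{n_j}-\sqrt{n_j}\,\bar B^{(j)}_1|\le n_j^{1/4+\gamma}$ in lattice units, so after concatenation the mismatch between $S^{(i)}_0$ and $\sqrt n\,B^{(i)}_0$ is $D_i\le\sum_{j<i}n_j^{1/4+\gamma}$, which is of order $n^{1/4+\gamma}$ as soon as some earlier block has length of order $n$. Writing $\sqrt n\,L^{(i)}_{t_i}(x)=\sqrt{n_i}\,\bar L^{(i)}_1\left((\sqrt n x-\sqrt n B^{(i)}_0)/\sqrt{n_i}\right)$, absorbing the shift $D_i$ through the H\"older-$(1/2)$ regularity of $\bar L^{(i)}_1$ costs $\sqrt{n_i}\,C_H^{(i)}\,(D_i/\sqrt{n_i})^{1/2}=C_H^{(i)}\,n_i^{1/4}\,D_i^{1/2}\approx C_H^{(i)}\,n_i^{1/4}\,n^{1/8+\gamma/2}$, and $F_1$ requires this to be at most $n_i^{1/4+\gamma}$, i.e. $C_H^{(i)}\le n_i^{\gamma}\,n^{-1/8-\gamma/2}$; since $n_i\le n$ and $\gamma<1/4$ the right-hand side tends to $0$, so the event you need has vanishing probability. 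Worse, the obstruction is not merely a lost estimate: a spatial shift of order $n^{1/4}$ typically changes a local time profile of height $\sqrt{n_i}$ by about $n_i^{1/4}n^{1/8}$ in sup norm, so under your coupling $F_1$ genuinely fails with high probability for every block $i\ge 2$ whenever $\gamma<1/8$. To prove the lemma as stated one needs the walk and the Brownian path to be aligned at the block boundaries to precision $\O(n_i^{2\gamma})$, which the $n_j^{-1/4+\gamma}$ endpoint bound does not provide; one must either invoke a coupling with polylogarithmic path closeness at the block endpoints (a KMT-type construction, with which Borodin's local-time approximation is compatible) and only then use the H\"older absorption, or else prove a re-anchored version of $F_1$ (each Brownian block translated to start at $S^{(i)}_0/\sqrt n$) and add a separate argument, based on the $L^2$-continuity of the profiles under small shifts, showing that the resulting Gram determinant is still the one of a genuine Brownian motion up to an admissible error. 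As written, the key step of your proof does not go through.
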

\noindent The proof of this result is elementary and left to the reader. Define now for all $\varepsilon>0$, the set 
\begin{eqnarray}
 \label{omegatilde}
\widetilde \Omega_{n_1,\dots,n_k}(\varepsilon):=  \left\{\left(\prod_{i=1}^k n_i^{-\frac 32}\right)D_{n_1,...,n_k}\ge\varepsilon\right\}. 
\end{eqnarray}
We then obtain the following: 
\begin{lem}\label{lem:borne}
Let $\theta\in(0,1)$ and $\theta_0\in (0,\theta/4)$ be given.
Then for every $L>0$, we have
$$\sup_{n\ge 1}\, \sup_{n^{\theta}\le n_1,...,n_k\le n} \,
      \sup_{\varepsilon\geq  n^{-\theta_0}}\, \varepsilon^{-L}\, {\mathbb P}\left(\widetilde \Omega_{n_1,\dots,n_k}^c(\varepsilon)\right) <\infty,$$
and for every $p>0$, 
$$\sup_{n\ge 1}\, \sup_{n^{\theta}\le n_1,...,n_k\le n}\, 
      {\mathbb E}\left[
     \left(\prod_{i=1}^k n_i^{\frac {3p}2}\right)\, D_{n_1,...,n_k}^{-p},\ \widetilde \Omega_{n_1,\dots,n_k}(n^{-\theta_0}) 
     \right]<\infty.$$
\end{lem}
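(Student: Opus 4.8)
The plan is to derive the second (moment) estimate from the first (tail) estimate by a routine layer--cake computation, and then to prove the tail estimate, which is the genuinely new input, by reducing it to Proposition \ref{proplambda} through Borodin's strong approximation. For the first reduction, write $W:=\pare{\prod_{i=1}^k n_i^{-3/2}}D_{n_1,\dots,n_k}\ge 0$, so that $\widetilde\Omega_{n_1,\dots,n_k}(\varepsilon)=\{W\ge \varepsilon\}$ and $\pare{\prod n_i^{3p/2}}D_{n_1,\dots,n_k}^{-p}=W^{-p}$ on $\{W>0\}$. Then, by Fubini,
\[
\EE\cro{\pare{\textstyle\prod n_i^{3p/2}}D_{n_1,\dots,n_k}^{-p},\ \widetilde\Omega_{n_1,\dots,n_k}(n^{-\theta_0})}
=\int_0^\infty \PP\pare{W^{-p}\ind_{\{W\ge n^{-\theta_0}\}}>s}\, ds
\le 1+\int_1^{n^{\theta_0 p}} \PP\pare{W< s^{-1/p}}\, ds,
\]
since the integrand vanishes for $s\ge n^{\theta_0 p}$ while for $1\le s< n^{\theta_0 p}$ one has $n^{-\theta_0}\le s^{-1/p}<1$; applying the tail estimate with $L=2p$ bounds the last integral by $C_{2p}\int_1^\infty s^{-2}\, ds<\infty$, uniformly in $n$ and in $n^\theta\le n_i\le n$.

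It remains to show that for every $L>0$ there is $C>0$ with $\PP(\widetilde\Omega^c_{n_1,\dots,n_k}(\varepsilon))\le C\varepsilon^L$ for all $n\ge 1$, $n^\theta\le n_i\le n$ and $\varepsilon\in[n^{-\theta_0},1)$. Since $D_{n_1,\dots,n_k}$ is a Gram determinant, $W=\det\overline{\M}\ge\overline\Lambda^k$, where $\overline{\M}:=\pare{(n_in_j)^{-3/4}\langle N^{(i)}_{n_i},N^{(j)}_{n_j}\rangle}_{i,j}$ is the Gram matrix of the vectors $n_i^{-3/4}N^{(i)}_{n_i}\in\ell^2(\ZZ)$ and its smallest eigenvalue is $\overline\Lambda=\inf_{|u|=1}\nor{\sum_i u_i\, n_i^{-3/4}N^{(i)}_{n_i}}_{\ell^2(\ZZ)}^2$; hence it suffices to prove $\PP(\overline\Lambda\le\delta)\le C\delta^{K}$ for $\delta\ge n^{-\theta_0/k}$, which is the discrete analogue of Proposition \ref{proplambda}. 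I would work on the event $F$ obtained by intersecting the event $F_{n,n_1,\dots,n_k}$ of the previous lemma with the (equally likely) event on which the sup--norms, diameters and H\"older constants of order $\rho$ of the rescaled local times $\ell^{(i)}(\cdot):=t_i^{-1/2}L^{(i)}_{t_i}(\sqrt{t_i}\,\cdot)$ are all $\O(n^\gamma)$, where $t_i=n_i/n$ and $\rho<1/2$ is fixed close to $1/2$; then $\PP(F^c)=\O((\min_i n_i)^{-q})=\O(n^{-\theta q})$ for every $q$, which is $\O(\varepsilon^L)$ (resp.\ $\O(\delta^K)$) with a uniform constant once $q$ is large.

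On $F$, the self--similarity identity $t_i^{-3/4}L^{(i)}_{t_i}(x)=t_i^{-1/4}\ell^{(i)}(x/\sqrt{t_i})$ turns Borodin's approximation into
\[
n_i^{-3/4}N^{(i)}_{n_i}(y)=n_i^{-1/4}\,\ell^{(i)}(y/\sqrt{n_i})+n_i^{-3/4}R^{(i)}(y),\qquad |R^{(i)}(y)|\le n_i^{1/4+\gamma},
\]
with $R^{(i)}$ supported on an interval of length $\O(\sqrt{n_i}\,n^\gamma)$. Inserting this into the variational formula for $\overline\Lambda$ and using the triangle inequality in $\ell^2(\ZZ)$, there are two errors to control, uniformly over $|u|=1$: the $\ell^2$--norm of $\sum_i u_i n_i^{-3/4}R^{(i)}$, which on $F$ is $\O(n^{-\theta/4+c'\gamma})$ for some absolute $c'>0$; and the Riemann--sum error in replacing $n_i^{-1/4}n_j^{-1/4}\sum_y \ell^{(i)}(y/\sqrt{n_i})\ell^{(j)}(y/\sqrt{n_j})$ by $\langle t_i^{-3/4}L^{(i)}_{t_i},t_j^{-3/4}L^{(j)}_{t_j}\rangle$, which on $F$ is $\O(n^{2\gamma}n_i^{-\rho/2})=\O(n^{-\theta\rho/2+2\gamma})$. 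Choosing $\rho$ close to $1/2$ and $\gamma$ small enough that both are $\O(n^{-c})$ for a fixed $c>\theta_0\ (\ge\theta_0/k)$, an elementary manipulation of these two bounds gives, on $F$, $\overline\Lambda^{1/2}\ge(\overline{\lambda}_{t_1,\dots,t_k}-\O(n^{-c}))_+^{1/2}-\O(n^{-c})$, hence $\overline{\lambda}_{t_1,\dots,t_k}\le 2\overline\Lambda+\O(n^{-c})$, so that $F\cap\{\overline\Lambda\le\delta\}\subseteq\{\overline{\lambda}_{t_1,\dots,t_k}\le C_0\delta\}$ whenever $\delta\ge n^{-\theta_0/k}$. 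Since Proposition \ref{proplambda} is uniform over all positive reals $t_1,\dots,t_k$, it yields $\PP(\overline{\lambda}_{t_1,\dots,t_k}\le C_0\delta)\le C_K\delta^K$; combined with $\PP(F^c)=\O(\delta^K)$ and $\PP(\widetilde\Omega^c_{n_1,\dots,n_k}(\varepsilon))\le\PP(\overline\Lambda\le\varepsilon^{1/k})$, taking $K=kL$ gives the required bound.

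The step I expect to be the main obstacle is making all these error estimates uniform over the whole range $n^\theta\le n_i\le n$. The delicate case is $t_i=n_i/n$ of order $n^{\theta-1}$: then $t_i^{-3/4}L^{(i)}_{t_i}$ is strongly concentrated in the physical variable, so both passing to the self--similar variable $y/\sqrt{n_i}$ and using the full H\"older regularity of Brownian local time (order $\rho$ arbitrarily close to $1/2$, not merely $1/4$) are needed to push the Riemann--sum error below the threshold $n^{-\theta_0/k}$; this is precisely where the hypothesis $\theta_0<\theta/4$ enters. A minor point, harmless here because only probabilities are involved, is that the strong--approximation coupling of the previous lemma may be chosen separately for each tuple $(n_1,\dots,n_k)$.
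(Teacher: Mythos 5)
Your proposal is correct, and it rests on the same three pillars as the paper's own proof: Borodin's strong approximation (through the coupling event $F_{n,n_1,\dots,n_k}$ of the preceding lemma), Proposition \ref{proplambda}, and a layer-cake argument to deduce the moment bound from the tail bound (your Fubini computation is the paper's change of variables $t=\varepsilon^{-p}$ in disguise). The one place where you genuinely deviate is the comparison between the discrete and continuous Gram objects. The paper writes the discrete inner product exactly as an integral of the step functions $N^{(i)}_{n_i}(\floor{\sqrt n x})/\sqrt n$, so the coupling bound, being uniform in the real variable $x$, immediately yields the entrywise estimate $\va{A^{(n)}_{i,j}-\A_{i,j}}\le 4\, n^{-\theta/4+3\gamma}$ by Cauchy--Schwarz; it then compares determinants via the permutation expansion and only at the very end uses $\det\ge(\text{smallest eigenvalue})^k$ on the \emph{continuous} matrix $\overline M_{t_1,\dots,t_k}$. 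You instead pass to the smallest eigenvalue of the \emph{discrete} Gram matrix first and compare it with $\overline{\lambda}_{t_1,\dots,t_k}$ through the variational formula; because you only evaluate the Brownian local time at the lattice points $y/\sqrt{n_i}$, you must control a genuine Riemann-sum error, and your observation that this forces H\"older regularity of order $\rho$ close to $1/2$ (exponent $1/4$ would only give an error of order $n^{-\theta/8}$, which the hypothesis $\theta_0<\theta/4$ alone does not beat) is accurate; such regularity, with constants having all moments, is standard, so your route does close, but the paper's step-function representation makes this extra ingredient unnecessary. Your remaining estimates (the $\ell^2$-norm of the coupling error of order $n^{-\theta/4+c'\gamma}$, the reduction $\PP(\widetilde\Omega^c_{n_1,\dots,n_k}(\varepsilon))\le\PP(\overline\Lambda\le\varepsilon^{1/k})$, the absorption of $\PP(F^c)=\O(n^{-\theta q})$ into $\O(\varepsilon^L)$ since $\varepsilon\ge n^{-\theta_0}$) match the paper's. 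In short: same key inputs and same overall architecture, with an eigenvalue comparison replacing the paper's entrywise-plus-determinant comparison; your version avoids the determinant expansion bookkeeping at the cost of an extra H\"older/Riemann-sum step, and care is indeed needed (as you note) for the cross terms when some $t_i$ is as small as $n^{\theta-1}$, where one should measure the mesh at the smaller of the two scales involved.
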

\begin{proof}
Let $\gamma>0$ be such that $\theta_0<(\theta/4)-3\gamma k$ and let $L>0$ be fixed. 
Thanks to the previous lemma we can assume that the Brownian motion $B$ and the random walk $S$ are constructed on a space, where  
$$\PP(F_{n,n_1,\dots,n_k}^c)=\O(n^{-p}),$$
for all $p>0$. Now for all $i,j$, set 
$$A_{i,j}^{(n)}:=(n_in_j)^{-3/4}\sum_yN_{n_i}^{(i)}(y)N_{n_j}^{(j)}(y)\ \ \ \mbox{and}\ \ \ 
     \A_{i,j}:=(t_it_j)^{-3/4}\, \int_{\mathbb R}L_{t_i}^{(i)}(x)L_{t_j}^{(j)}(x)\, dx,$$
with $t_i=n_i/n$ and $t_j=n_j/n$.
First, we rewrite $A_{i,j}^{(n)}$ as follows
$$ A_{i,j}^{(n)}=(t_it_j)^{-\frac 34} \int_{\mathbb R} 
     \frac{N_{n_i}^{(i)}(\floor{\sqrt{n}x})}{\sqrt{n}}
    \frac{N_{n_j}^{(j)}(\floor{\sqrt{n}x})}{\sqrt{n}}\, dx. $$
Observe next that, on $F_{n,n_1,\dots,n_k}$, for all $i,j$ and $n_i,n_j \le n$, we have
\begin{equation}\label{aii}
A_{i,i}^{(n)}\le n^{3\gamma},\qquad  \A_{i,i}\le n^{3\gamma},
\end{equation}
and
$$t_i^{-\frac 32}
  \int_{\mathbb R} \left|\frac{N_{n_i}^{(i)}(\floor{\sqrt{n}x})}{\sqrt{n}}
   - L_{t_i}^{(i)}(x)\right|^2\, dx
   \le t_i^{-\frac 32}2 t_i^{\frac 12}n^\gamma t_i^{\frac 12}n^{-\frac 12+2\gamma}
    \le 2t_i^{-\frac 12}n^{-\frac 12+3\gamma} .$$
Hence, with the use of the Cauchy-Schwartz inequality, we get
\begin{equation}\label{aij}
A_{i,j}^{(n)}\le n^{3\gamma},\quad  \A_{i,j}\le n^{3\gamma}
\quad \mbox{and}\quad
\left| A_{i,j}^{(n)}-\A_{i,j}\right|\le 2\sqrt{2} t_i^{-\frac 14} n^{-\frac 14+3\gamma}
  \le 4 n^{-\frac \theta 4+3\gamma}.
\end{equation}
We use next that 
$$\left(\prod_{i=1}^k n_i^{-\frac 32}\right)D_{n_1,...,n_k}=\det\left((A_{i,j}^{(n)})_{i,j}\right)\quad 
   \mbox{and}\quad \det  \overline M_{t_1,...,t_k}=\det \left(({\mathcal A}_{i,j})_{i,j}\right).$$
Furthermore, for any matrix $M$:
$$\textrm{det}\left((M_{i,j})_{i,j}\right)=\sum_{\sigma\in {\mathcal S}_k}(-1)^{sgn(\sigma)}
      \prod_{i=1}^kM_{i,\sigma(i)},$$
where ${\mathcal S}_k $ is the group of permutations of $\{1,...,k\}$ and
$sgn(\sigma)$ is the signature of $\sigma$.
Therefore, using
(\ref{aij}),
on $F_{n,n_1,\dots,n_k}$, when $n_i\le n$, for all $i\le k$, we get for $n$ large enough,   
\begin{eqnarray*}
\left|\left(\prod_{i=1}^kn_i^{-\frac 32}\right)D_{n_1,...,n_k}
-\det \overline M_{t_1,...,t_k} \right| &\le & 
 \sum_{\sigma\in{\mathcal S}_k}
\sum_{i=1}^k n^{3\gamma(k-1)} 
    \left|A_{i,\sigma(i)}^{(n)}-\A_{i,\sigma(i)}\right| \\
&\le& 4\, (k+1)!\,  n^{3\gamma k}n^{-\frac \theta 4}
 \le  n^{-\theta_0},
\end{eqnarray*}
according to our assumption on $\gamma$.

\noindent Thus, for $n$ large enough, and every $\varepsilon\geq n^{-\theta_0}$, we get by using Proposition \ref{proplambda}
\begin{eqnarray*}
\sup_{n^{\theta}\le n_1,...,n_k\le n}\,{\mathbb P}\left(
    \widetilde \Omega_{n_1,\dots,n_k}^c(\varepsilon)\right)
&\le&
\sup_{n^{\theta}\le n_1,...,n_k\le n} {\mathbb P}(F_{n,n_1,\dots,n_k}^c)+{\mathbb P}\left(
   \det \overline M_{t_1,...,t_k} \le 2\varepsilon\right)\\
  &\le& \O(n^{-\theta_0 L})+{\mathbb P}\left(
   \overline \lambda_{t_1,\dots,t_k}\le (2\varepsilon)^{\frac 1k}\right)\\
  &=&\O\left(\varepsilon^L\right),
\end{eqnarray*}
with $\overline \lambda_{t_1,\dots,t_k}$ as in Proposition \ref{proplambda}. 
So we just proved that for any $L>0$, the constant 
$$ C_L:=\sup_{n\ge 1}\, \sup_{\varepsilon\geq n^{-\theta_0}}\, \varepsilon^{-L} \,
   \sup_{n^{\theta}\le n_1,...,n_k\le n}\, {\mathbb P}\left(
    \widetilde \Omega_{n_1,\dots,n_k}^c(\varepsilon)\right),
$$
is finite, which gives the first part of the lemma. Then we get for any $p>0$, 
\begin{eqnarray*}
&& \sup_{n^{\theta}\le n_1,...,n_k\le n}\,  {\mathbb E}\left[ \left(\prod_{i=1}^k n_i^{\frac {3p}2}\right)D_{n_1,...,n_k}^{-p},\ 
    \widetilde \Omega_{n_1,\dots,n_k}(n^{-\theta_0}) \right]\\
&=& \sup_{n^{\theta}\le n_1,...,n_k\le n}\, \int_0^\infty {\mathbb P}
   \left(n^{-\theta_0}\le 
     \left(\prod_{i=1}^k n_i^{-\frac 32}\right)\, D_{n_1,...,n_k}\le  t^{-1/p}\right)\, dt\\
&=&  \sup_{n^{\theta}\le n_1,...,n_k\le n}\, p\int_{n^{-\theta_0} }^{+\infty}{\mathbb P}
   \left(n^{-\theta_0}\le 
     \left(\prod_{i=1}^k n_i^{-\frac 32}\right)\, D_{n_1,...,n_k}\le \varepsilon \right)\frac{d\varepsilon}{\varepsilon^{p+1}}\\
&\le &p\int_{n^{-\theta_0}}^1  C_{p+1}\, d\varepsilon+p\int_1^{+\infty}\frac
      {d\varepsilon}{\varepsilon^{p+1}}<\infty,
\end{eqnarray*}
where for the third line we have used the change of variables $t= \varepsilon^{-p}$. This concludes the proof of the lemma. 
\end{proof}
\noindent The next step is the 
\begin{lem}\label{lem:gaussian}
Let $\eta\in (0,1/4)$ and $\theta\in (0,1)$ be given. Then 
\begin{eqnarray*}
&& \lim_{n\to \infty}\ \sup_{n^{\theta}\le n_1,...,n_k\le n}\ 
\left(\prod_i n_i^{ 3/4}\right)\\ 
&& \qquad  \times  \int_{U(\eta)}\left| \varphi_{n_1,\dots,n_k}(\theta_1,\dots,\theta_k) - 
 \EE \left[e^{-\sigma^2 Q_{n_1,\dots,n_k}(\theta_1,\dots,\theta_k)/2}  \right]\right|
  \, d\theta_1\dots d\theta_k = 0,
\end{eqnarray*}
where  
$$Q_{n_1,\dots,n_k}(\theta_1,\dots,\theta_k):= \sum_y\left(\theta_1N_{n_1}^{(1)}(y)+\dots +\theta_kN^{(k)}_{n_k}(y)\right)^2.$$
\end{lem}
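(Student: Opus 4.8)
The plan is to exploit the product structure in \eqref{phini} and compare, term by term over the sites $y$, the factor $\varphi_\xi(\cdot)$ with its second-order Taylor approximation $\exp(-\sigma^2(\cdot)^2/2)$. Write $u_y:=\sum_{j=1}^k\theta_j N_{n_j}^{(j)}(y)$, so that $\varphi_{n_1,\dots,n_k}(\theta)=\EE[\prod_y\varphi_\xi(u_y)]$ and the comparison quantity is $\EE[\prod_y e^{-\sigma^2 u_y^2/2}]$. Since $\xi_0$ is centered with finite variance $\sigma^2$, we have $\varphi_\xi(u)=1-\sigma^2u^2/2+o(u^2)$ as $u\to 0$; more precisely, for $|u|$ small, $|\varphi_\xi(u)-e^{-\sigma^2u^2/2}|\le \rho(|u|)\,u^2$ with $\rho(r)\to 0$ as $r\to 0$, and both $|\varphi_\xi(u)|$ and $e^{-\sigma^2u^2/2}$ are bounded by $\exp(-c u^2)$ for $|u|$ small, for some $c>0$. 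On the event $\Omega_{n_1,\dots,n_k}$ of Lemma \ref{lem:omega_n}, and for $\theta\in U(\eta)$, we have $|u_y|\le \sum_j |\theta_j| N_j^*\le \sum_j n_j^{-1/2-\eta} n_j^{1/2+\gamma}=\sum_j n_j^{\gamma-\eta}$, which tends to $0$ uniformly in $y$ provided $\gamma<\eta$ — so we fix such a $\gamma$ from the start. Hence on $\Omega_{n_1,\dots,n_k}$ every argument $u_y$ is uniformly small on $U(\eta)$, and the telescoping identity
\[
\prod_y\varphi_\xi(u_y)-\prod_y e^{-\sigma^2u_y^2/2}=\sum_{y_0}\Big(\prod_{y<y_0}\varphi_\xi(u_y)\Big)\big(\varphi_\xi(u_{y_0})-e^{-\sigma^2u_{y_0}^2/2}\big)\Big(\prod_{y>y_0}e^{-\sigma^2u_y^2/2}\Big)
\]
together with the bound $|\varphi_\xi(u_{y_0})-e^{-\sigma^2 u_{y_0}^2/2}|\le \rho_n\, u_{y_0}^2$, where $\rho_n:=\sup_{|u|\le \sum_j n_j^{\gamma-\eta}}\rho(|u|)\to 0$, yields on $\Omega_{n_1,\dots,n_k}$
\[
\Big|\prod_y\varphi_\xi(u_y)-\prod_y e^{-\sigma^2u_y^2/2}\Big|\le \rho_n\Big(\sum_y u_y^2\Big)\exp\Big(-c'\sum_y u_y^2\Big)
\]
for some $c'>0$, using that each of the two partial products is bounded by $\exp(-c'\sum u_y^2)$ on the relevant domain. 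Note $\sum_y u_y^2=Q_{n_1,\dots,n_k}(\theta)$.

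Next I would integrate this over $U(\eta)$. On the complement $\Omega_{n_1,\dots,n_k}^c$ both products are bounded by $1$ in modulus, so that contribution is at most $|U(\eta)|\,\PP(\Omega_{n_1,\dots,n_k}^c)=\O(\prod_i n_i^{-1/2-\eta})\cdot o(\min_i n_i^{-p})$, which after multiplication by $\prod_i n_i^{3/4}$ is still $o(1)$ uniformly for $n^\theta\le n_i\le n$ by choosing $p$ large. On $\Omega_{n_1,\dots,n_k}$ we are left with
\[
\rho_n\int_{U(\eta)} \EE\big[Q_{n_1,\dots,n_k}(\theta)\,e^{-c'Q_{n_1,\dots,n_k}(\theta)}\big]\,d\theta_1\cdots d\theta_k.
\]
Since $Q_{n_1,\dots,n_k}(\theta)=\langle \mathbf N\theta,\mathbf N^\ast\cdots\rangle$ is the quadratic form with matrix $(\langle N_{n_i}^{(i)},N_{n_j}^{(j)}\rangle)_{i,j}$, exactly the object controlled by Lemma \ref{lem:borne} (its determinant is $D_{n_1,\dots,n_k}$, comparable to $\prod n_i^{3/2}\,\overline\lambda_{t_1,\dots,t_k}^k$), the change of variables $\theta_i\mapsto \theta_i n_i^{-3/4}$ followed by a diagonalization of the form shows that this integral, multiplied by $\prod_i n_i^{3/4}$, is bounded by a constant times $\EE[D_{n_1,\dots,n_k}^{-1/2}\prod n_i^{3/4}]$ up to the Gaussian moment $\int x\,e^{-c'' x^2}$, hence is $\O(1)$ uniformly for $n^\theta\le n_i\le n$. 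Multiplying by $\rho_n\to 0$ finishes the estimate.

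The main obstacle is making the uniformity genuinely uniform in $n_1,\dots,n_k\in[n^\theta,n]$: the smallness $\rho_n$ of the Taylor remainder degrades as the $n_i$ spread apart, and one must check $\sum_j n_j^{\gamma-\eta}\to 0$ in the worst case $n_i=n^\theta$, which forces the constraint $\gamma<\eta$ and is why $\eta<1/4$ (so that $\gamma<1/4$ can be taken, matching the hypothesis). The other delicate point is that the integrated bound involves $\EE[\prod n_i^{3/4}D_{n_1,\dots,n_k}^{-1/2}]$ uniformly in $n_i$, which is precisely why Lemma \ref{lem:borne} — and through it Proposition \ref{proplambda}, i.e. Theorem \ref{theoMk} — is invoked; without the uniform negative-moment bound one only gets convergence for fixed $t_i$, not the uniform statement needed for the second (uniform) half of Theorem \ref{thmTLL}. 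Everything else (the telescoping, the elementary bound $|\varphi_\xi(u)-e^{-\sigma^2u^2/2}|\lesssim \rho(|u|)u^2$, the Gaussian integration) is routine.
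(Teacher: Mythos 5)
Your overall strategy is the same as the paper's: the telescoping comparison of $\prod_y\varphi_\xi(u_y)$ with $\prod_y e^{-\sigma^2u_y^2/2}$ site by site, the bound $|\varphi_\xi(u)-e^{-\sigma^2u^2/2}|\le u^2h(|u|)$ together with $|\varphi_\xi(u)|\le e^{-\sigma^2u^2/4}$ for $|u|\le\varepsilon_0$, the restriction to the typical event of Lemma \ref{lem:omega_n} so that $|u_y|\le k\,n^{\gamma-\eta}$ on $U(\eta)$ with $\gamma<\eta$, and then a change of variables turning the Gaussian-type integral into $D_{n_1,\dots,n_k}^{-1/2}$ times a fixed Gaussian moment, with uniformity coming from Lemma \ref{lem:borne}. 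Up to that point your write-up matches the paper's proof.

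However, there is a genuine gap at the final step. You claim that the main term, after multiplication by $\prod_i n_i^{3/4}$, is bounded by a constant times $\EE\bigl[\prod_i n_i^{3/4}\,D_{n_1,\dots,n_k}^{-1/2}\bigr]$ and that this is $\O(1)$ uniformly ``by Lemma \ref{lem:borne}''. This is not what Lemma \ref{lem:borne} gives: its second part bounds $\EE\bigl[\prod_i n_i^{3p/2}D_{n_1,\dots,n_k}^{-p},\ \widetilde\Omega_{n_1,\dots,n_k}(n^{-\theta_0})\bigr]$, i.e.\ the moment \emph{restricted} to the event where the normalized determinant is at least $n^{-\theta_0}$. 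Without that restriction the expectation is in general infinite: $D_{n_1,\dots,n_k}$ is a Gram determinant which vanishes with positive (exponentially small) probability, e.g.\ when two segments of the walk trace identical trajectories so that two rows of the Gram matrix coincide, and such trajectories are perfectly compatible with the range and H\"older bounds defining $\Omega_{n_1,\dots,n_k}$, so conditioning on that event does not help. The repair is exactly the paper's device: work on $\Omega_{n_1,\dots,n_k}\cap\widetilde\Omega_{n_1,\dots,n_k}(n^{-\theta_0})$ (with $\theta_0\in(0,\theta/4)$ and $\gamma$ chosen accordingly) before performing the change of variables, so that the second part of Lemma \ref{lem:borne} applies with $p=1/2$; and treat the complement of $\widetilde\Omega_{n_1,\dots,n_k}(n^{-\theta_0})$ exactly as you treated $\Omega_{n_1,\dots,n_k}^c$, namely with the trivial bound $|E_{n_1,\dots,n_k}|\le 2$, the volume estimate $|U(\eta)|=\O\bigl(\prod_i n_i^{-1/2-\eta}\bigr)$, and the first part of Lemma \ref{lem:borne}, which gives $\PP\bigl(\widetilde\Omega^c_{n_1,\dots,n_k}(n^{-\theta_0})\bigr)=\O(n^{-\theta_0L})$ for arbitrary $L$, uniformly over $n^{\theta}\le n_1,\dots,n_k\le n$. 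With this modification your argument becomes the paper's proof.
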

\begin{proof}
Recall that $U(\eta)=\{ |\theta_i| \le n_i^{-\frac 12-\eta}\ \forall i\le k\}$. 
Set
$$E_{n_1,\dots,n_k}(\theta_1,\dots,\theta_k):=\left(\prod_y\varphi_\xi(\theta_1N_{n_1}^{(1)}(y)+\dots +\theta_kN^{(k)}_{n_k}(y))\right)-
e^{-\sigma^2 Q_{n_1,\dots,n_k}(\theta_1,\dots,\theta_k)/2}. $$
We have to prove that 
$$
 \int_{U(\eta) }
 \EE \left[\left| E_{n_1,\dots,n_k}(\theta_1,\dots,\theta_k) \right|\right]
 \, d\theta_1\dots d\theta_k = o\left(\prod_{i=1}^kn_i^{- 3/4}\right).
$$
Observe that 
\begin{eqnarray*}
E_{n_1,\dots,n_k}(\theta_1,\dots,\theta_k) &= & \sum_y  \left(\prod_{z> y} \exp\left( -\frac{\sigma^2}{2}(\theta_1N_{n_1}^{(1)}(z)+
\dots + \theta_k N_{n_k}^{(k)}(z))^2\right)\right)\\
&\times & \left( \varphi_\xi (\theta_1N_{n_1}^{(1)}(y)+\dots + \theta_k N_{n_k}^{(k)}(y))- 
   e^{-\frac{\sigma^2}{2} (\theta_1 N_{n_1}^{(1)}(y)+\dots +\theta_kN_{n_k}^{(k)}(y) )^2 }\right) \\ 
& \times & \left(\prod_{z< y} \varphi_\xi\left( \theta_1N_{n_1}^{(1)}(z)+
\dots + \theta_k N_{n_k}^{(k)}(z)\right)\right).
\end{eqnarray*} 
Recall now that, since $\xi$ is square integrable,
we have $1-\varphi_\xi(u) \sim \sigma^2|u|^2/2$, as $u\to 0$. It follows that, 
$$\left\vert \varphi_\xi(u)-e^{-\sigma^2 u^2/2}\right\vert\le
|u|^2 h(\vert u\vert) \quad \textrm{for all } u\in \RR,$$ 
with $h$ a continuous and monotone function on $[0,+\infty)$ vanishing in $0$.
In particular there exists a constant $\varepsilon_0>0$, such that
\begin{eqnarray}
\label{majorationphi} 
\vert\varphi_\xi(u)\vert\le
\exp\left(-\sigma^2|u|^2/4\right) \quad \textrm{for all }u\in [-\varepsilon_0,\varepsilon_0].
\end{eqnarray}
Fix now $\gamma \in(0,\eta)$ and $\theta_0\in (0,\theta/4)$. Next recall \eqref{omegatilde} and observe that on 
$$\Omega(\gamma,\theta_0):=\Omega_{n_1,...,n_k}\cap \widetilde \Omega_{n_1,\dots,n_k}(n^{-\theta_0}),$$
if $|\theta_i|\le n_i^{-\frac 12-\eta}$ for all $i\le k$, then 
(see the remark following Lemma \ref{lem:omega_n}) for all $z\in \ZZ$,  
\begin{eqnarray*}
|\theta_1 N_{n_1}^{(1)}(z) +\dots + \theta_k N_{n_k}^{(k)}(z)| \le kn^{\gamma-\eta},
\end{eqnarray*} 
which is smaller than $\varepsilon_0$ for $n$ large enough. Then we get,
\begin{eqnarray*} 
|E_{n_1,\dots,n_k}(\theta_1,\dots,\theta_k) |{\bf 1}_{\Omega(\gamma,\theta_0)} &\le & 
h(kn^{\gamma-\eta}) e^{- \sigma^2  Q_{n_1,\dots,n_k}(\theta_1,\dots,\theta_k)/4} \\
 & \times & \sum_y e^{\frac{\sigma^2}{4}(\theta_1 N_{n_1}^{(1)}(y) +\dots + \theta_k N_{n_k}^{(k)}(y))^2}   
\left(\theta_1 N_{n_1}^{(1)}(y) +\dots + \theta_k N_{n_k}^{(k)}(y)\right)^2  \\ 
&= &   o(1)\times e^{(\sigma\varepsilon_0)^2}
      e^{-\frac{\sigma^2}{4} Q_{n_1,\dots,n_k}(\theta_1,\dots,\theta_k)} Q_{n_1,\dots,n_k}(\theta_1,\dots,\theta_k)\\ 
&= &  o(1)\times e^{-\frac{\sigma^2}{8} Q_{n_1,\dots,n_k}(\theta_1,\dots,\theta_k)}. 
\end{eqnarray*}
Therefore a change of variables gives
$$
\int_{U(\eta)}
   \left|E_{n_1,\dots,n_k}(\theta_1,\dots,\theta_k) \right|{\bf 1}_{\Omega(\gamma,\theta_0)}\, d\theta_1\dots d\theta_k =o(1)\times   
D_{n_1,\dots,n_k}^{-1/2} \  \int_{{\mathbb R}^k} e^{-\sigma^2 |r|_2^2/8}\, dr, 
$$
at least when $D_{n_1,\dots,n_k}>0$. The result now follows from Lemmas 
\ref{lem:omega_n} and \ref{lem:borne}.
\end{proof}
 
\noindent Finally Proposition \ref{lem:equivalent} is deduced from the following lemma.

\begin{lem}\label{lem:gaussian2}
Let $t_1,\dots,t_k\in(0,1)$ and $\eta\in (0,1/8)$ be given. Then
\[ 
\int_{U(\eta)}
\EE \cro{ e^{- \sigma^2 Q_{n_1,\dots,n_k}(\theta_1,\dots,\theta_k)/2 }} \, 
  d\theta_1\dots d\theta_k
=\left(\frac{\sqrt{2\pi}}{\sigma}\right)^k\,  \C_{t_1,\dots,t_k}\, n^{-3k/4} + o_k(n^{-3k/4}). 
\]
\end{lem}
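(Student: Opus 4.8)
The plan is to reduce the integral, via the scaling $\theta_i=u_i\,n_i^{-3/4}$, to a Gaussian integral governed by the limiting Gram matrix $\overline M_{t_1,\dots,t_k}$ of Section~\ref{CTfini}, and then to identify the constant. Recall from \eqref{aij} the matrix $A^{(n)}$ with entries $A^{(n)}_{i,j}=(n_in_j)^{-3/4}\bra{N^{(i)}_{n_i},N^{(j)}_{n_j}}$; with $\theta_i=u_i\,n_i^{-3/4}$ one has $Q_{n_1,\dots,n_k}(\theta)=\bra{A^{(n)}u,u}$, hence
\[
\int_{U(\eta)}\EE[e^{-\sigma^2Q_{n_1,\dots,n_k}(\theta)/2}]\,d\theta=\pare{\prod_{i=1}^kn_i^{-3/4}}\,J_n,\qquad J_n:=\int_{B_n}\EE[e^{-\sigma^2\bra{A^{(n)}u,u}/2}]\,du,
\]
where $B_n:=\acc{u\in\RR^k:\ |u_i|\le n_i^{1/4-\eta}\ \forall i\le k}$ is a box every side of which tends to $+\infty$ (since $\eta<1/8$ and $n_i\to\infty$). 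Along $n_i/n\to t_i$ one has $\prod_in_i^{-3/4}=n^{-3k/4}\prod_it_i^{-3/4}(1+o(1))$, so it suffices to prove that $J_n\to(\sqrt{2\pi}/\sigma)^k\prod_it_i^{3/4}\,\C_{t_1,\dots,t_k}$, the factors $\prod_it_i^{\pm3/4}$ then cancelling; here $\C_{t_1,\dots,t_k}$ stands for $\EE[\widetilde{\mathcal D}_{t_1,\dots,t_k}^{-1/2}]$, finite by Theorem~\ref{theoMk}.

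Next I would localize on a good event. Since $n_i/n\to t_i\in(0,1)$, for all large $n$ one has $n^{1/2}\le n_i\le n$; I fix $\theta=1/2$, pick $\theta_0\in(0,1/8)$ and then $\gamma>0$ small enough that the conclusions of Lemmas~\ref{lem:omega_n} and~\ref{lem:borne} hold and, moreover, $3\gamma(k-1)<1/2-2\eta-\theta_0$ (the right-hand side is positive since $\eta,\theta_0<1/8$). Put $G_n:=\Omega_{n_1,\dots,n_k}\cap\widetilde\Omega_{n_1,\dots,n_k}(n^{-\theta_0})$, with $\Omega_{n_1,\dots,n_k}$ from Lemma~\ref{lem:omega_n} and $\widetilde\Omega_{n_1,\dots,n_k}(\cdot)$ from \eqref{omegatilde}. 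By Lemma~\ref{lem:omega_n} and the first part of Lemma~\ref{lem:borne}, $\PP(G_n^c)=\O((\min_in_i)^{-p})$ for every $p$. Splitting (by Tonelli) $J_n=\EE[\ind_{G_n}\int_{B_n}e^{-\sigma^2\bra{A^{(n)}u,u}/2}\,du]+\EE[\ind_{G_n^c}\int_{B_n}e^{-\sigma^2\bra{A^{(n)}u,u}/2}\,du]$, the second term is at most $|B_n|\,\PP(G_n^c)\le(2n^{1/4})^k\PP(G_n^c)=o(1)$.

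The heart of the argument is to replace, on $G_n$, the truncated integral by the full Gaussian one. On $\Omega_{n_1,\dots,n_k}$ the diagonal entries satisfy $A^{(n)}_{i,i}\le n^{3\gamma}$ (remark after Lemma~\ref{lem:omega_n}, cf.~\eqref{aii}), hence $\mathrm{tr}\,A^{(n)}\le kn^{3\gamma}$; on $\widetilde\Omega_{n_1,\dots,n_k}(n^{-\theta_0})$, $\det A^{(n)}=\prod_in_i^{-3/2}D_{n_1,\dots,n_k}\ge n^{-\theta_0}$. Since $A^{(n)}$ is nonnegative symmetric, its smallest eigenvalue $\overline\lambda^{(n)}$ obeys $\det A^{(n)}\le\overline\lambda^{(n)}(\mathrm{tr}\,A^{(n)})^{k-1}$, so $\overline\lambda^{(n)}\ge k^{-(k-1)}n^{-\theta_0-3\gamma(k-1)}$ on $G_n$; by the choice of $\gamma,\theta_0$ and $\min_in_i\ge cn$ for large $n$, the quantity $\overline\lambda^{(n)}(\min_in_i^{1/4-\eta})^2$ grows at least like a positive power of $n$. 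Using $\bra{A^{(n)}u,u}\ge\overline\lambda^{(n)}|u|^2$ on $G_n$,
\[
0\ \le\ \int_{\RR^k}e^{-\sigma^2\bra{A^{(n)}u,u}/2}\,du\ -\ \int_{B_n}e^{-\sigma^2\bra{A^{(n)}u,u}/2}\,du\ \le\ \int_{|u|\ge\min_in_i^{1/4-\eta}}e^{-\sigma^2\overline\lambda^{(n)}|u|^2/2}\,du,
\]
and a standard Gaussian tail bound shows this last integral is $\O(e^{-n^c})$ for some $c>0$, uniformly on $G_n$. As moreover $A^{(n)}$ is positive definite on $G_n$, $\int_{\RR^k}e^{-\sigma^2\bra{A^{(n)}u,u}/2}\,du=(2\pi)^{k/2}\sigma^{-k}(\det A^{(n)})^{-1/2}$, whence $J_n=(2\pi)^{k/2}\sigma^{-k}\,\EE[\ind_{G_n}(\det A^{(n)})^{-1/2}]+o(1)$.

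Finally I would identify the limit. By Proposition~\ref{thmconvergencejointe} (with $n_i/n\to t_i$, through the linear change of coordinates $N_{n_1+\dots+n_i}\mapsto N^{(i)}_{n_i}$ and the scalings $(n^2/n_in_j)^{3/4}\to(t_it_j)^{-3/4}$) together with the continuous mapping theorem, $A^{(n)}\Rightarrow\overline M_{t_1,\dots,t_k}$ and hence $(\det A^{(n)})^{-1/2}\Rightarrow(\det\overline M_{t_1,\dots,t_k})^{-1/2}$, which is finite a.s.\ since $\det\overline M_{t_1,\dots,t_k}=\prod_it_i^{-3/2}\widetilde{\mathcal D}_{t_1,\dots,t_k}>0$ a.s.\ (Theorem~\ref{theoMk}); as $\ind_{G_n}\to1$ in probability, $\ind_{G_n}(\det A^{(n)})^{-1/2}$ has the same limit in distribution. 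The family $\acc{\ind_{G_n}(\det A^{(n)})^{-1/2}}_n$ is bounded in $L^2$, hence uniformly integrable, by the second part of Lemma~\ref{lem:borne}, since $\EE[(\ind_{G_n}(\det A^{(n)})^{-1/2})^2]\le\EE[(\det A^{(n)})^{-1}\ind_{\widetilde\Omega_{n_1,\dots,n_k}(n^{-\theta_0})}]$ is bounded uniformly in $n$ and in $n_1,\dots,n_k$. Therefore $\EE[\ind_{G_n}(\det A^{(n)})^{-1/2}]\to\EE[(\det\overline M_{t_1,\dots,t_k})^{-1/2}]=\prod_it_i^{3/4}\,\C_{t_1,\dots,t_k}$, so $J_n\to(\sqrt{2\pi}/\sigma)^k\prod_it_i^{3/4}\,\C_{t_1,\dots,t_k}$ and, with the first paragraph, the claimed equivalent follows. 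The main obstacle is the third step, namely replacing the integral over the expanding box $B_n$ by the one over $\RR^k$ \emph{uniformly in the realization of the walk}: this is precisely what forces the localization on $\widetilde\Omega_{n_1,\dots,n_k}(n^{-\theta_0})$ — which provides the a priori polynomial lower bound on $\det A^{(n)}$, hence on $\overline\lambda^{(n)}$ — and relies on the coupling behind Lemma~\ref{lem:borne}; without such quantitative control, near-degenerate configurations of $A^{(n)}$ would let the Gaussian mass escape $B_n$.
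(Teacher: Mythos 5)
Your proof is correct and follows essentially the same route as the paper: rewrite the integral as a Gaussian integral equal to a constant times $(\det A^{(n)})^{-1/2}$, control the truncation to $U(\eta)$ via the smallest-eigenvalue bound $\det/\mathrm{tr}^{k-1}$ on $\Omega_{n_1,\dots,n_k}\cap\widetilde\Omega_{n_1,\dots,n_k}(n^{-\theta_0})$, and pass to the limit using Proposition \ref{thmconvergencejointe} together with the uniform moment bounds of Lemma \ref{lem:borne}. The only (minor) difference is that you rescale $\theta_i=u_in_i^{-3/4}$ at the outset and localize on the good event before splitting off the Gaussian tail, which makes explicit the treatment of realizations where $D_{n_1,\dots,n_k}$ is degenerate — a point the paper's $I-J$ decomposition leaves implicit.
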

\begin{proof}
First write 
$$\int_{U(\eta)}
 e^{- \sigma^2 Q_{n_1,\dots,n_k}(\theta_1,\dots,\theta_k)/2} \, d\theta_1\dots d\theta_k = 
I_{n_1,\dots,n_k} - J_{n_1,\dots,n_k},$$
where $I_{n_1,\dots,n_k}$ is the integral over $\RR^k$ and $J_{n_1,\dots,n_k}$ is 
the integral over 
$\{\exists j\ :\ |\theta_j| > n_j^{-\frac 12 -\eta}\}$. A change of variables gives
$$ I_{n_1,\dots,n_k} =\sigma^{-k} D_{n_1,\dots,n_k}^{-1/2}\int_{\RR^k}e^{-|r|_2^2/2}\, dr.$$
According to Proposition \ref{thmconvergencejointe}, we know that
$$n^{-3k/4} D_{n_1,\dots,n_k}\quad  \mathop{\Longrightarrow}^{(\mathcal{L})} \quad   
\widetilde \D_{T_1,\dots,T_k},$$
as $n\to\infty$ and $n_i/n\rightarrow t_i$, for $i=1,...,k$. 
This, combined with Lemma \ref{lem:borne}, shows that 
$$\EE[I_{n_1,\dots,n_k}]=\left(\frac{\sqrt{2\pi}}{\sigma}\right)^k 
       \C_{t_1,\dots,t_k}\ n^{-3k/4}+ o_k(n^{-3k/4}),$$ 
and it just remains to estimate $\EE[J_{n_1,\dots,n_k}]$.

\noindent First consider the matrix $A_{n_1,\dots,n_k}:=(\langle N_{n_i}^{(i)},N_{n_j}^{(j)}\rangle)_{i,j\le k}$, 
and denote by $\mu_{n_1,\dots,n_k}$ its smallest eigenvalue.

\noindent Let now $\theta\in(0,1)$, $0<\theta_0<\frac\theta 4$ 
and $\gamma>0$ be such that $2\eta+ \theta_0+3\gamma(k-1)<1/4$.
We know that on $\Omega_{n_1,\dots,n_k}$, 
$$
\textrm{tr}(A_{n_1,...,n_k})=\sum_{i=1}^k \sum_y N_{n_i}^{(i)}(y)^2 \le kn^{3/2+3\gamma}(1+o_k(1)).$$ 
We deduce that all eigenvalues of $A_{n_1,\dots,n_k}$ are smaller than the right hand side of the above inequality.
In particular on $\widetilde \Omega_{n_1,\dots,n_k}(n^{-\theta_0})$, there exists a constant $c>0$ (depending only on $k$ and the $t_i$'s), 
such that 
\begin{eqnarray*}
\mu_{n_1,...,n_k}&\ge & \frac{D_{n_1,...,n_k}}{(k\, n^{\frac 32 +3\gamma}(1+o_k(1)))^{k-1}} \\ 
    &\ge & c\ n^{\frac 32-\theta_0-3\gamma(k-1)}(1-o_k(1)). 
\end{eqnarray*} 
Then we get 
\begin{eqnarray*}
\mu_{n_1,\dots,n_k} n^{-1-2\eta} &\ge & n^{\frac 12-2\eta-\theta_0-3\gamma(k-1)}(1-o_k(1))\\
&\ge & n^{1/4}(1-o_k(1)),
\end{eqnarray*}
since $2\eta + \theta_0 + 3\gamma(k-1) <1/4$ by hypothesis.  
Note moreover, that 
$$Q_{n_1,\dots,n_k}(\theta_1,\dots,\theta_k) \ge \mu_{n_1,\dots,n_k}\, |(\theta_1,\dots,\theta_k)|_2^2.$$ 
Therefore a change of variables gives
$$J_{n_1,\dots,n_k} \le  \mu_{n_1,\dots,n_k}^{-k/2}  
\int_{\{|r|_2 \ge \mu_{n_1,\dots,n_k}^{1/2}n^{-1/2-\eta}\}}e^{- \sigma^2\, |r|_2^2/2}\, dr,$$
and it then follows from the first part of Lemma \ref{lem:borne} 
that $\EE[J_{n_1,\dots,n_k}] = o_k(n^{-3k/4})$. This concludes the proof of the lemma. 
\end{proof}

\subsection{Proof of Proposition \ref{sec:step1}}
Let $\theta_0\in (0,1/4)$ be fixed. 
Consider the events 
$$H(\theta_1,\dots,\theta_k):=\{|\theta_1N_{n_1}^{(1)}(y)+\dots+\theta_kN_{n_k}^{(k)}(y)| \le \varepsilon_0 \quad \textrm{for all }y\in \ZZ\},$$ 
where $\varepsilon_0$ is as in \eqref{majorationphi}, and 
$$\widetilde H(\theta_1,\dots,\theta_k):=H( \theta_1,\dots,\theta_k)\cap \widetilde \Omega_{n_1,\dots,n_k}(n^{-\theta_0}).$$
Then by using \eqref{majorationphi} and the argument at the end of the proof of 
Lemma \ref{lem:gaussian2} we get 
$$\int_{V(\eta)}\EE\left[\prod_y|\varphi_\xi(\theta_1N_{n_1}^{(1)}(y)+\dots +\theta_kN^{(k)}_{n_k}(y))|,\,  
\widetilde H(\theta_1,\dots,\theta_k)\right]\, d\theta_1\dots d\theta_k = o_k(n^{-3k/4}),$$
and, thanks to Lemma \ref{lem:borne},
\begin{eqnarray*}
&& \sup_{n\ge 1}\ \sup_{n^{\theta}\le n_1,\dots,n_k\le n}\ 
  \left(\prod_{i=1}^kn_i^{\frac 34}\right) \\ 
&& \qquad \times  \int_{V(\eta)}\EE\left[\prod_y|
\varphi_\xi(\theta_1N_{n_1}^{(1)}(y)+\dots +\theta_kN^{(k)}_{n_k}(y))|,\ 
\widetilde H(\theta_1,\dots,\theta_k)\right]\, d\theta_1\dots d\theta_k <\infty.
\end{eqnarray*}
On the other hand by using the H\"older continuity of the local time 
(see Lemma \ref{lem:omega_n}), we get 
$$\PP\left[H(\theta_1,\dots,\theta_k)^c,\ \#\{y\in\ZZ\ :\   |\theta_1N_{n_1}^{(1)}(y)+\dots+\theta_kN_{n_k}^{(k)}(y)|
\in [\varepsilon_0/2,\varepsilon_0]\}\le n^{\frac 14}\right] = o_k(n^{-3k/4}),$$
uniformly in $(\theta_1,...,\theta_k)\in V(\eta)$
and 
\begin{eqnarray*}
&& \sup_{(\theta_1,...,\theta_k)\in V(\eta)}
  \sup_{n\ge 1}\ \sup_{n^{\theta}\le n_1,\dots,n_k\le n}\ 
  \left(\prod_{i=1}^kn_i^{\frac 34}\right) \\ 
&& \ \times\, \PP\left[H(\theta_1,\dots,\theta_k)^c,\ \#\{y\in\ZZ\ :\   
|\theta_1N_{n_1}^{(1)}(y)+\dots+\theta_kN_{n_k}^{(k)}(y)|
\in [\varepsilon_0/2,\varepsilon_0]\}\le n^{1/4}\right]<\infty.
\end{eqnarray*} 
Finally by using again \eqref{majorationphi}, we obtain  
$$\PP\left[H(\theta_1,\dots,\theta_k)^c,\  \left|\prod_y\varphi_\xi(\theta_1N_{n_1}^{(1)}(y)+\dots +\theta_kN^{(k)}_{n_k}(y))\right| 
>  e^{-(\sigma\varepsilon_0/2)^2n^{1/4}/4}\right]= o_k(n^{-3k/4}),$$
and 
\begin{eqnarray*}
&& \sup_{n\ge 1}\ \sup_{n^{\theta}\le n_1,\dots,n_k\le n}\ 
  \left(\prod_{i=1}^kn_i^{\frac 34}\right) \\ 
&& \ \times\, \PP\left[H(\theta_1,\dots,\theta_k)^c,\  \left|\prod_y\varphi_\xi(\theta_1N_{n_1}^{(1)}(y)+\dots +\theta_kN^{(k)}_{n_k}(y))\right| 
>  e^{-(\sigma\varepsilon_0/2)^2n^{1/4}/4}\right]<\infty.
\end{eqnarray*}
The proposition now follows with Lemma \ref{lem:borne}.   
\hfill $\square$

\section{Proof of Corollary \ref{momenttpslocal}}
\label{sectpsloc}
We first observe that for $k=1$, the result follows from \eqref{TLL1b}, since we can write
\begin{eqnarray*}
{\mathbb E}[{\mathcal N}_n(0)]&=&\sum_{i=0}^{n}{\mathbb P}(Z_i=0)
      =\sum_{i=0}^{\left\lfloor n/ d_0 \right\rfloor}
      {\mathbb P}(Z_{id_0}=0) \\
   &\mathop{\sim}_{n\to \infty} &
   \frac d\sigma \sum_{i=0}^{\left\lfloor n/d_0 \right\rfloor}
      p_{1,1}(0)(id_0)^{-\frac 34} \\
   &\sim_{n\to \infty} &   \frac{4d}{\sigma d_0}\, p_{1,1}(0)\, n^{1/4}
  = \frac{d}{\sigma d_0}\, \M_{1,1}(0) \, n^{1/4}, 
\end{eqnarray*}
and
$$ \M_{1,1}(0)=\int_0^1p_{1,t}(0)\, dt=\int_0^1p_{1,1}(0)\, t^{-\frac 34}\, dt=4\, p_{1,1}(0).$$
We prove now the result for some general $k\ge 1$. 
Fix some $\theta\in(0,1/4)$, and write
\begin{eqnarray*}
&& n^{-\frac k4}\, {\mathbb E}\left[{\mathcal N}_n(0)^k\right] =n^{-\frac k4}\sum_{n_1,\dots,n_k\le n}
   {\mathbb P}\left(Z_{n_1}=\dots = Z_{n_k}=0\right)\\
&=& n^{-\frac k 4} \sum_{n_1,\dots,n_k\le \floor {n/d_0}}{\mathbb P}\left(
     Z_{n_1d_0}=\dots =Z_{n_kd_0}=0\right) \\
&=& k!\,   n^{\frac {3k} 4}
   \int_{0\le u_1\le\dots\le u_k\le \frac 1{d_0}} 
  {\mathbb P}\left( Z_{\floor{nu_1}d_0}=\dots =Z_{\floor{nu_k}d_0}=0\right)
   du_1\dots du_k\\
&=&  k!\, n^{\frac {3k} 4}
   \int_{n^{\theta-1}\le u_1\le\dots\le u_k\le \frac 1{d_0}} 
  {\mathbb P}\left( Z_{\floor{nu_1}d_0}=\dots =Z_{\floor{nu_k}d_0}=0\right)
   du_1 \dots du_k+o(1).
\end{eqnarray*}
Indeed, for the last equality, we use Theorem \ref{thmTLL} which implies that for any $\ell \ge 1$, 
\begin{eqnarray*}
&&n^{\frac {3k} 4}
   \int_{0\le u_1\le \dots\le u_\ell\le n^{\theta-1}\le u_{\ell+1}
   \le \dots\le u_k\le \frac 1 {d_0}} 
  {\mathbb P}\left( Z_{\floor{nu_1}d_0}=\dots=Z_{\floor{nu_k}d_0}=0\right)
   du_1\dots du_k\\
&\le &C\, n^{\frac {3\ell}4+(\theta-1)\ell }\, \int_{0\le u_{\ell+1}\le
    \dots \le u_k\le \frac 1{d_0}}(u_{\ell+1}\dots u_k)^{-\frac 34}=o(1), 
\end{eqnarray*}
since $\theta< 1/4$ and 
where $C$ is the constant appearing in Theorem \ref{thmTLL}. 
Then, by using again 
Theorem \ref{thmTLL}, we can apply the Lebesgue dominated 
convergence theorem, and we get
\begin{eqnarray*}
&& n^{-\frac k4}\sum_{n_1,\dots,n_k}
   {\mathbb P}\left(Z_{n_1}=\dots = 
   Z_{n_k}=0\right) \\
&=& k! \left(\frac{d}{\sigma}\right)^k
   \int_{0\le u_1\le u_2\le\dots\le u_k\le 1/d_0}
    p_{k,u_1d_0,u_2d_0,\dots,u_kd_0}(0,\dots,0)\, du+o(1)\\
&=&  \left(\frac{d}{\sigma }\right)^k
   \int_{[0,1/d_0]^k}
    p_{k,u_1d_0,u_2d_0,\dots,u_kd_0}(0,\dots,0)\, du+o(1)\\
&=&\left(\frac{d}{d_0\sigma}\right)^k
   \int_{[0,1]^k}
    p_{k,u_1,u_2,\dots,u_k}(0,\dots,0)\, du+o(1)\\
&=&\left(\frac{d}{d_0\sigma}\right)^k\M_{k,1}(0)+o(1).
\end{eqnarray*}
This concludes the proof of the corollary. \hfill $\square$
 
\vspace{0.2cm}
\noindent We notice that similar calculations show that for any $r\ge 1$, any $k_1,\dots,k_r\ge 1$, and any $0<t_1<\dots<t_r$, 
\begin{eqnarray}
\label{multimoments} 
\EE\left[ \N_{[nt_1]}(0)^{k_1}\dots \N_{[nt_r]}(0)^{k_r}\right] \, \sim \,  \left(\frac{d}{\sigma d_0}\right)^{k_1+\dots +k_r}\, n^{\frac{k_1+\dots+k_r}4}\, 
\EE\left[ \L_{t_1}(0)^{k_1}\dots \L_{t_r}(0)^{k_r}\right],
\end{eqnarray} 
as $n\to \infty$. 

\vspace{0.2cm} 
\noindent At this point, it is also not difficult to see that the sequence $(\N_{[nt]}(0)/n^{1/4},t\ge 0)$ is tight in the Skorokhod space $\DD(\RR)$. 
For this, notice that for any $T>0$ and $p\ge 1$, there exists a constant $C=C(T,p)$, such that 
for all $t\in [0,T]$, $h>0$, and $\eta>0$,   
\begin{eqnarray*}
\PP\left( \N_{[n(t+h)]}(0) - \N_{[nt]}(0) \ge \eta\, n^{1/4} \right)&\le & \eta^{-p}\, n^{-p/4}\, \EE\left[ \left( \N_{[n(t+h)]}(0) - \N_{[nt]}(0) \right)^p\right] \\ 
&\le & C\, \eta^{-p} h^{p/4}.
\end{eqnarray*}  
Indeed the second inequality follows from the proof of Corollary \ref{momenttpslocal}. 
Since $\N_{[nt]}(0)$ is a nondecreasing process, the tightness follows for instance from Lemma (1.7) p.517 in \cite{RY}.


\section{Proof of Proposition \ref{thmconvergencejointe}}\label{preuvecvgcejointe}
It was proved by Kesten and Spitzer \cite{KestenSpitzer} that the normalized self-intersection local time of the random walk 
converges in distribution to its continuous counterpart. 
A similar convergence is proved for 
the mutual intersection local time in Chen's book \cite{BookChen}.
We prove Proposition~\ref{thmconvergencejointe}
by following carefully their proof.

\noindent For $j=1,...,k$, and $a<b$, let
 $$T_{n_j}^{(j)}(a,b):=\frac{1}{n_j}\sum_{a\leq
n^{-\frac{1}{2}}y<b}N_{n_j}^{(j)}(y),$$
denotes the time spent by $S_{\lfloor n_j\cdot\rfloor}
   ^{(j)}/\sqrt{n}$ in $[a,b)$ before time $n_j$. 
The mutual intersection local time of  
$S^{(j)}_{[n_j\cdot]}/\sqrt{n}$ 
and $S^{(j')}_{[n_{j'}\cdot]}/\sqrt{n}$ before time $1$ is defined by:
\begin{eqnarray*}
T_{n_j,n_{j'}}^{(j,j')}&:= &
\frac{\sqrt{n}}{n_jn_{j'}}\langle N_{n_j}^{(j)},N_{n_{j'}}^{(j')}\rangle\\
&=&\frac{\sqrt{n}}{n_jn_{j'}} \sum_{x\in \mathbb{Z}} \sum_{k=1}^{n_j}
   \sum_{\ell=1}^{n_{j'}} {\mathbf 1}_{\{S_k^{(j)}=x\}}{\mathbf 1}_{\{S_\ell^{(j')}=x\}}.
\end{eqnarray*}
For any $\varepsilon>0$, consider the regularizing functions 
$p_{\varepsilon}(x) :=e^{-x^2/ 2\varepsilon}/\sqrt{2\pi \varepsilon}$,  
and set 
\begin{eqnarray*}
T_{\varepsilon,n_j,n_{j'}}^{(j,j')}&:=& \frac{1}{\sqrt{n}n_jn_{j'}}
     \sum_{x\in \mathbb{Z}} \sum_{k=1}^{n_j}
   \sum_{\ell=1}^{n_{j'}}  p_{\varepsilon}\Big(\frac{S_k^{(j)} 
      -x}{\sqrt{n}}\Big) p_{\varepsilon}
       \Big(\frac{{S}_\ell^{(j')} -x}{\sqrt{n}}\Big).
\end{eqnarray*}
Similarly, let 
$$
\Lambda_{j}(a,b):=\frac 1{T_j}\int_{a}^{b}L^{(j)}_{T_j}(x)\, dx,$$
denotes the time spent by $B^{(j)}$ in $[a,b)$ before time $T_j$, and 
let 
$$\Lambda_{j,j'}:=\frac 1{T_jT_{j'}}
  \int_{\mathbb R} L_{T_j}^{(j)}(x) {L}_{T_{j'}}^{(j')}(x) \, dx,$$
denotes the mutual intersection local time of $B^{(j)}_{T_j\cdot}$ and $B^{(j')}_{T_{j'}\cdot}$. 
Finally set for every $\varepsilon > 0$,
$$\Lambda_{\varepsilon,j,j'}: =\int_{\mathbb R}\left( \int_{[0,1]^2}
   p_{\varepsilon}(B_{s T_j} -x) 
 p_{\varepsilon}({B}_{t T_{j'}} -x) \, ds\, dt\right)\, dx.$$
We will use the following lemmas: 
\begin{lem}\label{approx1}  (Lemma 5.3.1 in Chen ) For all $j\ne j'$, 
$$\lim_{\varepsilon\rightarrow 0}\, \limsup_{n\to \infty}\, \limsup_{n_j/n\to T_j,\, 
n_{j'}/n \to T_{j'}} \mathbb{E}
\left[ |T^{(j,j')}_{n_j,n_{j'}} - T_{\varepsilon,n_j,n_{j'}}^{(j,j')}|^2\right] = 0.$$
\end{lem}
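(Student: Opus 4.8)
The plan is to follow the proof of the corresponding statement in Chen's book \cite{BookChen}, adapting it to the fact that here $S^{(j)}$ and $S^{(j')}$ are not two independent walks but two disjoint time-windows of the single walk $S$; this causes no additional difficulty, since every probability that will occur is a joint law of increments of $S$ over disjoint index sets, hence reduces to quantities of the form $\PP(S_m=z)$. Write $T:=T^{(j,j')}_{n_j,n_{j'}}$ and $T_\varepsilon:=T^{(j,j')}_{\varepsilon,n_j,n_{j'}}$ for short, assume without loss of generality $j<j'$, and let $I_j,I_{j'}$ denote the two blocks of time indices (disjoint, with $I_j$ entirely to the left of $I_{j'}$). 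The starting point is the expansion
$$\EE\left[|T-T_\varepsilon|^2\right]=\EE[T^2]-2\,\EE[T\,T_\varepsilon]+\EE[T_\varepsilon^2],$$
each of the three terms being a fourfold sum over $(k_1,k_2)\in I_j^2$ and $(\ell_1,\ell_2)\in I_{j'}^2$ of an expectation in $S_{k_1},S_{k_2},S_{\ell_1},S_{\ell_2}$, supplemented by one extra sum over a site of $\ZZ$ in the crossed term and two such sums in $\EE[T_\varepsilon^2]$.

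The heart of the proof is to evaluate these three expressions as $n\to\infty$ with $n_j/n\to T_j$, $n_{j'}/n\to T_{j'}$. Splitting each fourfold sum according to the relative order of the four times, writing the resulting joint probabilities in terms of the (independent) increments of $S$, and applying the local limit theorem, I would replace every transition probability $\PP(S_m=z)$ by the Gaussian transition kernel $g_m(z)$ of the walk, with error $\O(m^{-1})$ uniform in $z$, negligible after summation over the times. The remaining inner sums over sites are then Riemann sums for Gaussian integrals; since $\varepsilon$ is held fixed while $n\to\infty$, they run over order $\sqrt n$ effective points and converge, with relative error $\O((\varepsilon n)^{-1/2})$, to the corresponding integrals. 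In particular $\sum_{x\in\ZZ}p_\varepsilon((a-x)/\sqrt n)\,p_\varepsilon((b-x)/\sqrt n)=\sqrt n\,p_{2\varepsilon}((a-b)/\sqrt n)+\O(\varepsilon^{-1})$, and the "common site" sum generated by the two coincidence constraints in $\EE[T^2]$ collapses into a product of three heat kernels integrated over $\RR$. Carrying this out, each of the three expectations converges — and, the estimates being uniform in $(n_j,n_{j'})$, in the sense of the iterated $\limsup$ of the statement — to an explicit integral over normalized times in $[0,T_j]^2\times[0,T_{j'}]^2$ of a product of three Gaussian transition kernels; these limits are exactly $\EE[\Lambda_{j,j'}^2]$, $\EE[\Lambda_{j,j'}\Lambda_{\varepsilon,j,j'}]$ and $\EE[\Lambda_{\varepsilon,j,j'}^2]$, the sole difference being that two of the kernels get convolved with $p_{2\varepsilon}$ in the last, one of them in the crossed term, and none in the first.

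Combining these three limits yields $\limsup_n\EE[|T-T_\varepsilon|^2]=\EE[|\Lambda_{j,j'}-\Lambda_{\varepsilon,j,j'}|^2]$, and it remains to let $\varepsilon\to0$. Since $p_{2\varepsilon}$ is an approximate identity, $\Lambda_{\varepsilon,j,j'}\to\Lambda_{j,j'}$ almost surely, while $\sup_{\varepsilon\in(0,1)}\Lambda_{\varepsilon,j,j'}$ is square integrable (equivalently, the mutual intersection local time of two pieces of a one-dimensional Brownian motion over disjoint time intervals lies in $L^2$); dominated convergence then gives $\EE[|\Lambda_{j,j'}-\Lambda_{\varepsilon,j,j'}|^2]\to0$, which is the claim.

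I expect the main obstacle to be the middle step: the combinatorics of the orderings of the four times, and above all the proof that the local-limit-theorem errors and the Riemann-sum errors sum to $o(1)$ uniformly over all admissible $(n_j,n_{j'})$. This needs quantitative local limit estimates of the form $\PP(S_m=z)=g_m(z)+\O(m^{-1})$ uniformly in $z$ (available since $X_1$ has moments of all orders), together with the crude bound $\PP(S_m=z)\le C\,m^{-1/2}$ to absorb the contributions of small time gaps, and finally the identification of the limiting integrals with the Brownian second moments $\EE[\Lambda_{j,j'}^2]$, etc. The concluding passage $\varepsilon\to0$ is, by contrast, the soft part, resting only on square integrability of the continuous object and dominated convergence.
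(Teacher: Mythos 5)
You have proved a statement that the paper itself does not prove: Lemma \ref{approx1} is simply imported from Chen's book (Lemma 5.3.1 in \cite{BookChen}), so there is no in-paper argument to compare with, and your route also differs from Chen's, which obtains the $L^2$ approximation directly via characteristic functions and Parseval-type bounds in the Fourier variable rather than via term-by-term local limit asymptotics. Your strategy --- expand $\EE[|T-T_\varepsilon|^2]$ into the three mixed second moments, show each converges (by the local CLT plus Riemann-sum estimates on the fourfold time sums) to its Brownian counterpart so that the $\limsup$ equals $\EE[|\Lambda_{j,j'}-\Lambda_{\varepsilon,j,j'}|^2]$, and then let $\varepsilon\to 0$ using the continuous $L^2$ theory --- is a classical and viable alternative; note that this last step is exactly the paper's Lemma \ref{approx2} (Chen, Theorem 2.2.3), so you may as well invoke it instead of reproving it via domination by $\sup_x L^{(j)}(x)$. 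What your route buys is a self-contained treatment that also makes explicit the reduction from two disjoint time-windows of the single walk to products of transition probabilities over the successive gaps, a point the paper passes over silently when quoting a lemma stated for independent walks; what it costs is precisely the quantitative middle step you flag but do not carry out (uniform control of the local-CLT and Riemann-sum errors over all orderings and over small time gaps), which is where essentially all the work lies, whereas the Fourier route bypasses the identification of the limits altogether and yields bounds uniform in $n$ more cheaply. Two technical caveats if you execute your plan: the paper only assumes that the support of $X_1$ generates $\ZZ$, so the walk may be periodic and the expansion $\PP(S_m=z)=g_m(z)+\O(m^{-1})$ uniformly in $z$ is false as stated (you must work on the appropriate sublattice and insert the period factor in the Riemann sums); and your identification of the three limits with $\EE[\Lambda_{j,j'}^2]$, $\EE[\Lambda_{j,j'}\Lambda_{\varepsilon,j,j'}]$, $\EE[\Lambda_{\varepsilon,j,j'}^2]$ implicitly uses that the lengths of the intermediate windows separating blocks $j$ and $j'$ also converge after division by $n$, which holds in the application but should be said.
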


\begin{lem}\label{approx2}  (Theorem 2.2.3 in Chen)
For all $j\ne j'$,
The sequence $ (\Lambda_{\varepsilon,j,j'},\varepsilon>0)$ converges in $L^2$ to  
$\Lambda_{j,j'}$, as $\varepsilon$ goes to 0.
\end{lem}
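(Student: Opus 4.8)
The plan is to establish the $L^2$ convergence in two steps: first show that $(\Lambda_{\varepsilon,j,j'})_{\varepsilon>0}$ is a Cauchy family in $L^2(\Omega)$, hence converges there to some $\Lambda$, and then identify $\Lambda$ with $\Lambda_{j,j'}$ by an almost sure subsequential argument. Since both $\Lambda_{\varepsilon,j,j'}$ and $\Lambda_{j,j'}$ depend on $B^{(j)},B^{(j')}$ only through their occupation measures, which are unchanged when $B^{(j)}$ and $B^{(j')}$ are shifted by constants, I may replace $B^{(j)},B^{(j')}$ by their increment processes; these are increments of the underlying Brownian motion over disjoint time intervals, hence \emph{independent} standard Brownian motions, which I assume henceforth. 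Moreover, by Fubini and the elementary identity $\int_\RR p_\varepsilon(a-x)p_\varepsilon(b-x)\,dx=p_{2\varepsilon}(a-b)$,
\[
\Lambda_{\varepsilon,j,j'}=\int_{[0,1]^2}p_{2\varepsilon}\pare{B^{(j)}_{sT_j}-B^{(j')}_{tT_{j'}}}\,ds\,dt.
\]

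For the first step I would compute second moments. By Fubini,
\[
\EE\cro{\Lambda_{\varepsilon,j,j'}\Lambda_{\varepsilon',j,j'}}=\int_{[0,1]^4}\EE\cro{p_{2\varepsilon}\pare{B^{(j)}_{s_1T_j}-B^{(j')}_{t_1T_{j'}}}\,p_{2\varepsilon'}\pare{B^{(j)}_{s_2T_j}-B^{(j')}_{t_2T_{j'}}}}\,ds_1\,ds_2\,dt_1\,dt_2.
\]
By independence of $B^{(j)}$ and $B^{(j')}$, the vector $\pare{B^{(j)}_{s_1T_j}-B^{(j')}_{t_1T_{j'}},\,B^{(j)}_{s_2T_j}-B^{(j')}_{t_2T_{j'}}}$ is centered Gaussian with covariance $\Gamma=\Gamma(s_1,s_2,t_1,t_2)$, $\Gamma_{ii}=s_iT_j+t_iT_{j'}$, $\Gamma_{12}=\min(s_1,s_2)T_j+\min(t_1,t_2)T_{j'}$, and the inner expectation is the value at the origin of the $\mathcal{N}\pare{0,\Gamma+\mathrm{diag}(2\varepsilon,2\varepsilon')}$ density, i.e.\ $\pare{2\pi\sqrt{\det\pare{\Gamma+\mathrm{diag}(2\varepsilon,2\varepsilon')}}}^{-1}$ (for fixed $\varepsilon,\varepsilon'>0$ this is bounded, so $\Lambda_{\varepsilon,j,j'}\in L^2(\Omega)$). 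As $\det(\Gamma+D)\ge\det\Gamma$ for nonnegative symmetric $2\times 2$ matrices $\Gamma,D$, this quantity increases to $\pare{2\pi\sqrt{\det\Gamma}}^{-1}$ when $\varepsilon,\varepsilon'\downarrow0$, so monotone convergence gives $\EE[\Lambda_{\varepsilon,j,j'}\Lambda_{\varepsilon',j,j'}]\uparrow\ell:=(2\pi)^{-1}\int_{[0,1]^4}(\det\Gamma)^{-1/2}\,ds_1\,ds_2\,dt_1\,dt_2$. A short case analysis over the orderings of $\{s_1,s_2\}$ and of $\{t_1,t_2\}$ gives $\det\Gamma=\pare{\min(s_1,s_2)T_j+\min(t_1,t_2)T_{j'}}\pare{|s_1-s_2|T_j+|t_1-t_2|T_{j'}}$, so passing to the two differences as new variables bounds $\ell$ by $(2\pi)^{-1}\pare{\int_{[0,1]^2}(uT_j+vT_{j'})^{-1/2}\,du\,dv}^2<\infty$. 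In particular $\EE[\Lambda_{\varepsilon,j,j'}^2]\to\ell$ and $\EE[\Lambda_{\varepsilon,j,j'}\Lambda_{\varepsilon',j,j'}]\to\ell$, so $\EE[(\Lambda_{\varepsilon,j,j'}-\Lambda_{\varepsilon',j,j'})^2]\to0$; thus the family is Cauchy in $L^2(\Omega)$ and converges there to some $\Lambda$.

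For the second step, set $\bar\alpha_j:=T_j^{-1}L^{(j)}_{T_j}$, which by the occupation times formula is the density of the occupation measure of $(B^{(j)}_{sT_j})_{s\in[0,1]}$, so that $\Lambda_{\varepsilon,j,j'}=\bra{p_\varepsilon*\bar\alpha_j,\,p_\varepsilon*\bar\alpha_{j'}}_{L^2(\RR)}$ and $\Lambda_{j,j'}=\bra{\bar\alpha_j,\bar\alpha_{j'}}_{L^2(\RR)}$. Almost surely $\bar\alpha_j$ is continuous with compact support (joint continuity of Brownian local time), hence $\|p_\varepsilon*\bar\alpha_j-\bar\alpha_j\|_\infty\to0$, and since $\|p_\varepsilon*\bar\alpha_j\|_{L^1}\le\|\bar\alpha_j\|_{L^1}$ this yields $\|p_\varepsilon*\bar\alpha_j-\bar\alpha_j\|_{L^2(\RR)}^2\le 2\|p_\varepsilon*\bar\alpha_j-\bar\alpha_j\|_\infty\to0$ a.s.; therefore $\Lambda_{\varepsilon,j,j'}\to\bra{\bar\alpha_j,\bar\alpha_{j'}}=\Lambda_{j,j'}$ almost surely as $\varepsilon\to0$. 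Picking $\varepsilon_n\downarrow0$ along which $\Lambda_{\varepsilon_n,j,j'}\to\Lambda$ almost surely (possible by the first step) identifies $\Lambda=\Lambda_{j,j'}$ a.s., which finishes the proof.

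I expect the only substantial point to be the finiteness of $\ell$: this is exactly the finiteness of the second moment of the mutual intersection local time of two independent Brownian motions, which holds because the state space is one-dimensional (the corresponding integral would diverge in dimension $\ge 4$), and it requires the slightly tedious bookkeeping of the four orderings in evaluating $\det\Gamma$. Everything else reduces to routine Gaussian computations and the standard mollification estimate.
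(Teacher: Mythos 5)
Your overall architecture (an $L^2$-Cauchy argument via Gaussian second-moment computations, followed by identification of the limit through the a.s.\ convergence of the mollified occupation densities) is sound, and your second step is fine as written. The genuine gap is the very first reduction. In this paper $B^{(j)}$ and $B^{(j')}$ are consecutive pieces of a \emph{single} Brownian motion, kept with their actual (random, correlated) starting positions; they are not independent, and the functional $\Lambda_{j,j'}=\bra{\bar\alpha_j,\bar\alpha_{j'}}_{L^2(\RR)}$ is \emph{not} unchanged when the two paths are shifted by different constants: shifting a path translates its occupation density, and translating $\bar\alpha_j$ and $\bar\alpha_{j'}$ by different amounts changes their inner product. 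Concretely, for non-adjacent blocks ($j'>j+1$) the two pieces have disjoint ranges with positive probability, so $\Lambda_{j,j'}=0$ with positive probability, whereas the mutual intersection local time of two independent Brownian motions both started at $0$ is a.s.\ positive; so your replacement genuinely changes the law of the objects, and this dependence is exactly what the paper needs downstream (Lemma 25 must produce the joint law whose sums reconstruct $\langle L_{T_i},L_{T_j}\rangle$ for one Brownian motion). Consequently the covariance matrix $\Gamma$ in your first step is the wrong one, and as written that step proves the statement for a different pair of processes.

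The repair stays within your scheme. For the true processes, each difference $B^{(j)}_{\cdot}-B^{(j')}_{\cdot}$ appearing in the second-moment computation is (up to sign) an increment of the underlying Brownian motion over a time interval $I_i$ stretching from a time in block $j$ to a time in block $j'$; hence $(G_1,G_2)$ is still centered Gaussian, now with $\Gamma_{ii}=\va{I_i}$ and $\Gamma_{12}=\va{I_1\cap I_2}$, so that $\det\Gamma=\va{I_1}\va{I_2}-\va{I_1\cap I_2}^2\ge \va{I_1\cap I_2}\,\pare{\va{I_1\setminus I_2}+\va{I_2\setminus I_1}}$, and since $\va{I_1\cap I_2}$ is bounded below by (for instance) $\min(t_1,t_2)$ times the length of block $j'$, the integral of $(\det\Gamma)^{-1/2}$ over $[0,1]^4$ is again finite in dimension one; monotone convergence then gives the Cauchy property exactly as you argued, and the identification step is unchanged. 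Two smaller remarks: your claimed identity for $\det\Gamma$ in the independent computation holds only when the orderings of $(s_1,s_2)$ and $(t_1,t_2)$ agree (in the mixed case it is only a lower bound, which is all you use); and note that the paper gives no proof of this lemma at all, only the citation of Chen's Theorem 2.2.3, which is stated for independent Brownian motions, so a corrected version of your argument would in fact supply the adaptation that the citation glosses over.
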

\noindent We can then already deduce the following:       
\begin{lem}\label{pre2} 
For any $m_1,\ldots,m_k\ge 1$ and any $-\infty<a_{j,\ell}<b_{j,\ell}<\infty$ 
($j=1,\ldots,k$ and $\ell=1,\ldots,m_j$), 
$$
\Big(\left(T_{n_j}^{(j)}(a_{j,\ell},b_{j,\ell})\right)_{j=1,\dots,k,\,
 \ell=1,\dots,m_j},
\left(T_{n_j,n_{j'}}^{(j,j')}\right)_{1\le j<j'\le k} \Big),$$
converges in distribution to 
$$ \Big(\left(\Lambda_{j}(a_{j,\ell},b_{j,\ell})\right)_{j=1,\dots,k,\, \ell=1,\dots,m_j},
\left(\Lambda_{j,j'}\right)_{1\le j<j'\le k} \Big),$$
as $n \to +\infty$, and $n_j/n\rightarrow T_j$ for all $j\le k$. 
\end{lem}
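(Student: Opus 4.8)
The plan is to split the coordinates of the random vector into the occupation-time coordinates $T^{(j)}_{n_j}(a_{j,\ell},b_{j,\ell})$ and the mutual-intersection coordinates $T^{(j,j')}_{n_j,n_{j'}}$, and to treat them differently. The former are bounded, almost surely continuous functionals of the rescaled random walk paths, so they will be handled by the invariance principle together with the continuous mapping theorem. The latter are \emph{not} continuous functionals of the paths (the mutual intersection local time is a singular object), so for them I would first establish convergence for the regularized quantities $T^{(j,j')}_{\varepsilon,n_j,n_{j'}}$, which \emph{are} continuous functionals, and then let the regularization parameter $\varepsilon$ tend to $0$, importing the two approximation Lemmas \ref{approx1} and \ref{approx2} in order to interchange the limits $n\to\infty$ and $\varepsilon\to0$.

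In detail: by Donsker's invariance principle, the rescaled walks $\big(S^{(j)}_{[n_j s]}/\sqrt n,\ s\in[0,1]\big)_{j\le k}$ converge in distribution, as $n\to\infty$ and $n_j/n\to T_j$, to $\big(B^{(j)}_{T_j s},\ s\in[0,1]\big)_{j\le k}$. Since by definition
$$T^{(j)}_{n_j}(a,b)=\int_0^1\ind_{[a,b)}\!\Big(\frac{S^{(j)}_{[n_j s]}}{\sqrt n}\Big)ds,\qquad \Lambda_j(a,b)=\int_0^1\ind_{[a,b)}\big(B^{(j)}_{T_j s}\big)ds,$$
and since the functional $x(\cdot)\mapsto\int_0^1\ind_{[a,b)}(x_s)\,ds$ is continuous at every path spending zero Lebesgue time at the levels $a$ and $b$ — a property shared a.s. by all the $B^{(j)}$ — the continuous mapping theorem yields joint convergence in distribution of $\big(T^{(j)}_{n_j}(a_{j,\ell},b_{j,\ell})\big)_{j,\ell}$ to $\big(\Lambda_j(a_{j,\ell},b_{j,\ell})\big)_{j,\ell}$. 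For fixed $\varepsilon>0$ the same argument applies to the regularized intersection times: since
$$T^{(j,j')}_{\varepsilon,n_j,n_{j'}}=\frac1{\sqrt n}\sum_{x\in\ZZ}\Big(\frac1{n_j}\sum_{k=1}^{n_j}p_\varepsilon\big(\tfrac{S^{(j)}_k-x}{\sqrt n}\big)\Big)\Big(\frac1{n_{j'}}\sum_{\ell=1}^{n_{j'}}p_\varepsilon\big(\tfrac{S^{(j')}_\ell-x}{\sqrt n}\big)\Big),$$
one recognises, up to Riemann-sum errors in the variables $x/\sqrt n$ and $s=k/n_j$, $t=\ell/n_{j'}$ which vanish as $n\to\infty$ because $p_\varepsilon$ is smooth with rapidly decaying derivatives, a bounded continuous functional of the pair of rescaled paths whose value on the limiting paths $B^{(j)},B^{(j')}$ is exactly $\Lambda_{\varepsilon,j,j'}$. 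Hence for each fixed $\varepsilon>0$ the random vector $\mathbf V_n^\varepsilon$ with coordinates $\big(T^{(j)}_{n_j}(a_{j,\ell},b_{j,\ell})\big)_{j,\ell}$ and $\big(T^{(j,j')}_{\varepsilon,n_j,n_{j'}}\big)_{j<j'}$ converges in distribution, as $n\to\infty$, to the vector $\mathbf V_\infty^\varepsilon$ with coordinates $\big(\Lambda_j(a_{j,\ell},b_{j,\ell})\big)_{j,\ell}$ and $\big(\Lambda_{\varepsilon,j,j'}\big)_{j<j'}$.

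It remains to remove the regularization. Denote by $\mathbf V_n$ and $\mathbf V_\infty$ the random vectors appearing in the statement of the lemma. By Lemma \ref{approx2}, applied to the finitely many pairs $j<j'$ and combined with $\|\cdot\|_{L^1}\le\|\cdot\|_{L^2}$, one has $\mathbf V_\infty^\varepsilon\to\mathbf V_\infty$ in $L^1$, hence in distribution, as $\varepsilon\to0$. By Lemma \ref{approx1}, applied likewise to the finitely many pairs, one has
$$\lim_{\varepsilon\to0}\ \limsup_{n\to\infty,\ n_j/n\to T_j}\ \EE\big[\,|\mathbf V_n^\varepsilon-\mathbf V_n|\,\big]=0.$$
These two facts, together with the convergence $\mathbf V_n^\varepsilon\Rightarrow\mathbf V_\infty^\varepsilon$ (as $n\to\infty$, for each fixed $\varepsilon$) obtained above, are exactly the hypotheses of the standard approximation criterion for convergence in distribution, carried out as in Kesten--Spitzer \cite{KestenSpitzer} and in Chen \cite{BookChen}; it yields $\mathbf V_n\Rightarrow\mathbf V_\infty$, which is the claim of Lemma \ref{pre2}.

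The main obstacle is precisely this interchange of the limits $n\to\infty$ and $\varepsilon\to0$: it is what forces us to invoke the \emph{uniform-in-$n$} $L^2$ approximation of Lemma \ref{approx1}, rather than the much easier pointwise-in-$n$ statement, and it is the reason the regularization scheme is needed at all. A secondary, routine point is to check that the regularized intersection local time is genuinely a continuous functional of the paths in spite of the infinite range of the $x$-summation; this is taken care of by the exponential decay of $p_\varepsilon$ and the a.s. boundedness of the limiting Brownian paths on $[0,1]$, which confine the relevant values of $x/\sqrt n$ to a compact set up to negligible tails.
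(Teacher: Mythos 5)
Your proposal is correct and follows essentially the same route as the paper: regularize the mutual intersection times by $p_\varepsilon$, apply Donsker plus the continuous mapping theorem for fixed $\varepsilon$, and then remove the regularization via the uniform-in-$n$ Lemma \ref{approx1} together with Lemma \ref{approx2} to justify the interchange of limits. The only (cosmetic) difference is that the paper packages the joint convergence through characteristic functions of linear combinations (Cram\'er--Wold), whereas you invoke the standard approximation criterion for weak convergence directly on the random vectors; both arguments are equivalent, and your remark that the occupation-time functional is only a.s.\ continuous (at paths spending zero time at the levels $a,b$) is in fact slightly more careful than the paper's statement.
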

\begin{proof}[Proof of Lemma \ref{pre2}] 
Let $\theta_{j,\ell}$ (for $j=1,\dots,k$ and $\ell=1,\dots,m_j$) and
$\overline\theta_{j,j'}$ (for $1\le j<j'\le k$) be some fixed real numbers.
It suffices to prove that
$$ \mathbb{E}\left( \exp\left(i \sum_{j=1}^k\sum_{\ell=1}^{m_j}\theta_{j,\ell}
   T_{n_j}^{(j)}(a_{j,\ell},b_{j,\ell}) + i\sum_{1\le j< j'\le k}
   \overline{\theta}_{j,j'}  T_{n_j,n_{j'}}^{(j,j')} \right)\right),$$
converges to
$$\mathbb{E}\left( \exp\left(i \sum_{j=1}^k\sum_{\ell=1}^{m_j}\theta_{j,\ell}
\Lambda_{j}(a_{j,\ell},b_{j,\ell})+i \sum_{1\le j< j'\le k}
   \overline{\theta}_{j,j'} \Lambda_{j,j'} \right)\right) .$$
Lemmas \ref{approx1} and \ref{approx2} show that we can replace 
the $T_{n_j,n_{j'}}^{(j,j')}$ and $\Lambda_{j,j'}$, respectively by 
$T_{\varepsilon,n_j,n_{j'}}^{(j,j')}$ and $\Lambda_{\varepsilon,j,j'}$.

\noindent Observe now that the map
\begin{eqnarray*}
(x^{(j)})_{j\le k} &\mapsto & \sum_{j=1}^k\sum_{\ell=1}^{m_j}
    \theta_{j,\ell} \int_{0}^{1} \mbox{\bf 1}_{[a_{j,\ell}\leq x^{(j)}_{s} < b_{j,\ell}]} \, ds \\
    && +\sum_{1\le j<j'\le k} \overline{\theta}_{j,j'} \int_{\mathbb R} \int_{[0,1]^2} 
    p_{\varepsilon} (x_s^{(j)}- x) p_{\varepsilon} ({x}^{(j')}_t-x) \, ds\, dt \, dx,
\end{eqnarray*}
 is  continuous  on ${\DD}([0,1],\mathbb R^k)$ for the Skorokhod topology.
Observe moreover, that for all fixed $\varepsilon >0$, 
$$T_{\varepsilon,n_j,n_{j'}}^{(j,j')}=\int_{\mathbb R} \int_{[0,1]^2} 
    p_{\varepsilon} \left(\frac{S^{(j)}_{[n_js]}}{\sqrt{n}}-x \right)
   p_{\varepsilon} \left(\frac{\tilde{S}_{[n_{j'}t]}}{\sqrt{n}}-x\right)\, ds\, dt\, dx + o(1).$$
Therefore the weak convergence of
$\left(S_{[n_j\cdot]}^{(j)}/\sqrt{n},j\le k\right)
$ toward 
$(B^{(j)}_{T_j\cdot}, j\le k)$, implies that 
$$\sum_{j=1}^k \sum_{\ell=1}^{m_j}\theta_{j,\ell} T_{n_j}^{(j)}(a_{j,\ell},b_{j,\ell}) 
+\sum_{1\le j<j'\le k}\overline{\theta}_{j,j'}T_{\varepsilon,n_j,n_{j'}}^{(j,j')}, $$
converges in distribution to 
$$ \sum_{j=1}^k\sum_{\ell=1}^{m_j}\theta_{j,\ell} \Lambda_j(a_{j,\ell},b_{j,\ell})
  +\sum_{1\le j<j'\le k}\overline{\theta}_{j,j'} \Lambda_{\varepsilon,j,j'}.$$ 
The result follows. 
\end{proof}
\noindent We finish now the proof of Proposition \ref{thmconvergencejointe}. 
Let $\theta_{j}$ (for $j=1,\dots,k$) and 
$\theta_{j,j'}$ (for $1\le j<j'\le k$) be some fixed real numbers. 
We proceed like in \cite{KestenSpitzer}  by decomposing the set of all possible
indices into small slices where sharp estimates can be made. 
Define, in the slice $[\tau\ell\sqrt{n},\tau(\ell+1)\sqrt{n})$, an
average occupation time by
$$
T_j(\tau,\ell,n):=\frac {n_j}n\, T_{n_j}^{(j)}(\tau\ell,\tau(\ell+1)).
$$
Set also
$$U(\tau,M,n):=\sum_{j=1}^k\theta_j n^{-\frac{3}{2}}
   \sum_{|x|\ge M\tau\sqrt n}N_{n_j}^{(j)}(x)^2,$$
$$V(\tau,M,n):=\sum_{j=1}^k\frac{\theta_j}{\tau}\sum_{-M\leq
\ell<M}(T_j(\tau,\ell,n))^2  + \sum_{1\le j<j'\le k} \theta_{j,j'}
\frac{n_jn_{j'}}{n^2}\, T_{n_j,n_{j'}}^{(j,j')}
 ,$$
and
\begin{eqnarray*}
A(\tau, M,n)&:=&\sum_{j=1}^k\theta_j n^{-\frac{3}{2}}\sum_{x\in\ZZ} N_{n_j}^{(j)}(x)^2
+ \sum_{1\le j<j'\le k} \theta_{j,j'}\frac{n_jn_{j'}}{n^2}\, T_{n_j,n_{j'}}^{(j,j')}
 -U(\tau,M,n)-V(\tau,M,n)\\
& =&\sum_{j=1}^k\theta_j n^{-\frac{3}{2}}\sum_{-M\leq \ell<M}\sum_{a(\ell,n)\leq x<a(\ell+1,n)}
\left(N^{(j)}_{n_j}(x)^2-\frac{n^2\times(T_j(\tau,\ell,n))^2}{(\tau\sqrt{n})^2}\right)+o(1).
\end{eqnarray*}
It follows from computations in \cite{KestenSpitzer} (see in particular Lemmas 1, 2 and 3) that $A(\tau, M,n)$ converges in
probability to zero as $M\tau^{3/2}\to 0$. Moreover, 
$$\PP(U(\tau,M,n)\neq 0) \le \PP\left(\exists j\le k\ :\ \sup_{m\le n_j} |S^{(j)}_m|> M\tau \sqrt n\right),$$
and it is well known that the right hand term goes to $0$, as $M\tau\to \infty$, and $n_j/n\to T_j$, for all $j\le k$.

\noindent Now, Lemma \ref{pre2} shows that $V(\tau,M,n)$ converges
in law to
$$\sum_{j=1}^k\frac{\theta_j}{\tau}\sum_{-M\leq  \ell<M}\Big(\int_{\ell\tau}^{(\ell+1)\tau}
   L_{T_j}^{(j)}(x)dx\Big)^2
   +\sum_{1\le j<j'\le k}\theta_{j,j'}\int_{\mathbb R}L_{T_j}^{(j)}(x)
     L_{T_{j'}}^{(j')}(x) \, dx.$$
But the map $x\mapsto L^{(j)}_{t}(x)$ being a.s. continuous with compact support, this last sum
converges, as $\tau\rightarrow 0$ and $M\tau\rightarrow\infty$, to
$$\sum_{j=1}^k{\theta_j}\int_{\mathbb R} L_{T_j}^{(j)}(x)^2\,   dx
  +\sum_{1\le j<j'\le k}\theta_{j,j'}\int_{\mathbb R}L_{T_j}^{(j)}(x)
     L_{T_{j'}}^{(j')}(x) \, dx.$$
The proposition follows.  
\hfill $\square$

\end{document}